\documentclass{article}

\usepackage{xcolor}

\usepackage{verbatim}

\usepackage{amsfonts}

\usepackage{amsthm}

\usepackage{amssymb}

\usepackage{amsmath}

\usepackage{mathabx}

\usepackage{enumerate}

\usepackage[all]{xy}

\usepackage{graphicx}

\usepackage{centernot}

\usepackage{stmaryrd}

\usepackage{mathrsfs}

\usepackage[pagebackref,colorlinks,linkcolor=blue,citecolor=blue,urlcolor=blue,hypertexnames=true]{hyperref}

\usepackage[pagebackref,hypertexnames=true]{hyperref}

\newcommand{\N}{\mathbb{N}}

\newcommand{\Z}{\mathbb{Z}}

\newcommand{\R}{\mathbb{R}}

\newcommand{\Q}{\mathbb{Q}}

\newcommand{\C}{\mathbb{C}}

\newcommand{\Hawaii}{Hawai\kern.05em`\kern.05em\relax i}

\newcommand{\tor}{\text{t}}

\theoremstyle{plain}
\newtheorem{theorem}{Theorem}[section]
\newtheorem{lemma}[theorem]{Lemma}
\newtheorem{corollary}[theorem]{Corollary}
\newtheorem{proposition}[theorem]{Proposition}

\newtheorem{definition-theorem}[theorem]{Definition / Theorem}

\newtheorem*{conjecture*}{Conjecture}
\newtheorem*{theorem*}{Theorem}

\theoremstyle{definition}
\newtheorem{definition}[theorem]{Definition}
\newtheorem{example}[theorem]{Example}
\newtheorem{examples}[theorem]{Examples}

\theoremstyle{remark}
\newtheorem{remark}[theorem]{Remark}

\newtheorem*{example*}{Example}  
\newtheorem*{remark*}{Remark}

\begin{document}
\title{A secondary pairing between $K$-theory and $K$-homology, relative eta invariants, and zeta maps}
\author{Rufus Willett}

\maketitle

\begin{abstract}
The $K$-homology groups of a $C^*$-algebra are receptacles for information from topology, operator algebra theory, and representation theory.  For applications, one often wants to know if two $K$-homology classes are the same: the simplest way to deduce this is typically via the `primary' pairing between $K$-homology and the dual theory ($K$-theory).  However, this pairing will typically miss some information: for example, it cannot detect torsion elements of $K$-homology.

In this paper, we introduce a `secondary' pairing between subgroups of $K$-homology and $K$-theory that takes values in $\Q/\Z$.  In good cases we show that this pairing will detect all the classes in $K$-homology that are missed by the primary pairing.  We then relate our secondary pairing to the relative eta invariants of Atiyah-Patodi-Singer, and to the Thomsen exact sequence and zeta maps from $C^*$-algebra classification theory.
\end{abstract}

\tableofcontents

\section{Introduction}



For a (separable) $C^*$-algebra $A$ there are two $K$-homology groups $K^0(A)$ and $K^1(A)$ up to isomorphism; these are topological abelian groups that form an important `homological' invariant of the $C^*$-algebra.  The topology on $K^i(A)$ is not always Hausdorff\footnote{It is Hausdorff if $K^i(A)$ is finitely generated, which will happen in many interesting examples such as if $A=C(X)$ for $X$ a finite CW complex.} and for many applications it is really the maximal Hausdorff quotient that is important.  Following R\o{}rdam \cite{Rordam:1995aa} and Dadarlat \cite{Dadarlat:2005aa}, we will denote the maximal Hausdorff quotient group of $K^i(A)$ by $KL^i(A)$, i.e.\ $KL^i(A)$ is the quotient of $K^i(A)$ by the closure of the trivial subgroup.  

Cycles for $K(L)$-homology arise in a variety of applications in topology, geometry, operator algebra theory, and representation theory.  One is often interested in knowing when two given cycles define the same class, or equivalently, when a given cycle defines the zero class.  Now, the $K$-homology group $KL^i(A)$ pairs with the $K$-theory group $K_i(A)$, and the latter is much easier to compute.  A powerful general method to tell if a cycle $x$ for the $K$-homology group $KL^i(A)$ defines the zero class is to look at the map  
\begin{equation}\label{x*}
x_*:K_i(A)\to \Z
\end{equation}
induced by its pairing with $K$-theory.  Computing the map $x_*$ is often possible in interesting cases, for example using index theory.  However, we cannot hope to get information this way if $x\in KL^i(A)$ is in the kernel $kl^i(A)$ of the canonical pairing map
\begin{equation}\label{intro gamma}
\gamma:KL^i(A)\to \text{Hom}(K_i(A),\Z),\quad x\mapsto x_*.
\end{equation}


The main goal of this paper is to define a secondary pairing between $kl^i(A)$ and the torsion subgroup $\tor K_{1-i}(A)$ of the $K$-theory group $K_{1-i}(A)$ in the opposite degree, that takes values in $\Q/\Z$.  Here is our basic result.

\begin{theorem}\label{sec pair intro}
Let $A$ be a separable $C^*$-algebra.  Then for $i\in \{0,1\}$ there is a pairing map 
$$
\delta:kl^i(A)\to \text{Hom}(\tor K_{1-i}(A),\Q/\Z),
$$
natural in $A$.  

If $A$ satisfies the UCT of Rosenberg and Schochet \cite{Rosenberg:1987bh}, then $\delta$ is a homeomorphic isomorphism when $\text{Hom}(\tor K_{1-i}(A),\Q/\Z)$ is equipped with the topology of pointwise convergence.  In particular, a cycle $x$ for $KL^i(A)$ defines the zero class if and only if $\gamma(x)$ (see line \eqref{intro gamma}) and $\delta(x)$ are both zero.
\end{theorem}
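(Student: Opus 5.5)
The plan is to define $\delta$ via the Bockstein, or coefficient, sequence in $K$-theory, paired against $K$-homology through Kasparov products. For each $n\geq 1$, let $\mathbb{O}_n$ denote a Cuntz algebra, or any separable nuclear UCT algebra with $K_0=\Z/n$ and $K_1=0$. One has the mapping cone exact sequence
\[
K_{1-i}(A)\xrightarrow{\ n\ } K_{1-i}(A)\to K_{1-i}(A;\Z/n)\to K_{i}(A)\xrightarrow{\ n\ } K_i(A),
\]
where $K_*(A;\Z/n)=K_*(A\otimes \mathbb{O}_n)$, up to the conventional degree shift.

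A torsion element $y\in \tor K_{1-i}(A)$ with $ny=0$ lifts to a class $\tilde y\in K_{i}(A;\Z/n)$ under the boundary map. Equivalently, it is represented by a unitary or projection over $A$ together with a trivialisation of $n$ copies, i.e.\ a class in $K_i$ of the mapping cone of the unital map $\C\to M_n(\C)$ tensored with $A$. Given $x\in kl^i(A)$, pair $x$ with $\tilde y$ using the Kasparov product to get an element of $K_0(C_n)$, where $C_n$ is the mapping cone with $K_0=\Z/n$. This yields $\langle x,\tilde y\rangle\in \Z/n\subseteq \Q/\Z$ via $k\mapsto k/n$. I would then define $\delta(x)(y):=\langle x,\tilde y\rangle$.

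Well-definedness has three parts, checked in this order.

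First, independence of the lift. Two lifts differ by the image of some $z\in K_i(A)$. Its pairing with $x$ is the image of $x_*(z)\in\Z$ in $\Z/n$, which vanishes exactly because $\gamma(x)=0$. This is precisely where the hypothesis $x\in kl^i(A)$ enters.

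Second, independence of $n$. Use the maps $C_n\to C_{nm}$ compatible with $\Z/n\hookrightarrow\Z/nm$, $1/n\mapsto m/nm$, and naturality of the Kasparov product.

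Third, descent to $KL$. The resulting map on $K^i(A)$ is continuous for the pointwise topology on the target, since pairings with fixed $K$-theory classes are continuous. Because $\Q/\Z$ is discrete, the map kills the closure of zero. Additivity in $x$ and in $y$ is standard, and naturality in $A$ follows from associativity and functoriality of Kasparov products.

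For the UCT statement, I would compare with the UCT short exact sequence
\[
0\to \mathrm{Ext}(K_{1-i}(A),\Z)\to K^i(A)\to \mathrm{Hom}(K_i(A),\Z)\to 0 .
\]
By Dadarlat and R\o{}rdam, $KL^i(A)$ is obtained by replacing Ext with its Hausdorff quotient, the pure extension quotient $\mathrm{Pext}$. Hence $kl^i(A)\cong \mathrm{Ext}(K_{1-i}(A),\Z)/\mathrm{Pext}$.

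Write $G=K_{1-i}(A)$, a countable group. There is a standard identification $\mathrm{Ext}(G,\Z)/\mathrm{Pext}(G,\Z)\cong \mathrm{Hom}(\tor G,\Q/\Z)$, via the following chain:
\begin{itemize}
\item apply $\mathrm{Hom}(G,-)$ to $0\to\Z\to\Q\to\Q/\Z\to 0$, obtaining $\mathrm{Ext}(G,\Z)\cong \mathrm{coker}(\mathrm{Hom}(G,\Q)\to\mathrm{Hom}(G,\Q/\Z))$;
\item restrict to $\tor G$;
\item use that $\mathrm{Hom}(\tor G,\Q/\Z)$ is profinite, and that the kernel of the restriction is the closure of zero.
\end{itemize}
One must then check that $\delta$ agrees with this algebraic map, up to sign. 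This reduces by naturality to the case of a Bockstein calculation: realise $x$ as an extension, so that $\delta(x)(y)$ becomes the class of the boundary-map image. I expect this compatibility check, i.e.\ identifying the Kasparov pairing with the mod-$n$ cone class with the algebraic Ext-to-Hom map, to be the main obstacle. My first attempt would be to use that both sides are natural transformations of functors and to verify on $A$ a suspension of $C_n$, relying on a UCT-model argument, or on geometric resolutions, to reduce to that case.

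Homeomorphism then follows because both topologies are the pointwise ones. A continuous bijection from $kl^i(A)$, with its quotient topology, to $\mathrm{Hom}(\tor G,\Q/\Z)$ is open because the Pext-quotient topology on $KL$ coincides with the profinite topology, as shown by Dadarlat's description of the topology on $KL$.

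The final assertion follows immediately. If $\gamma(x)=0$ then $x\in kl^i(A)$, and injectivity of $\delta$ gives $x=0$.
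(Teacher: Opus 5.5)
\textbf{Overall.} The definition of $\delta$ and its well-definedness are sound, and the algebra is correct. The UCT half has a genuine gap: you never show that your $\delta$ is the isomorphism you construct.

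\textbf{What is fine.} Your definition of $\delta$ via $\Z/n$-coefficients is correct. So are your checks of independence of the lift (this is exactly where $\gamma(x)=0$ enters), independence of $n$, descent to $KL$, and naturality. Since $C(\iota_Q)$ is the direct limit over $J$ of the $C(\iota_n)$, this is equivalent to the paper's $\Q/\Z$-coefficient definition. The algebra $kl^i(A)\cong \text{Ext}(G,\Z)/\text{Pext}(G,\Z)\cong \text{Hom}(\tor G,\Q/\Z)$ for countable $G=K_{1-i}(A)$ is also correct.

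\textbf{The gap.} It is the step you flag and then leave as a plan. You never show that \emph{your} $\delta$ is this isomorphism, or even that it differs from it by an automorphism. Without that comparison you only know that $kl^i(A)$ and $\text{Hom}(\tor K_{1-i}(A),\Q/\Z)$ are abstractly isomorphic. That gives neither injectivity nor surjectivity of $\delta$. So the UCT half of the statement is not yet proved, including the criterion `$x=0$ iff $\gamma(x)=0$ and $\delta(x)=0$'.

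\textbf{How to close it.} Your suggested reduction can be made to work. For $i=0$ and $ny=0$, the UCT for $I_n$ gives $\beta\in KK(I_n,A)$ carrying a generator $g$ of $K_1(I_n)\cong\Z/n$ to $y$. KK-naturality of both $\delta$ and the UCT map $k^0(\cdot)\to\text{Ext}(K_1(\cdot),\Z)$ then reduces the comparison to $A=I_n$, where $k^0(I_n)=K^0(I_n)\cong\Z/n$. You must then actually compute $\delta$ and the Ext-map on a generator of $K^0(I_n)$ against $g$. That computation is exactly the content you have deferred. Alternatively, Proposition~\ref{d=d2} together with Remark~\ref{ext tor rem} carries out this comparison directly. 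It tensors the BDF extension representing $x$ with $0\to SQ\to C(\iota_Q)\to\C\to 0$ and chases the resulting $3\times 3$ diagram.

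\textbf{The paper's route.} The paper's proof of the UCT statement does not use the $\text{Ext}/\text{Pext}$ picture at all, and your $\Z/n$-coefficient definition fits its route well. By construction, $\delta(x)$ is read off from the $\Z/n$-coefficient components of $\underline{\gamma}(x)\in\text{Hom}_\Lambda(\underline{K}(A),\underline{K}(\C))$. So no comparison is needed once one uses Dadarlat's theorem that $\underline{\gamma}:KL^0(A)\to\text{Hom}_\Lambda(\underline{K}(A),\underline{K}(\C))$ is a topological isomorphism for $A$ in the UCT class. What remains is purely algebraic:
\begin{enumerate}
\item Lemma~\ref{annoying algebra} identifies $\ker f_\Lambda$ with compatible families $\psi_n:\text{Tor}(\Z/n,K_1(A))\to\Z/n$.
\item A direct limit over $J$ turns these into $\text{Hom}(\tor K_1(A),\Q/\Z)$, giving the diagram of Theorem~\ref{totk}.
\item The five lemma in Corollary~\ref{uct pair} finishes. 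The topological statement is automatic, since all the relevant topologies are pointwise.
\end{enumerate}
Your route replaces this $\Lambda$-module bookkeeping with the classical $\text{Ext}/\text{Pext}$ description. It pays for this with the comparison step above. It also still relies on Dadarlat's results on the topology (Pext as the closure of zero, and a profinite description of $kl^i(A)$) for the homeomorphism claim.
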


There are analogous very classical constructions in homology theory: for example, the linking numbers relating torsion classes in manifold topology in the sense discussed in \cite[page 523]{Kervaire:1963aa}\footnote{The definition of linking number implicitly invokes Poincar\'{e} duality to define a pairing of the torsion subgroup of homology with itself, rather than a pairing between homology and cohomology.} and the discussion of torsion invariants in \cite[Section 2]{Morgan:1974aa}.  


We treat this pairing in three different ways, which we show are equivalent: the most complicated (but probably also most powerful) relates the secondary pairing to the category of Bockstein operations and the universal multicoefficient theorem of Dadarlat-Loring \cite{Dadarlat:1996aa}: see Theorem \ref{totk} and Corollary \ref{uct pair} below for the most general statements.

We should note that parts of Theorem \ref{sec pair intro} are very classical, appearing implicitly in the work of Brown \cite{Brown:1984rx} and Brown-Douglas-Fillmore \cite{Brown:1973aa}.  For example, the statement that $\delta$ is a homeomorphic isomorphism in the presence of the UCT is the same as saying that $kl^i(A)$ identifies with the Pontrjagin dual of $\tor K_{1-i}(A)$, which is in turn related to the discussion on \cite[pages 62-3]{Brown:1984rx}. Moreover, the fact that $kl^i(A)$ is the maximal compact subgroup of $K^i(A)$ (which is a straightforward corollary of the statement about Pontrjagin duals) essentially appears on \cite[page 977]{Brown:1973aa}.  Our results are more general and fully developed than these, and can be thought of as providing full details, and a fuller context, to these earlier hints.

\subsection{Applications and examples}

We give some cases where the secondary pairing from Theorem \ref{sec pair intro} can be explicitly computed.  The first main example comes from geometry: the relative eta invariant of Atiyah-Patodi-Singer \cite{Atiyah:1975aa}.  Here is a sample theorem, based closely on the work of Antonini-Azzali-Skandalis \cite{Antonini:2014aa}.  We will not state the definitions here, but suffice to say that the relative eta invariant is a measure of spectral asymmetry of a self-adjoint elliptic differential operator defined using a sort of zeta function regularization; it is well-studied in differential topology and mathematical physics.

The following theorem refines and generalizes the results of \cite[Section 6]{Willett:2024aa}, which was one of our main motivations for this paper.

\begin{theorem}\label{intro eta}
Let $M$ be a closed manifold.  Let $D$ be a Dirac-type operator on $M$ defining a torsion class $[D]$ in the $K$-homology group $KL^1(C(M))$, whence is in $kl^1(C(M))$.  Let $V$ and $W$ be flat bundles over $M$ of the same dimension\footnote{These assumptions imply that the class $[V]-[W]$ in the $K$-theory group $K_0(C(M))$ is torsion.}.  Then
$$
\delta[D]([V]-[W])=\rho_{V,W}(D)
$$
where the right hand side is the relative eta invariant of Atiyah-Patodi-Singer.
\end{theorem}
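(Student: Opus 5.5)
Since the definition of $\delta$ has not yet been given at this point, I will use the most concrete of the three equivalent descriptions promised in the introduction. The plan is to compute $\delta[D]([V]-[W])$ for a torsion class $[V]-[W]$ of order $n$ via a mod-$n$ (Bockstein) description, and then match the resulting quantity with the Atiyah--Patodi--Singer formula for $\rho_{V,W}(D)$ modulo $\Z$. I expect the description to run as follows. A class $y\in\tor K_0(C(M))$ with $ny=0$ lifts, through the Bockstein sequence for $K_*(\,\cdot\,;\Z/n)$, to a class $\tilde y\in K_1(C(M);\Z/n)$, equivalently to $K_0$ of $C(M)\otimes C_0$ of the mapping cone of $\Z\xrightarrow{n}\Z$. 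The class $x=[D]\in kl^1$ pairs with $\tilde y$ to give an element of $\frac1n\Z/\Z$. This is well defined because $\gamma(x)=0$, which kills the ambiguity in the choice of lift. I will take this as the working definition and assume, as the paper indicates, that it agrees with the official $\delta$.

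\textbf{Step 1: build the lift.} With $V,W$ flat of rank $k$ and $n([V]-[W])=0$, stabilize and choose an isomorphism $\phi:nV\oplus \C^m\cong nW\oplus\C^m$. The triple $(V,W,\phi)$, or equivalently the data $(nV,nW,\phi)$ together with the flat structures, is exactly a $\Z/n$-cycle lifting $[V]-[W]$. This is the Atiyah--Patodi--Singer viewpoint, and the flat connections supply the explicit path data.

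\textbf{Step 2: compute the pairing analytically.} Form the twisted operators $D_V,D_W$ using the flat connections. Following Antonini--Azzali--Skandalis, realise the pairing of $[D]$ with the $\Z/n$-class as a $\Z/n$-index. On the cylinder $[0,1]\times M$, join $nD_V\oplus D_{\C^m}$ to $nD_W\oplus D_{\C^m}$ through the connection path induced by $\phi$. The pairing is $\frac1n$ times the spectral flow along this path, modulo $\Z$.

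\textbf{Step 3: compare with the eta invariant.} Apply the variation formula $\frac{d}{dt}\xi = $ local term $+$ spectral flow contributions, where $\xi=(\eta+h)/2$. The spectral flow along the path equals $\xi$(end) $-\xi$(start) minus the integral of a local Chern--Simons term. The integrated local term is the transgression between two connections on the same bundle, one of which is flat up to the trivial summand. This should give
$$\frac1n\mathrm{sf}\equiv \xi(D_V)-\xi(D_W)-\tfrac1n\textstyle\int_M \mathrm{CS}\pmod \Z.$$
The aim is then to show that the Chern--Simons term is an integer multiple, or zero modulo the appropriate terms. This uses two facts: the Chern character of a torsion class vanishes rationally, and $[D]$ itself being torsion makes its homology Chern character zero. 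Together these make the local index density paired with $\mathrm{CS}$ vanish. The result is $\delta[D]([V]-[W])=\xi(D_V)-\xi(D_W)=\rho_{V,W}(D)$ in $\R/\Z$, and hence in $\Q/\Z$.

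\textbf{Main obstacle.} The hardest step is Step 3. There I must control the local Chern--Simons contribution and keep track of sign and normalisation conventions ($\eta$ versus $\xi$, degree shifts) so that the identification with the abstract $\delta$ is exact rather than correct only up to sign. I would first try using the hypothesis that $[D]$ is torsion in $KL^1$. Its Chern character in $H_{odd}(M;\Q)$ then vanishes, so the local density of $D$ integrates trivially against the closed forms produced by the transgression.
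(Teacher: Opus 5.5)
Your argument is correct in outline, but it takes a genuinely different route from the paper's. The paper does no analysis. It uses the \emph{canonical} lift of $[V]-[W]$ to $K^1(M;\R/\Z)$ supplied by the flat structures: the Antonini--Azzali--Skandalis relative classes $[\pi_V]-[\pi_W]$, built with a II$_1$-factor. It notes via Remark \ref{rz rem} that $\delta[D]$ can be computed with $\R/\Z$-coefficients and any lift. It then quotes the AAS reformulation of APS III, $\langle [D],[\pi_V]-[\pi_W]\rangle=\rho_{V,W}(D)$, which holds for \emph{every} $[D]$. Torsion of $[D]$ enters only to make $\delta[D]$ defined and independent of the lift.

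You instead take a non-canonical, purely topological $\Z/n$-lift (the stable isomorphism $\phi$), write the pairing as $\frac1n\mathrm{sf}$, and run the APS variation formula. Torsion of $[D]$ is then used a second time, to kill $\frac1n\int_M\omega_D\wedge\mathrm{CS}$. That term is in effect the pairing of $[D]$ with the difference between your lift and the flat one, a class coming from $K^1(M)\otimes\R$. Your route avoids II$_1$-factors and makes the mechanism explicit. The cost is two inputs that you assert but do not prove:
\begin{enumerate}[(i)]
\item The identification of the Kasparov product $K_1(M)\times K^1(M;\Z/n)\to\Z/n\cong\tfrac1n\Z/\Z$ with $\frac1n\mathrm{sf}$ modulo $\Z$. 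This is a $\Z/n$-analogue of the AAS $\tau$-spectral-flow description, and you can obtain it by embedding $M_n(\C)$ unitally in a II$_1$-factor as in Remark \ref{rz rem}.
\item The variation formula with local form $\omega_D$ closed and Poincar\'e dual to $\mathrm{ch}[D]$. For a general Dirac-type operator this needs a preliminary deformation of $D$ to one built from a Clifford connection. That deformation is harmless: the local variations of $\xi(D_V)$ and $\xi(D_W)$ agree for flat $V,W$ of equal rank, so $\rho_{V,W}(D)$ is unchanged modulo $\Z$, and $[D]$ is unchanged.
\end{enumerate}

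One correction to Step 3. The Chern--Simons term vanishes for two reasons together. First, \emph{both} endpoint connections $\phi^*(n\nabla^V\oplus\nabla^0)$ and $n\nabla^W\oplus\nabla^0$ are flat ($\nabla^0$ the trivial connection on $\C^m$), so $\mathrm{CS}$ is closed. Second, $\mathrm{ch}[D]=0$ in $H_{\mathrm{odd}}(M;\Q)$. The rational vanishing of $\mathrm{ch}([V]-[W])$ plays no role. The class of $\mathrm{CS}$ between two flat connections is a secondary invariant and is typically nonzero; on $S^1$ it records the difference of holonomies. The term then vanishes exactly, not just modulo $\Z$.

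Finally, your $\xi=(\eta+h)/2$ differs from the paper's $(\eta-h)/2$ normalization in line \eqref{rel eta} by an integer. So only the overall sign convention still needs tracking.
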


This implies, for example, that if a Dirac-type operator $D$ on a closed manifold $M$ defines a torsion class $[D]$ in $K^1(C(M))$, then for any flat bundles $V$ and $W$ on $M$, the relative eta invariant $\rho_{V,W}(D)$ depends only on the isomorphism classes of $V$ and $W$ as \emph{topological} vector bundles.  This should be contrasted with the case when $[D]$ is not torsion, in which case $\rho_{V,W}(D)$ depends on the isomorphism classes of $V$ and $W$ as \emph{flat} vector bundles; the latter is much finer information.  We give examples highlighting this distinction, and other examples where the pairing is non-trivial in this context.  We also prove a version of the above theorem for finite CW complexes using the Baum-Douglas model of $K$-homology; this is motivated by work in progress around the Baum-Connes conjecture and flat bundles.

The other main example concerns finite-dimensional representations.  Again, we do not state it in maximal generality, but just give a flavor that is still strong enough to give some non-trivial applications.

\begin{theorem}\label{intro zeta}
Let $A$ be a separable unital $C^*$-algebra, and let $\pi,\sigma:A\to M_n(\C)$ be representations that induce the same map $K_0(A)\to \Z$, so there is a well-defined class $[\pi]-[\sigma]$ in $kl^0(A)$.  Assume moreover that $u\in A$ is a unitary with finite order in the abelianization $U(A)_{ab}$ of the unitary group of $A$\footnote{This implies that $[u]$ is a torsion class in $K_1(A)$.}.  Then 
$$
\delta([\pi]-[\sigma])[u]=\frac{1}{2\pi i} \log\det(\pi(u)\sigma(u^*)).
$$
\end{theorem}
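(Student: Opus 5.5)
We have not yet seen the definition of $\delta$. The plan below therefore uses the most natural model for it: the Bockstein/boundary description via $K$-theory with $\Z/m$ coefficients. Take $x\in kl^0(A)$ and a torsion class $y\in \tor K_1(A)$ with $my=0$. The Bockstein sequence
$$K_0(A)\xrightarrow{m} K_0(A)\to K_0(A;\Z/m)\xrightarrow{\beta} K_1(A)\xrightarrow{m}K_1(A)$$
gives a lift $\tilde y\in K_0(A;\Z/m)$ with $\beta(\tilde y)=y$. Pairing $x$ with $\tilde y$ gives an element of $\Z/m$, and dividing by $m$ gives $\delta(x)(y)\in \frac1m\Z/\Z\subseteq \Q/\Z$. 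This is independent of the lift precisely because $\gamma(x)=0$.

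\textbf{Step 1 (explicit lift).} Realize $K_0(A;\Z/m)$ as $K_1(A\otimes C_m)$, or as $K_0$ of the mapping cone of the unital map $\C\to M_m(\C)$, tensored with $A$. Let $k$ be the order of $u$ in $U(A)_{ab}$, and choose $m=k$. Then $u^{k}$ is a product of commutators in some $U(A)$, or in $U(M_N(A))$ after stabilizing, which is harmless. Write $u^k=\prod_j [a_j,b_j]$. Set $U=u\oplus\cdots\oplus u$ ($k$ copies) and let $S$ be the cyclic shift matrix, which satisfies $S^k=1$. Then $(U S)^k$ is $\mathrm{diag}(u^k,\dots,u^k)$, a product of commutators. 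A product of commutators is connected to $1$ by an explicit path, namely $\prod_j [a_j(t),b_j]$ after deforming, or, more simply, via commutators in $U(M_2(A))$, which lie in the connected component of the identity. I will use this path, together with the root-of-unity structure, to build an explicit cycle $\tilde y$ for $K_0(A;\Z/k)$ whose Bockstein image is $[u]$.

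\textbf{Step 2 (pairing).} Pushing $\tilde y$ forward along $\pi$ and $\sigma$ lands in $K_0(M_n(\C);\Z/k)$, and by naturality of the pairing,
$$\langle[\pi]-[\sigma],\tilde y\rangle=\pi_*\tilde y-\sigma_*\tilde y \in K_0(\C;\Z/k)=\Z/k.$$
The task is thus to compute the class of a unitary $v\in U_n(\C)$ with $v^k$ equal to a product of commutators. By the explicit path, such a class is determined by the integer $k\cdot\frac{1}{2\pi i}\log\det v \bmod k$. Indeed, $\det v^k=1$ since $\det$ kills commutators, so $\det v$ is a $k$-th root of unity. The Bockstein class is then obtained by computing the winding number of $\det$ along the path from $v^k$ to $1$, using the concatenation with $\det$ of the commutator path, which has trivial winding. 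Subtracting the $\pi$ and $\sigma$ contributions and dividing by $k$ gives
$$\frac{1}{2\pi i}\bigl(\log\det\pi(u)-\log\det\sigma(u)\bigr)=\frac1{2\pi i}\log\det(\pi(u)\sigma(u^*))\bmod \Z.$$
This value is well defined modulo $\Z$ because $\det(\pi(u)\sigma(u^*))$ is a $k$-th root of unity.

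\textbf{Main obstacle.} The hardest part is Step 1: making the identification $K_0(\C;\Z/k)\cong\Z/k$ and the lift $\tilde y$ concrete enough that the computation for $M_n(\C)$ reduces to determinant winding numbers, with the signs and normalization matching whichever definition of $\delta$ the paper actually uses. If the paper's primary definition is instead via extensions, or via the Thomsen sequence, I would first invoke the promised equivalence of the three descriptions and then apply the argument above. An alternative is to use the de la Harpe–Skandalis determinant. Since $\pi$ and $\sigma$ agree on $K_0$, the difference $\mathrm{tr}\circ\pi-\mathrm{tr}\circ\sigma$ vanishes on $K_0$, so the associated determinant $\Delta_{\pi}-\Delta_\sigma$ descends to a homomorphism $U(A)_{ab}\to \R/\Z$. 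Its value on $u$ is exactly the right-hand side. One would then show that this homomorphism agrees with $\delta$ on torsion classes, via the zeta-map and Thomsen-sequence identification.
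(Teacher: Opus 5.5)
Your route differs from the paper's. Your $\Z/k$-coefficient model of $\delta$ (pair with a lift, divide by $k$) agrees with the paper's $\Q/\Z$ definition; this is the last step of the proof of Theorem~\ref{totk}. The paper never builds a lift of $[u]$. It takes an arbitrary lift $x\in K_0(A;\Q/\Z)$ and applies the zeta map $\zeta:K_0(A;\Q/\Z)\to\overline{K}_1^{alg}(A)$. It uses naturality of $\zeta$ and its identification with $\frac{1}{2\pi i}\log\det$ on $M_n(\C)$ (Lemma~\ref{base qz}). It then uses the Thomsen sequence to see that $[\pi]_{alg}-[\sigma]_{alg}$ on $\tor\overline{K}_1^{alg}(A)$ depends only on the image in $\tor K_1(A)$, since $\pi$ and $\sigma$ agree on $K_0(A)$. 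So $\zeta(x)$ may be replaced by $[u]_{alg}$. Your explicit-cycle computation bypasses the zeta map and the Thomsen sequence and is more elementary. However, it genuinely needs $u^k$ to be an honest product of commutators. The paper's argument also gives Theorem~\ref{thom comp}, where $[u]_{alg}$ is only torsion in $\overline{K}_1^{alg}(A)$ and your path is not available. Your closing alternative via $\Delta_\pi-\Delta_\sigma$ is essentially the paper's proof.

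Two repairs are needed. \textbf{First, the lift.} The shift $S$ and $(US)^k$ play no role. In the model $K_0(A;\Z/k)=K_1(A\otimes I_k)$, the Bockstein is evaluation at $1$, so you need a unitary path $w$ in $M_N(A)\otimes M_k$ from $1$ to $u\otimes 1_k$. Take the rotation homotopy from $u\otimes 1_k$ to $u^k\oplus 1$, followed by Whitehead paths $[a,b]\oplus 1\simeq 1$. The fact that drives Step 2, which you only gesture at, is that $\det\pi(w(s))\equiv 1$ along this path. Rotation steps preserve $\det\pi$, Whitehead steps are products of commutators $[\mathrm{diag}(x,1),R_t]$, and $\det\pi(u)^k=1$. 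So after closing $\pi(w)$ up with $v_r\otimes 1_k$, where $v_r$ is a path from $\pi(u)$ to $1$, the class of $\pi_*\tilde y$ depends on $\det\pi(u)$ alone.

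\textbf{Second, the sign.} You assert the sign rather than compute it, and it does not come out as displayed. The paper fixes the normalization in which the loop $t\mapsto e^{2\pi it}$ is $1\in\Q\cong K_1(SQ)$, so a winding-one loop in $SM_k$ is $[1/k]$. Under it, the determinant-one path $s\mapsto e^{2\pi is/k}\exp(-2\pi ise_{11})$ from $1$ to $e^{2\pi i/k}1_k$ represents $-[1/k]$. In general your computation gives $\pi_*\tilde y=-\frac{1}{2\pi i}\log\det\pi(u)$, hence $\delta([\pi]-[\sigma])[u]=-\frac{1}{2\pi i}\log\det(\pi(u)\sigma(u^*))$.

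The paper reaches the displayed sign only through inconsistent normalizations of $\zeta_n$. Formula \eqref{zn def} is not homotopy invariant as written; test it on $s\mapsto e^{2\pi irs}1_n$ with $A=\C$. If the exponential term enters with a plus sign, Lemma~\ref{base qz} holds, but $\zeta$ composed with $\overline{K}_1^{alg}(A)\to K_1(A)$ is minus the Bockstein, contradicting Lemma~\ref{zeta diag}. Flipping the overall sign restores Lemma~\ref{zeta diag} but gives $d\circ\zeta=-\phi$. So your method is sound, but you should fix a convention explicitly and report the sign it actually produces, rather than matching the target formula.
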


The proof goes through a fundamental exact sequence of Thomsen \cite{Thomsen:1995aa}, and the zeta maps introduced independently by Gong-Lin-Niu \cite[Remark 6.6]{Gong:2023aa}, and Carri\'{o}n-Gabe-Schafhauser-Tikuisis-White \cite[Section 3.1]{Carrion:2020aa}.

We give examples coming from group $C^*$-algebras where the pairing above is non-trivial, and gives interesting information.  We hope to develop this and relate it to the Baum-Connes conjecture and Theorem \ref{intro eta} in subsequent work.

\subsection{Outline of the paper}

Section \ref{prelim sec} recalls notation and sets up conventions; mainly this concerns operator $K$-theory, $K$-homology, and $KK$-theory.  Section \ref{gen pair sec} is the heart of the paper, describing our secondary pairing in detail.  It is split into three subsections each giving a different approach to the secondary pairing: Subsections \ref{qz sec} and \ref{ext sec} are relatively elementary (the background needed is forty to fifty years old at this point), while Subsection \ref{mc sec} is more sophisticated, relating the secondary pairing to the Bockstein operations and universal multicoefficient theorem from \cite{Dadarlat:1996aa} (so needs background that is `only' thirty years old).

Sections \ref{eta sec} \ref{tz sec} discuss applications corresponding to Theorems \ref{intro eta} and \ref{intro zeta} respectively.  Section \ref{eta sec} starts with a background discussion leading up to the work of Antonini-Azzali-Skandalis \cite{Antonini:2014aa}.  We then use their work to derive our main theorems.  Section \ref{tz sec} starts with background discussion about the Thomsen exact sequence from \cite{Thomsen:1995aa}, and the zeta map from \cite{Gong:2023aa,Carrion:2020aa}; we then use this to derive a more general version of Theorem \ref{intro zeta} above.  Sections \ref{eta sec} and \ref{tz sec} both contain examples showing the secondary pairings can be non-trivial in the relevant contexts; both are in some sense related to the Baum-Connes conjecture, and we hope to say more about this in future work (and / or we hope others say more in future work).

\subsection{Acknowledgements}

I am grateful for support from the US NSF (DMS 2247968) and Simons foundation (MP-TSM-00002363) during the writing of the paper.  I would like to thank Jos\'{e} Carri\'{o}n, Jamie Gabe, Guihua Gong, Huaxin Lin, Mikkel Munkholm, Chris Schafhauser, and Stuart White for their patience during various illuminating conversations on the $K$-theoretic inputs to $C^*$-classification theory, and Robin Deeley, Hongzhi Liu, and Hang Wang for some discussion on the geometric side.  I would like to thank Runjie Hu for pointing out the reference \cite{Morgan:1974aa}.

\section{Preliminaries and notation}\label{prelim sec}

In this section, we introduce notation and conventions, and recall background.  

We will generally reserve the notation $A$ for an abstract $C^*$-algebra.  For $C^*$-algebras $A$ and $B$, we write $A\otimes B$ for their minimal tensor product.  For a group $\Gamma$, we write $C^*\Gamma$ for the maximal group $C^*$-algebra.  We write $SA:=C_0(0,1)\otimes A$ for the suspension of a $C^*$-algebra.  

We will use $K$-theory, $K$-homology, and $KK$-theory of $C^*$-algebras throughout.  For general background see \cite{Blackadar:1998yq}.  

We write $K_0(A)$, and $K_1(A)$ for the even and odd $K$-theory groups of a $C^*$-algebra $A$, and $K^0(X)$ and $K^1(X)$ for the even and odd $K$-theory groups of a compact Hausdorff space $X$ (so $K^i(X)=K_i(C(X))$ for $i\in \{0,1\}$).  Similarly for $i\in \{0,1\}$, we write $K^i(A)$ for the $K$-homology group of a $C^*$-algebra, and $K_i(X)$ for the $K$-homology of a space.  We write $K_*(A)$ for the ordered pair of groups $(K_0(A),K_1(A))$, and $\text{Hom}(K_*(A),K_*(B))$ for the group of pairs of homomorphisms $(\phi_0,\phi_1)$ with $\phi_i:K_i(A)\to K_i(B)$ a homomorphism; similarly for $K$-homology.

For separable $C^*$-algebras $A$ and $B$, we write $KK(A,B)$ for the Kasparov $KK$-theory group.  This generalizes $K$-theory and $K$-homology in the sense that there are canonical identifications $KK(A,\C)=K^0(A)$ and $KK(\C,A)=K_0(A)$.  We write $KK^1(A,B)$ for $KK(SA,B)$, or equivalently, $KK(A,SB)$.  Addition of indices on $K$-theory, $K$-homology, and $KK$-theory groups should always be understood modulo two, and $KK^0(A,B)$ is the same as $KK(A,B)$.

If $\alpha\in KK^i(A,B)$ and $\beta\in KK^j(B,C)$, then we will write $\beta\circ \alpha$ for their Kasparov product in $KK^{i+j}(A,C)$; this is probably more commonly written ``$\alpha\otimes \beta$'' (note the switch in order!), but it is more useful for us to have notation that mimics composition of morphisms.  Indeed, for many applications it is useful to think of the $KK$-groups as the morphism sets in an additive category with objects $C^*$-algebras and we will often do this.  

If $A$, $B$, and $C$ are $C^*$-algebras there is a canonical map 
$$
KK(A,B)\to KK(A\otimes C,B\otimes C)
$$
denoted $\alpha\mapsto \alpha\otimes \text{id}_C$.  As a special case of the Kasparov product, this induces a map 
$$
KK(A,B)\to \text{Hom}(K_0(A\otimes C),K_0(B\otimes C)).
$$
Similarly, there is a pairing map
\begin{equation}\label{gamma map}
\gamma:KK(A,B)\to \text{Hom}(K_*(A),K_*(B))
\end{equation}
where for $\alpha\in KK(A,B)$ the component of $\gamma(\alpha)$ in $\text{Hom}(K_0(A),K_0(B))$ is given by Kasparov product, and the component of $\gamma(\alpha)$ in $\text{Hom}(K_1(A),K_1(B))$ is given by the Kasparov product with $\alpha\otimes\text{id}_{C_0(0,1)}$ and the identification $K_0(SC)=K_1(C)$ for any $C^*$-algebra $C$.

Specializing to $B=\C$ and making appropriate use of suspensions gives a pairing 
\begin{equation}\label{pair map}
\gamma:K^i(A)\to \text{Hom}(K_i(A),\Z)
\end{equation}
for each $i\in \{0,1\}$; we will also use the following notation for this pairing
$$
K^i(A)\times K_i(A)\to \Z,\quad (x,y)\mapsto \langle x,y\rangle.
$$

There is a canonical (complete, but possibly non-Hausdorff) topology on each $KK(A,B)$ making it into a topological group: see Dadarlat's paper \cite{Dadarlat:2005aa} for the definitive version\footnote{If one just wants the topology on $K$-homology (which will almost always be enough for us), this goes back to Brown-Douglas-Fillmore: see \cite[Section 5]{Brown:1973aa} and \cite[Section 8.5]{Brown:1977qa}.}.  Following \cite[Section 5]{Dadarlat:2005aa}, we define $KL(A,B)$ to be the maximum Hausdorff quotient of $KK(A,B)$, i.e.\ the quotient of $KK(A,B)$ by the closure of zero.  In the special case of $K$-homology, we write $KL^i(A)$ for this maximal Hausdorff quotient; note that the topology on $KK$ specializes to the discrete topology on $KK(\C,A)=K_0(A)$, so we do not get anything new.  

We will equip the group $\text{Hom}(K_*(A),K_*(B))$ with the topology of pointwise convergence.  As the Kasparov product is continuous (see \cite[Theorem 3.5]{Dadarlat:2005aa}) and $\text{Hom}(K_*(A),K_*(B))$ is Hausdorff, the homomorphism $\gamma$ of line \eqref{gamma map} descends to a homomorphism 
$$
\gamma:KL(A,B)\to \text{Hom}(K_*(A),K_*(B))
$$
and similarly the pairing of line \eqref{pair map} induces 
\begin{equation}\label{pair l map}
\gamma:KL^i(A)\to \text{Hom}(K_i(A),\Z).
\end{equation}

If $A$ is a $C^*$-algebra in the so-called \emph{UCT class} of \cite{Rosenberg:1987bh} (equivalently $A$ is $KK$-equivalent to a commutative $C^*$-algebra), then the map $\gamma$ from line \eqref{gamma map} first into a short exact sequence
\begin{equation}\label{uct ses}
0\to \text{Ext}(K_{*+1}(A),K_*(B))\to KK(A,B)\stackrel{\gamma}{\to} \text{Hom}(K_*(A),K_*(B))\to 0,
\end{equation}
for any separable $C^*$-algebra $B$.  Here $\text{Ext}$ denotes the usual bifunctor of homological algebra, and the notation ``$\text{Ext}(K_{*+1}(A),K_*(B))$'' means that we switch degrees, so this group consists of pairs $(e,f)$ with $e\in \text{Ext}(K_0(A),K_1(B))$ and $f\in \text{Ext}(K_1(A),K_0(B))$.  For us, it will be enough to know that the UCT class includes all (separable) commutative $C^*$-algebras, and is closed under $KK$-equivalence, extensions, and (minimal)\footnote{or maximal, but we won't use this.} tensor products: this can either be found in the original paper of Rosenberg-Schochet \cite{Rosenberg:1987bh}, or is a simple application of their methods.

\section{A secondary pairing between $K$-theory and $K$-homology}\label{gen pair sec}

In this section we introduce a secondary pairing between $K$-theory and $K$-homology of a $C^*$-algebra $A$.  Indeed, recall that the `primary' pairing between $K$-theory and $K$-homology consists of homomorphisms
\begin{equation}\label{gamma pair}
\gamma:K^i(A)\to \text{Hom}(K_i(A),\Z)
\end{equation}
for $i\in \{0,1\}$ as in line \eqref{pair map}.  Our secondary pairing will be defined only when the primary pairing vanishes: if we write $k^i(A)$ for the kernel of $\gamma$ and $tK_i(A)$ for the torsion subgroup of $K_i(A)$, then our secondary pairing will be a homomorphism\footnote{We use the ``$\delta$'' as this is the next letter after $\gamma$.}
$$
\delta:k^i(A)\to \text{Hom}(\tor K_{i+1}(A),\Q/\Z).
$$
Our applications concern only the cases that $A$ is a group $C^*$-algebra or the continuous functions on a compact Hausdorff space, but we work in general in this section.

The section is split into three subsections.  In Subsection \ref{qz sec} we give the definition of $\delta$ in terms of $K$-theory with $\Q/\Z$ coefficients; we use this as the main definition as it seems closest to the applications we want to pursue.  In Subsection \ref{ext sec} we give a picture in the spirit of the $\text{Ext}$ picture of $K$-homology; this will not be used in the main body of the paper, and is included mainly for completeness, and as it is perhaps the simplest approach to the pairing.  In Subsection \ref{mc sec}, we relate our secondary pairing to the universal multicoefficient theorem of Dadarlat-Loring \cite{Dadarlat:1996aa}; this can be regarded as a third picture of the pairing, and seems the most useful of the three pictures in terms of relating the secondary pairing to the previous literature.

\subsection{Definition of the pairing using $\Q/\Z$ coefficients}\label{qz sec}

Let $G$ be a countable abelian group and $A$ be a $C^*$-algebra.  Let $j\in \{0,1\}$, and let $B_G$ be any separable $C^*$-algebra satisfying the UCT\footnote{There are many ways to produce such a $B_G$: see for example the proofs of \cite[Theorem 13.2.4]{Rordam:2000mz} or \cite[Corollary 23.10.3]{Blackadar:1998yq}.} equipped with a fixed choice of isomorphism $\phi_B:K_j(B_G)\to G$, and with $K_{j+1}(B_G)=0$.  We can then define a model for the \emph{$K$-theory of $A$ with coefficients in $G$} by 
\begin{equation}\label{k coeff def}
K_i(A;G):=K_{i+j}(A\otimes B_G).
\end{equation}
This does not depend on the choice of $B_G$: indeed, if $C_G$ is another separable UCT $C^*$-algebra equipped with an isomorphism $\phi_C:K_k(C_G)\to G$ and with $K_{k+1}(C_G)=0$, then the UCT short exact sequence of line \eqref{uct ses} (possibly coupled with taking a suspension) gives a unique $KK$-equivalence $\alpha \in KK^{j+k}(B_G,C_G)$ such that the diagram
$$
\xymatrix{ K_j(B_G) \ar[d]^-{\phi_B}_\cong \ar[r]^-\alpha_\cong & K_k(C_G) \ar[d]^-{\phi_C}_\cong \\ G \ar@{=}[r] & G }
$$
commutes; this $\alpha$ induces a $KK$-equivalence $\text{id}_A\otimes \alpha$ and so an isomorphism $K_*(A\otimes B_G)\cong K_*(A\otimes C_G)$ for all\footnote{If $A$ is not separable consider the maps on $K$-theory induced by $\text{id}_{A_i}\otimes \alpha$ as $A_i$ ranges over separable $C^*$-subalgebras of $A$, and take an inductive limit.} $C^*$-algebras $A$.  Note that $A\mapsto K_*(A;G)$ is a functor from the $KK$-category to the category of graded abelian groups: indeed, given $\alpha\in KK(A,C)$, we get $\alpha\otimes 1_{B_G}\in KK(A\otimes B_G,C\otimes B_G)$, and Kasparov product gives a map $K_*(A;G)\to K_*(C;G)$.

Let now $Q$ denote the universal UHF algebra, so $K_0(Q)\cong \Q$ and $K_1(Q)=0$, with the former isomorphism being canonically determined by the unique trace.  Let $\iota_Q:\C\to Q$ denote the unit inclusion, and let
\begin{equation}\label{cone qz}
C(\iota_Q):=\{f\in C_0([0,1],Q)\mid f(0)=0, f(1)\in \C1_Q\}
\end{equation}
denote its mapping cone.   We recall there is a short exact sequence 
\begin{equation}\label{qz ses}
0\to SQ\to C(\iota_Q)\to \C\to 0
\end{equation}
where $SQ$ is identified with $\{f\in C(\iota_Q)\mid f(1)=0\}$, and the map $C(\iota_Q)\to \C$ is $f\mapsto f(1)$.  The $K$-theory six-term exact sequence for this short exact sequence shows that $K_0(C(\iota_Q))=0$ and induces a commutative diagram
\begin{equation}\label{kci}
\xymatrix{ 0 \ar[r] & K_0(\C) \ar[r]^-{\text{exp}} \ar[d]^{\cong} & K_1(SQ) \ar[r] \ar[d]^{\cong} & K_1(C(\iota_Q)) \ar[d]^{\cong} \ar[r] &  0 \\
0 \ar[r] & \Z \ar[r] & \Q \ar[r] & \Q/\Z \ar[r] & 0  }
\end{equation}
where $\text{exp}$ is the usual exponential boundary map taking $[1]$ to the class of the function $[0,1]\to Q$, $t\mapsto e^{2\pi i t}$.  The isomorphism $K_0(\C)\to \Z$ is chosen to be the canonical one determined by the trace, and this choice (plus commutativity and unique divisibility of $\Q$) uniquely determines the other two vertical isomorphisms in line \eqref{kci}.  With these choices of isomorphisms, we may use $SQ$ and $C(\iota_Q)$ to define $K$-theory with coefficients in $\Q$ and $\Q/\Z$: precisely  
\begin{equation}\label{qz coeffs}
K_i(A;\Q/\Z):=K_{i+1}(A\otimes C(\iota_Q)) \quad \text{and}\quad K_i(A;\Q):=K_{i+1}(A\otimes SQ)
\end{equation}
One computes directly that
\begin{equation}\label{q coeffs}
K_i(A;\Q)\cong K_i(A)\otimes \Q
\end{equation}
(write $A\otimes Q$ as a direct limit of matrix algebras over $A$ and use continuity of $K$-theory, or use the K\"{u}nneth formula \cite{Schochet:1982aa}), and moreover that this isomorphism is compatible with the maps on $K_i(A;\Q)$ induced by elements of $KK(A,B)$.

Now, let $A$ be a separable $C^*$-algebra, and let $\alpha\in K^0(A)$ be an element in the kernel $k^0(A)$ of the pairing map $\gamma:K^i(A) \to \text{Hom}(K_i(A),\Z)$.  We then have a commutative diagram of long exact sequences
\begin{equation}\label{a to c}
\xymatrix{ \cdots \ar[r] & K_0(A;\Q) \ar[r] \ar[d] & K_0(A;\Q/\Z) \ar[r] \ar[d] & K_1(A) \ar[d] \ar[r] & K_1(A;\Q) \ar[r] \ar[d] & \cdots \\
\cdots \ar[r] & K_0(\C;\Q) \ar[r] & K_0(\C;\Q/\Z) \ar[r] & K_1(\C) \ar[r] & K_1(\C;\Q) \ar[r] & \cdots }
\end{equation}
where all vertical maps are induced by $\alpha$.  Using the isomorphism in line \eqref{q coeffs} and the fact that $\alpha$ is in $k^0(A)$, the first vertical map is zero.  Moreover, the the map $K_1(A)\to K_1(A;\Q)$ identifies with the map 
$$
K_1(A)\to K_1(A)\otimes \Q,\quad x\mapsto x\otimes 1_\Q,
$$
and thus its kernel is the torsion subgroup $t K_1(A)$.  Computing also the groups on the bottom row, the diagram in line \eqref{a to c} induces a commutative diagram
\begin{equation}\label{qz ver}
\xymatrix{ \cdots \ar[r] & K_0(A;\Q) \ar[r] \ar[d]^0 & K_0(A;\Q/\Z) \ar[r] \ar[d] & t K_1(A) \ar[d] \ar[r] & 0 \\
\cdots \ar[r] & \Q \ar[r] & \Q/\Z \ar[r] & 0 &   }.
\end{equation}
Hence from the first isomorphism theorem, the map $K_0(A;\Q/\Z)\to \Q/\Z$ induced by $\alpha$ induces a homomorphism
$$
\delta(\alpha):t K_1(A)\to \Q/\Z.
$$
Precisely analogously, if $\alpha$ is in $k^1(A)$, then it induces a homomorphism
$$
\delta(\alpha):t K_0(A)\to \Q/\Z.
$$
The assignment $\alpha\mapsto \delta(\alpha)$ is easily seen to be additive, so we get well-defined group homomorphisms
\begin{equation}\label{delta}
\delta: k^i(A)\to \text{Hom}( tK_{i+1}(A), \Q/\Z)
\end{equation}
for $i\in \{0,1\}$.   As the Kasparov product is continuous, it moreover descends to a map\footnote{We could of course adopt notation like ``$\delta^L_i$'' to signify exactly what is meant, but prefer to avoid clutter, and do not believe this will result in any confusion.}
\begin{equation}\label{deltal}
\delta: kl^i(A)\to \text{Hom}( tK_{i+1}(A), \Q/\Z)
\end{equation}
on the maximal Hausdorff quotient $kl^i(A)$ of $k^i(A)$ (equivalently, $kl^i(A)$ is the kernel of the map $\gamma$ in line \eqref{pair l map}).

\begin{definition}\label{sec pair def}
The \emph{secondary pairing} between $K$-homology and $K$-theory is defined to be any of the homomorphisms $\delta$ from lines \eqref{delta} or \eqref{deltal} above.  We will also sometimes (and equivalently) consider the associated $\Z$-bilinear pairing,
$$
k^i(A)\times \tor K_{i+1}(A)\to \Q/\Z,\quad (x,y)\mapsto \langle x,y\rangle_{\text{II}}
$$
and similarly with $kl^i(A)$ replacing $k^i(A)$.
\end{definition}

We record a naturality property of the secondary pairing for future use; it follows directly from associativity of the Kasparov product.

\begin{proposition}\label{nat}
The secondary pairing defined above is natural for maps induced by $KK$-classes: precisely, if $\beta \in KK^j(A,B)$, then $\beta$ induces maps $\beta^*:k^i(B)\to k^{i+j}(A)$ and $\beta_*:\tor K_i(A)\to \tor K_{i+j}(B)$ such that
$$
\langle \beta^*(x),y\rangle_{\text{II}}=\langle x,\beta_*(y)\rangle_{\text{II}}
$$
for all $x \in k^i(B)$ and $y\in \tor K_{i+1}(A)$. \qed
\end{proposition}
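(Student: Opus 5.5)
The plan is to reduce everything to associativity of the Kasparov product together with naturality of the coefficient sequences in diagram \eqref{a to c}. The first task is to define $\beta^*$ and $\beta_*$. For $x\in K^i(B)=KK^i(B,\C)$ put $\beta^*(x):=x\circ\beta\in KK^{i+j}(A,\C)$. For $y\in K_i(A)=KK^i(\C,A)$ put $\beta_*(y):=\beta\circ y\in K_{i+j}(B)$. Both assignments are group homomorphisms. Since $\beta_*$ is a homomorphism, it sends torsion to torsion, so it restricts to a map $\tor K_i(A)\to\tor K_{i+j}(B)$.

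To see that $\beta^*$ preserves $k^*$, take $z\in K_{i+j}(A)$. By associativity,
$$\langle x\circ\beta,z\rangle=(x\circ\beta)\circ z=x\circ(\beta\circ z)=\langle x,\beta_*z\rangle .$$
If $\gamma(x)=0$ this vanishes, so $\beta^*(x)\in k^{i+j}(A)$. Suspensions and Bott periodicity enter here only through the bookkeeping of degrees. I would fix the identifications $KK^1(A,B)=KK(SA,B)=KK(A,SB)$ once and for all, so that the odd-degree pairings are literally compositions in the $KK$-category.

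For the secondary pairing, recall that $\langle x,y\rangle_{\text{II}}$ is computed as follows. Lift $y$ to $\tilde y\in K_{*}(A;\Q/\Z)=K_{*+1}(A\otimes C(\iota_Q))$ through the boundary map of the coefficient sequence coming from \eqref{qz ses}. Then apply $x\otimes\text{id}_{C(\iota_Q)}$ and read off the answer in $K_*(\C;\Q/\Z)\cong\Q/\Z$.

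The coefficient long exact sequence is natural for Kasparov products with $\beta\otimes\text{id}_{C(\iota_Q)}$, because it is induced by the fixed extension \eqref{qz ses} tensored on the left. Concretely, the boundary map is the product with the class in $KK^1(\C,SQ)$ of that extension, and exterior products by this class commute with $\beta\otimes 1$. It follows that $(\beta\otimes\text{id})_*\tilde y$ is a lift of $\beta_*y$. Then
$$(x\otimes\text{id})\circ\bigl((\beta\otimes\text{id})\circ\tilde y\bigr)=\bigl((x\circ\beta)\otimes\text{id}\bigr)\circ\tilde y .$$
This uses associativity together with functoriality of $-\otimes\text{id}_{C(\iota_Q)}$ with respect to the Kasparov product, namely $(x\circ\beta)\otimes 1=(x\otimes1)\circ(\beta\otimes1)$. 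The left side computes $\langle x,\beta_*y\rangle_{\text{II}}$ and the right side computes $\langle\beta^*x,y\rangle_{\text{II}}$. Well-definedness, meaning independence of the lift, was already established in the construction of $\delta$.

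The main obstacle is the sign and degree bookkeeping when $j=1$. Odd-degree $\beta$ involves commuting a suspension past $C(\iota_Q)$, and the coefficient boundary maps pick up signs under the identification $K_{i+1}(A\otimes C(\iota_Q))$. To handle this, I would fix a convention that puts the suspension coordinate of $\beta$ in a consistent tensor slot on both sides. The check then reduces to commutativity of the exterior product, which holds up to the flip isomorphism $S\otimes C(\iota_Q)\cong C(\iota_Q)\otimes S$. Since both sides of the identity are computed with the same convention, any sign appears symmetrically and cancels.
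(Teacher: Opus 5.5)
Your argument is correct and is the unpacked form of the paper's one-line proof, which cites only associativity of the Kasparov product. You use associativity, functoriality of $-\otimes\text{id}_{C(\iota_Q)}$, and naturality of the coefficient sequence to show that $(\beta\otimes\text{id})\circ\tilde y$ lifts $\beta_*y$, so both sides equal $(x\otimes\text{id})\circ(\beta\otimes\text{id})\circ\tilde y$.

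One correction: the map $K_*(A;\Q/\Z)\to K_{*+1}(A)$ you lift through is induced by the evaluation $*$-homomorphism $C(\iota_Q)\to\C$, which is an even class. It is not the boundary class in $KK^1(\C,SQ)$. So it commutes with $\beta\otimes\text{id}$ on the nose even for odd $\beta$, and no sign bookkeeping is needed. Your ``symmetric cancellation'' reasoning would not have rescued a sign appearing only in the lifting step. Also, for $j=1$ the element $y$ should range over $\tor K_{i+j+1}(A)$.
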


\begin{remark}\label{rz rem}
It will be useful for us to also discuss the secondary pairing using $\R/\Z$ coefficients rather than $\Q/\Z$, following Antonini-Azzali-Skandalis \cite{Antonini:2014aa}.  For this, let $B$ be any choice of II$_1$-factor, let $\iota_B:\C\to B$ be the unit inclusion, and let $C(\iota_B)$ be the mapping cone, i.e.\
$$
C(\iota_B):=\{f\in C_0([0,1],B)\mid f(0)=0, f(1)\in \C1_B\}.
$$ 
Let $A$ be a (separable) $C^*$-algebra in the UCT class.  Then Antonini-Azzali-Skandalis \cite[Section 3.4]{Antonini:2014aa} define the \emph{$K$-theory of $A$ with $\R/\Z$-coefficients} to be
$$
K_i(A;\R/\Z):= K_{i+1}(A\otimes C(\iota_B))
$$
and show that this does not depend on the choice of $B$; it is important here that $A$ is in the UCT class, as $B$ (and therefore $C(\iota_B)$) is probably not\footnote{A superficial reason for this is that it is not separable, but that could probably be got around by appropriate uses of direct limits.  A more serious problem is that the functor $A\mapsto K_*(A\otimes B)$ probably does not take short exact sequences of $C^*$-algebras to long exact sequences of $K$-theory groups, which seems to violate any reasonable form of the UCT one could hope to define for non-separable $C^*$-algebras.}.  Note also that $K_i(A;\R/\Z)$ is functorial for morphisms from the $KK$ category.  Indeed, let $A$ and $C$ be $C^*$-algebras in the UCT class, and write $C(\iota_B)$ as an (uncountable) inductive limit of separable $C^*$-algebras $B_i$.  Then given a class $\alpha\in KK(A,B)$, we may form the classes $\alpha\otimes \text{id}_{B_i}\in KK(A\otimes B_i,C\otimes B_i)$, take the maps $K_*(A\otimes B_i)\to K_*(C\otimes B_i)$ these induce on $K$-theory, and then take an inductive limit.

Also following Antonini-Azzali-Skandalis \cite[Section 3.3]{Antonini:2014aa}, we define $K_i(A;\R):=K_i(A\otimes B)$.  This again does not depend on the choice of $B$, and is functorial under morphisms from $KK$.  Moreover, as $A$ satisfies the UCT, the K\"{u}nneth formula gives $K_i(A;\R)\cong K_i(A)\otimes \R$ canonically, and the maps induced by $KK$-elements are compatible with these isomorphisms.

Now, fix a unital (necessarily trace-preserving) embedding $\iota:Q\to B$; such exists and is unique up to unitary equivalence, essentially as projections in $B$ are classified up to Murray-von Neumann equivalence by their traces.  We thus get a commutative diagram
\begin{equation}\label{q to b}
\xymatrix{ 0 \ar[r] & SQ\ar[r] \ar[d] & C(\iota_Q) \ar[d] \ar[r] & \C \ar@{=}[d] \ar[r] & 0 \\
0 \ar[r] & SB\ar[r] & C(\iota_B) \ar[r] & \C \ar[r] & 0 }
\end{equation}
with all vertical maps induced by $\iota$.  Let $\alpha$ be an element of $kl^0(A)$.  Applying the argument that gave rise to line \eqref{qz ver} and the commutative diagram in line \ref{q to b}, we get a commutative diagram
{\tiny $$
\xymatrix{ \cdots \ar[r] & K_0(A)\otimes \Q \ar'[d][dd]^-{0} \ar[rr] \ar[dr] & & K_0(A;\Q/\Z) \ar[dr]\ar'[d][dd] \ar[rr] &&  \tor K_1(A) \ar@{=}[dr] \ar'[d][dd]\ar[r] &0  \\
& \cdots \ar[r] & K_0(A)\otimes \R  \ar[dd]^(.7){0} \ar[rr] & & K_0(A;\R/\Z) \ar[dd] \ar[rr] && \tor K_1(A) \ar[dd]\ar[r] & 0 \\
\cdots  \ar[r]^-{\times n} & \Z \ar[dr] \ar'[r][rr] & &  \Q/\Z \ar'[r][rr] \ar[dr] & & 0 \ar@{=}[dr] &  \\
& \cdots  \ar[r]& \R \ar[rr] & & \R/\Z \ar[rr] &  & 0 &   }.
$$}
with all vertical maps induced by $\alpha$, and diagonal maps induced by the vertical maps in line \eqref{q to b}.  It follows from this commutative diagram that we may define the pairing 
$$
\tor K_1(A)\to \Q/\Z
$$
using the `back face' of this diagram and $\Q/\Z$ coefficients as in Definition \ref{sec pair def} just as well using the `front face' of the diagram and $\R/\Z$ coefficients getting a pairing
$$
\tor K_1(A)\to \R/\Z
$$
(note that this must actually take image in the torsion subgroup $\Q/\Z$ of $\R/\Z$), at least when $A$ satisfies the UCT.
\end{remark}

\subsection{Definition of the pairing using extensions}\label{ext sec}

We now give a second, equivalent, construction of the secondary pairing.  Let $A$ be a separable $C^*$-algebra, and let $\alpha$ be an element of $k^0(A)$ (recall this is the kernel of the natural map $\gamma:K^0(A)\to \text{Hom}(K_0(A),\Z)$).  Using the Brown-Douglas-Fillmore model of $K$-homology (see for example \cite[Chapter 2, and Sections 5.1-2 and 8.4]{Higson:2000bs}), we may represent $\alpha$ as a (semisplit) extension 
\begin{equation}\label{alpha ext}
0\to \mathcal{K}\to E\to SA\to 0
\end{equation}
of the suspension of $A$ by the compact operators $\mathcal{K}$.  Then the map $\gamma(\alpha):K_0(A)\to \Z$ given by the primary pairing of line \eqref{pair map} is precisely the boundary map in the six-term exact sequence
\begin{equation}\label{alpha 6t}
\xymatrix{ K_0(\mathcal{K})\ar[r] & K_0(E) \ar[r] & K_0(SA) \ar[d] \\ K_1(SA) \ar[u] & K_1(E) \ar[l] & K_1(\mathcal{K}) \ar[l] }
\end{equation}
when we identify $K_0(\mathcal{K})=\Z$ and $K_1(SA)=K_0(A)$ (see for example \cite[Exercise 7.7.2]{Higson:2000bs}).  As we are assuming $\alpha\in k^0(A)$, we have $\gamma(\alpha)=0$.  As also $K_1(\mathcal{K})=0$, the six-term exact sequence in line \eqref{alpha 6t} then gives rise to a short exact sequence
\begin{equation}\label{kap al}
0\to \Z\to K_0(E)\to K_1(A)\to 0.
\end{equation}
Now, consider the diagram
\begin{equation}\label{ext dash}
\xymatrix{ 0 \ar[r] & \Z \ar@{=}[d] \ar[r] & \ar@{-->}[d] K_0(E) \ar[r] & K_1(A) \ar[r] & 0 \\
0 \ar[r] & \Z \ar[r] & \Q \ar[r] & \Q/\Z \ar[r] & 0 }.
\end{equation}
As $\Q$ is injective, the central dashed arrow can be filled in (non-uniquely) so that the diagram commutes. The dashed arrow then induces a homomorphism $\widetilde{\delta_2(\alpha)}:K_1(A)\to \Q/\Z$.  The homomorphism $\widetilde{\delta_2(\alpha)}$ depends on the choice of extension $K_0(E)\to \Q$; however, we have the following lemma.

\begin{lemma}\label{d2 wd}
The restriction $\delta_2(\alpha):\tor K_1(A)\to \Q/\Z$ of $\widetilde{\delta_2(\alpha)}$ to the torsion subgroup of $A$ does not depend on the choice of extension $\phi:K_0(E)\to \Q$.
\end{lemma}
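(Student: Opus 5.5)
Take two choices $\phi,\phi':K_0(E)\to\Q$ that both make the diagram in line \eqref{ext dash} commute, and compare them directly. Write $\iota:\Z\to K_0(E)$ and $q:K_0(E)\to K_1(A)$ for the maps in the short exact sequence of line \eqref{kap al}. The difference $\psi:=\phi-\phi'$ vanishes on $\iota(\Z)$, since both maps restrict there to the standard inclusion $\Z\to\Q$. So $\psi$ factors through the cokernel of $\iota$: there is a homomorphism $\bar\psi:K_1(A)\to\Q$ with $\psi=\bar\psi\circ q$.

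Next, identify the induced maps. By construction, $\widetilde{\delta_2(\alpha)}$ computed from $\phi$ is determined by
$$\widetilde{\delta_2(\alpha)}(q(e))=\pi(\phi(e)),$$
where $\pi:\Q\to\Q/\Z$ is the quotient map, and similarly for $\phi'$. The difference of the two induced maps $K_1(A)\to\Q/\Z$ is therefore $\pi\circ\bar\psi$.

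Now restrict to $\tor K_1(A)$. If $y\in K_1(A)$ has $ny=0$ for some $n\geq1$, then $n\bar\psi(y)=0$ in $\Q$. Since $\Q$ is torsion-free, $\bar\psi(y)=0$, so $\pi\circ\bar\psi$ vanishes on $\tor K_1(A)$. Hence the two restrictions to $\tor K_1(A)$ agree, which is the statement of the lemma.

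There is no real obstacle in this argument. The only point needing care is the factorization of $\psi$ through $q$, which uses exactness of line \eqref{kap al} at $K_0(E)$, that is, $\ker q=\iota(\Z)$. This exactness is exactly where the hypothesis $\alpha\in k^0(A)$ enters, because it forces the boundary map in line \eqref{alpha 6t} to vanish.
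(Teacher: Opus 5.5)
Your proposal is correct and follows the paper's own argument: the difference of two lifts vanishes on $\Z$, descends to a homomorphism $K_1(A)\to\Q$, and this homomorphism kills $\tor K_1(A)$ because $\Q$ is torsion-free. One small correction to your closing remark: exactness at $K_0(E)$, i.e.\ $\ker q=\iota(\Z)$, holds for any such extension, and the hypothesis $\alpha\in k^0(A)$ is used instead to make $\iota:\Z\to K_0(E)$ injective, which is what lets the lifting diagram in line \eqref{ext dash} be set up in the first place.
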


\begin{proof}
Let $\phi,\psi:K_0(E)\to \Q$ be two homomorphisms that fit in as the dashed arrow in line \eqref{ext dash} above.  Then the difference $\phi-\psi$ vanishes on the subgroup $\Z$, so descends to a homomorphism $\psi-\phi:K_1(A)\to \Q$.  This map must send the torsion subgroup of $K_1(A)$ to zero, whence the maps induced by $\phi$ and $\psi$ from $\tor K_1(A)$ to $\Q/\Z$ are the same.
\end{proof}

Lemma \ref{d2 wd}, plus the analogous construction starting with a class in $k^1(A)$, thus gives us homomorphisms 
\begin{equation}\label{d2 def}
\delta_2:k^i(A)\to \text{Hom}(\tor K_{i+1}(A),\Q/\Z).
\end{equation}
This is our second definition of the secondary pairing.  

\begin{remark}\label{ext tor rem}
Let us explain the algebra going into the definition of $\delta_2$ in a slightly different way, more in the spirit of the classical UCT; this will help connect to some of the older literature.  Let 
\begin{equation}\label{ext def 1}
\kappa:k^0(A)\to \text{Ext}(K_1(A),\Z)
\end{equation}
be the map sending a class $\alpha$ to the class of the associated extension as in line \eqref{kap al}.  This is a homomorphism (see for example \cite[Lemma 7.6.10]{Higson:2000bs}), and is in fact an isomorphism in the presence of the UCT.  On the other hand, as $\text{Ext}$ is contravariantly functorial in the first variable, there is a `restriction' homomorphism
\begin{equation}\label{ext def 2}
\text{Ext}(K_1(A),\Z)\to \text{Ext}(\tor K_1(A),\Z).
\end{equation}
Finally, note that for any abelian group, we may compute $\text{Ext}(H ,\Z)$ using the injective resolution $0\to \Z\to\Q\to\Q/\Z\to 0$ of $\Z$, which implies that 
$$
\text{Ext}(H,\Z)\cong \frac{\text{Hom}(H,\Q/\Z)}{\text{Im}(\text{Hom}(H,\Q))},
$$
where the right hand side means the quotient of $\text{Hom}(H,\Q/\Z)$ by the image of the natural map $\text{Hom}(H,\Q)\to \text{Hom}(H,\Q/\Z)$ (i.e.\ by the subgroup of homomorphisms that lift to $\Q$).  As $\tor K_1(A)$ is torsion, we thus get an isomorphism 
\begin{equation}\label{ext def 3}
\text{Ext}(\tor K_1(A),\Z)\stackrel{\cong}{\to} \text{Hom}(\tor K_1(A),\Q/\Z).
\end{equation}
We leave it to the reader to check that $\delta_2:k^0(A)\to \text{Hom}(\tor K_1(A),\Q/\Z)$ is the composition of the isomorphisms in lines \eqref{ext def 1}, \eqref{ext def 2}, and \eqref{ext def 3}: this is really just pushing definitions around.  Of course, one can also switch degrees in the above and define the $k^1(A)$ version of $\delta$ similarly.
\end{remark}

We now check that the pairing of line \eqref{d2 def} is the same as the secondary pairing $\delta$ from Definition \ref{sec pair def} above.

\begin{proposition}\label{d=d2}
For $i\in \{0,1\}$, the homomorphisms 
$$
\delta,\delta_2:k^i(A)\to \text{Hom}(\tor K_{i+1}(A),\Q/\Z)
$$
are the same.
\end{proposition}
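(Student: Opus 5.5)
We treat the case $i=0$; the case $i=1$ follows by the same argument after suspending, or by replacing $A$ with $SA$ and using naturality (Proposition \ref{nat}). Fix $\alpha\in k^0(A)$, represented by a semisplit extension $0\to\mathcal{K}\to E\to SA\to 0$ as in line \eqref{alpha ext}, and fix $y\in \tor K_1(A)$ with $ny=0$. The plan is to compute both $\delta(\alpha)(y)$ and $\delta_2(\alpha)(y)$ through the six-term sequences obtained by tensoring this extension with $SQ$, $C(\iota_Q)$ and $\C$. We use the fact that the Kasparov product with $\alpha$ on $K$-theory (with any coefficients) equals the boundary map of the tensored extension $0\to\mathcal{K}\otimes D\to E\otimes D\to SA\otimes D\to 0$ for $D\in\{\C,SQ,C(\iota_Q)\}$. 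These tensored extensions remain exact and semisplit, since the original is semisplit and the minimal tensor product with a nuclear algebra is exact. This is the same fact the paper quotes for $D=\C$ from \cite{Higson:2000bs}, applied with coefficients, and it is compatible with the maps $SQ\to C(\iota_Q)\to\C$ by naturality of boundary maps.

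\textbf{Step 1: the diagram.} Form the $3\times 3$ commutative diagram whose rows are the six-term sequences for the extension tensored with $SQ$, $C(\iota_Q)$, $\C$, and whose columns are the sequences for \eqref{qz ses} tensored with $\mathcal{K}$, $E$, $SA$. The relevant portion is
$$
K_1(E\otimes C(\iota_Q))\to K_1(SA\otimes C(\iota_Q))\xrightarrow{\partial} K_0(\mathcal{K}\otimes C(\iota_Q))
$$
and the analogous rows for $SQ$ and $\C$. The boundary $\partial$ above is the map $K_0(A;\Q/\Z)\to\Q/\Z$ induced by $\alpha$. Note that $K_1(\mathcal{K}\otimes C(\iota_Q))\cong\Q/\Z$ and $K_0(\mathcal{K}\otimes C(\iota_Q))=0$, so we must keep careful track of the parity shift conventions from line \eqref{qz coeffs}.

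\textbf{Step 2: define a lift $\phi$ through the diagram.} With the identifications $K_0(E\otimes SQ)\cong K_0(E)\otimes\Q$ (after the appropriate shift), define $\phi\colon K_0(E)\to\Q$ as the composition
$$
K_0(E)\to K_0(E)\otimes\Q\cong K_1(E\otimes SQ)\to K_1(\mathcal{K}\otimes SQ)\cong\Q.
$$
Here the middle map is produced by the $SQ$-row; the obstacle is that the row maps go $\mathcal{K}\to E$, not backwards. So instead we define $\phi$ abstractly: $K_0(E)\otimes\Q$ contains $\Q$ (tensor \eqref{kap al} with $\Q$, which stays exact), so we extend $\Z\to\Q$ to $K_0(E)\to\Q$ using any splitting of $\Q\to K_0(E)\otimes\Q$. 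This yields $\widetilde{\delta_2(\alpha)}$.

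\textbf{Step 3: diagram chase.} Choose $z\in K_0(A;\Q/\Z)$ mapping to $y$ under the Bockstein $K_0(A;\Q/\Z)\to K_1(A)$. Lift $y$ to $e\in K_0(E)$. Then $ne$ comes from $m\in\Z=K_0(\mathcal{K})$, and by definition $\delta_2(\alpha)(y)=\phi(e)\bmod\Z=m/n\bmod\Z$. It remains to show that $\partial(z)=m/n$ as well. This is a standard chase in the $3\times3$ diagram: anticommutativity of the corner square involving two boundary maps (the Bockstein and the extension boundary) identifies $\partial(z)$ with the image of $z$'s Bockstein under the "boundary of boundary" map, which can be computed from $e$ and $ne=m$ as $m/n$.

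\textbf{Main obstacle.} The hardest step is the sign and corner-commutativity in Step 3. One must check that the two composite boundary maps $K_0(SA\otimes C(\iota_Q))\to K_0(\mathcal{K})$ agree up to a sign that is independent of $\alpha$. Since $\delta$ and $\delta_2$ are both natural, this sign can then be fixed by testing on one example, such as $A=C(S^1)$ with torsion in a Moore-space example. The first thing to try is the known lemma that boundary maps of a commuting square of extensions anticommute.
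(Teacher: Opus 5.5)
Your framework is right (the $3\times3$ diagram of the extension tensored with $0\to SQ\to C(\iota_Q)\to\C\to0$), and so is your computation $\delta_2(\alpha)(y)=[m/n]$ from a lift $e\in K_0(E)$ of $y$ with $ne$ the image of $m\in K_0(\mathcal{K})=\Z$. Step 2 is not even needed for that computation.

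\textbf{The gap is Step 3, which is the whole content of the proposition.} You assert, but never show, that the boundary map $\partial\colon K_0(SA\otimes C(\iota_Q))\to K_1(\mathcal{K}\otimes C(\iota_Q))\cong\Q/\Z$ sends $z$ to $[m/n]$. This is the map computing $\delta(\alpha)$, not the one into $K_0(\mathcal{K}\otimes C(\iota_Q))=0$ that you wrote in Step 1. The mechanism you propose cannot give this identity, for three reasons.
\begin{enumerate}
\item In the paper's model, the map $K_0(A;\Q/\Z)\to K_1(A)$ is induced by the $*$-homomorphism $\mathrm{ev}_1\colon C(\iota_Q)\to\C$. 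It is not a boundary map.
\item The only place in the $3\times3$ diagram where two boundary maps compose is $K_*(SA)\to K_*(\mathcal{K}\otimes SQ)$. In degree $0$ the target is $K_0(SQ)=0$. In degree $1$ both composites vanish because $\gamma(\alpha)=0$. So the anticommutativity lemma says nothing about $K_1(\mathcal{K}\otimes C(\iota_Q))$, where $\delta(\alpha)$ takes values. The ``two composite boundary maps $K_0(SA\otimes C(\iota_Q))\to K_0(\mathcal{K})$'' in your last paragraph do not exist.
\item Testing one example only fixes a sign once you know the two maps differ by a universal sign, and that is exactly what is missing. Also, $C(S^1)$ has no torsion in $K$-theory, so it tests nothing.
\end{enumerate}

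\textbf{A chase that works, with no signs.} Write $C=C(\iota_Q)$ and let $J=\mathcal{K}\otimes C+E\otimes SQ$. This is a closed ideal of $E\otimes C$ with $E\otimes C/J\cong SA$ via $\pi\circ\mathrm{ev}_1$.
\begin{enumerate}
\item The row $0\to\mathcal{K}\otimes C\to E\otimes C\to SA\otimes C\to0$ and the column $0\to E\otimes SQ\to E\otimes C\to E\to0$ both map into $0\to J\to E\otimes C\to SA\to0$. The middle map is the identity, and the right-hand maps are $\mathrm{ev}_1$ and $\pi$ respectively. Naturality of boundary maps gives
\[
\iota_*\partial(z)=\partial_J(y)=\iota'_*\partial^E(e).
\]
Here $\iota,\iota'$ are the inclusions of $\mathcal{K}\otimes C$ and $E\otimes SQ$ into $J$, and $\partial^E\colon K_0(E)\to K_1(E\otimes SQ)\cong K_0(E)\otimes\Q$ is the column boundary.
\item By naturality, $n\,\partial^E(e)=\partial^E(ne)$ is the image of $m\in K_1(\mathcal{K}\otimes SQ)=\Q$. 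Since $K_1(E\otimes SQ)$ is uniquely divisible, $\partial^E(e)$ is the image of $m/n$. Hence $\iota_*(\partial(z)-[m/n])=0$.
\item $\iota_*$ is injective on $K_1(\mathcal{K}\otimes C)$. Its kernel is the image of the boundary map of $0\to\mathcal{K}\otimes C\to J\to SA\otimes SQ\to0$. The row tensored with $SQ$ maps into this extension, so by naturality that boundary factors through $\alpha_*\colon K_0(A;\Q)\to\Q$, which is zero because $\alpha\in k^0(A)$.
\end{enumerate}
Therefore $\delta(\alpha)(y)=\partial(z)=[m/n]=\delta_2(\alpha)(y)$.

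The paper argues differently: it shows that $\widetilde{\delta(\alpha)}\circ\pi$ kills the torsion of $K_0(E)$ and then lifts to $\Q$. Be aware that killing torsion alone does not determine the values you need. For $A=I_2$ and $\alpha\neq0$, one has $K_0(E)\cong\Z$ with $K_0(\mathcal{K})\to K_0(E)$ multiplication by $2$. There is no torsion to kill, yet $\delta_2(\alpha)\neq0$. So a chase like the one above is what actually carries the identification.
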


\begin{proof}
For notational simplicity, we focus on the case of $\alpha\in k^0(A)$; the case of $k^1(A)$ is essentially the same.  Let $\delta(\alpha):\tor K_1(A)\to \Q/\Z$ be as in Definition \ref{sec pair def}.  Using injectivity of $\Q/\Z$, extend this (arbitrarily) to a homomorphism 
\begin{equation}\label{wtd}
\widetilde{\delta(\alpha)}:K_1(A)\to \Q/\Z.
\end{equation}

Let us represent $\alpha$ as a (semisplit) extension $0\to \mathcal{K}\to E\to SA\to 0$ as in line \eqref{alpha ext}, and let $0\to SQ\to C(\iota_Q)\to \C\to 0$ be mapping cone extension from line \eqref{qz ses}.  Taking the tensor product of these gives a square of exact sequences 
\begin{equation}\label{3x3}
\xymatrix{ & 0 \ar[d]& 0 \ar[d]& 0\ar[d] & \\
0 \ar[r]  & \mathcal{K}\otimes SQ \ar[r] \ar[d] & E\otimes SQ \ar[r] \ar[d] & SA\otimes SQ \ar[d] \ar[r] & 0 \\
0 \ar[r]  & \mathcal{K}\otimes C(\iota) \ar[r] \ar[d] & E\otimes C(\iota) \ar[r] \ar[d] & SA\otimes C(\iota) \ar[d]  \ar[r] & 0 \\
0 \ar[r] & \mathcal{K}\otimes \C \ar[r] \ar[d] & E\otimes \C \ar[r] \ar[d] & SA\otimes \C \ar[r] \ar[d] & 0\\
& 0 & 0 & 0 & }.
\end{equation}
Taking $K$-theory of part of this diagram, we consider the following diagram
\begin{equation}\label{bok}
\xymatrix{ & & & K_0(\mathcal{K}\otimes \C) \ar[d]  \\
& &K_0(SA\otimes SQ) \ar[r] \ar[d] & K_1(\mathcal{K}\otimes SQ) \ar[d]  \\
& K_0(E\otimes C(\iota_Q)) \ar[r] \ar[d] & K_0(SA\otimes C(\iota_Q)) \ar[r] \ar[d]  & K_1(\mathcal{K}\otimes C(\iota_Q)) \\
K_0(\mathcal{K}\otimes \C) \ar[r] & K_0(E\otimes \C) \ar[r]^-\pi \ar[d] & K_0(SA\otimes \C) \ar[ur]_-{\widetilde{\delta(\alpha)}} & \\
& K_1(E\otimes SQ) & & &}.
\end{equation}
Here all horizontal and vertical arrows are induced by those in line \eqref{3x3} either directly or as $K$-theory boundary maps, and the map $\widetilde{\delta(\alpha)}$ makes the diagram commute by definition.

We claim that if $x\in K_0(E\otimes \C)$ is a torsion element, then $(\widetilde{\delta(\alpha)}\circ \pi)(x)=0$.  Indeed, if $x$ is torsion it maps to zero under the vertical map $K_0(E\otimes \C)\to K_1(E\otimes SQ)\cong K_0(E)\otimes \Q$ and therefore comes from an element $\widetilde{x}\in K_0(E\otimes C(\iota_Q))$ by exactness.  However, the composition 
$$
K_0(E\otimes C(\iota_Q)) \to K_0(SA\otimes C(\iota_Q)) \to K_1(\mathcal{K}\otimes C(\iota_Q)) 
$$
sends $\widetilde{x}$ to zero by exactness.  This implies that $(\widetilde{\delta(\alpha)}\circ \pi)(x)=0$ by commutativity of the diagram in line \eqref{bok}.

Now, let us simplify diagram \eqref{bok} to the diagram below by removing some data we no longer need, and computing the groups in the right hand column:
\begin{equation}\label{bok2}
\xymatrix{ & & & \Z \ar[d]  \\
& & & \Q \ar[d]  \\
& &  & \Q/\Z  \\
K_0(\mathcal{K})\ar[r] & K_0(E) \ar[r]^-\pi  & K_1(A) \ar[ur]_-{\widetilde{\delta(\alpha)}} & }.
\end{equation}
It follows from the claim that the map 
$$
\widetilde{\delta(\alpha)}\circ \pi:K_0(E)\to K_1(\mathcal{K}\otimes C(\iota_Q))=\Q/\Z
$$
factors in the form
$$
\xymatrix{ K_0(E) \ar[rr]^-{\text{id}\otimes 1_\Q} && K_0(E)\otimes \Q \ar[rr]^-{(\widetilde{\delta(\alpha)}\circ \pi)\otimes \rho } & & \Q/\Z},
$$
where $\rho:\Q\to \Q/\Z$ is the canonical quotient; note that the second of the maps above is naturally a $\Q$-module map.  We add this data to diagram \eqref{bok2} as follows
\begin{equation}\label{bok3}
\xymatrix{ & & & & \Z \ar[d] & \\
& & & & \Q \ar[d] & \\
& & K_0(E)\otimes \Q \ar@{-->}[urr] \ar[rr]^-{(\widetilde{\delta(\alpha)}\circ\pi)\otimes \rho} & & \Q/\Z \\
K_0(\mathcal{K}) \ar@{=}[uuurrrr] \ar[r] & K_0(E) \ar[r]^-\pi \ar[ur] & K_1(A) \ar[urr]_-{\widetilde{\delta(\alpha)}} & & }
\end{equation}
where the equality sign is the canonical identification, and the dashed arrow will be explained next. Indeed, $K_0(E)\otimes \Q$ is a $\Q$-vector space (in particular, it is free as a $\Q$-module and all its $\Q$-submodules are free and complemented) and $(\widetilde{\delta(\alpha)}\circ\pi)\otimes \rho$ is a $\Q$-module map, from which it follows that we can fill in the dashed arrow in line \eqref{bok3} with a $\Q$-module map so that the diagram commutes.  

Removing the no-longer-necessary data from the middle of the diagram, we now have the following commutative diagram:
$$
\xymatrix{ & & & \Z \ar[d]  \\
& & & \Q \ar[d]  \\
& &  & \Q/\Z  \\
K_0(\mathcal{K})\ar[r] \ar@{=}[uuurrr]& K_0(E) \ar[uurr] \ar[r]^-\pi  & K_1(A) \ar[ur]_-{\widetilde{\delta(\alpha)}} & }.
$$
According to the definition of $\delta_2(\alpha)$ (compare line \eqref{ext dash} above), this implies that $\delta_2(\alpha)$ agrees with the restriction of $\widetilde{\delta(\alpha)}$ to $\tor K_1(A)$, and we are done.
\end{proof}

\subsection{Relationship to the universal multicoefficient theorem}\label{mc sec}

We now relate our secondary pairing to the universal multicoefficient theorem (UMCT) of Dadarlat-Loring \cite{Dadarlat:1996aa}, and also to Dadarlat's work \cite{Dadarlat:2005aa} on the topology on $KK$-theory and $KL$-theory.  We then use the UMCT to show that if $A$ satisfies the UCT, then the secondary pairing maps
$$
\delta:kl^i(A)\to \text{Hom}(t K_{i+1}(A),\Q/\Z)
$$
are homeomorphic isomorphisms for $i\in \{0,1\}$.

We need to recall some background on $K$-theory with finite coefficients and total $K$-theory.   Analogously to line \eqref{cone qz} above, let $\iota_n:\C\to M_n(\C)$ be the unit inclusion, and let $I_n:=C(\iota_n)$ be its mapping cone, i.e.\
\begin{equation}\label{dd alg}
I_n:=\{f\in C([0,1],M_n(\C))\mid f(0)=0,~f(1)\in \C1_{M_{n}(\C)}\}.
\end{equation}
The $C^*$-algebra $I_n$ is usually called a \emph{dimension drop} $C^*$-algebra.  Taking the $K$-theory six-term exact sequence associated to the short exact sequence 
\begin{equation}\label{dd ses}
0\to SM_n(\C)\to I_n\to \C\to 0,
\end{equation}
one computes\footnote{See for example \cite[Section 13.1]{Rordam:2000mz}, or use general facts on mapping cones as in for example \cite[Exercise 6.M]{Wegge-Olsen:1993kx}.} that $K_1(I_n)\cong \Z/n$ and $K_0(I_n)=0$.  Analogously to the discussion around line \eqref{kci} above, the isomorphism $K_1(I_n)\cong \Z/n$ is canonically determined by the short exact sequence in line \eqref{dd ses} and the $K$-theory six-term exact sequence, so we may fix this isomorphism and define $K_i(A;\Z/n):=K_{i+1}(A\otimes I_n)$ for any $C^*$-algebra $A$.  For notational convenience, we also set $I_1:=\C$.

We now define the category $\Lambda$ underlying the universal multicoefficient theorem.  

\begin{definition}\label{lambda def}
Write $\N$ for the positive integers $1,2,3,...$, and write the elements of $\Z/2$ as $0,1$.  We let $\Lambda$ denote the (additive) category with objects $\Z/2\times \N$, morphisms from $(i,n)$ to $(j,m)$ given by $KK^{i+j}(I_n,I_m)$, and composition given by Kasparov product.  
\end{definition}

\begin{examples}\label{lambda ex}
We compute the morphism groups $KK^j(I_n,I_m)$ underlying $\Lambda$ concretely, and exhibit generators; this will be important for some computations later in the section.  This is based on the notation and material in \cite{Dadarlat:1996aa}: it is known to experts but we do not know a reference for it in the literature, so provide arguments.  

Each of the points below computes some special cases of the morphism groups $KK^j(I_n,I_m)$ as abstract groups: these computations follow from the UCT exact sequence as in line \eqref{uct ses}, and we leave them to the reader.   We then exhibit an explicit generator in each case: in fact, the groups $KK^j(I_n,I_m)$ all turn out to be cyclic, so to exhibit generators it suffices to exhibit an element of $KK^j(I_n,I_m)$ whose order is (at least) that of the group.  Moreover, to bound the order of an element $\alpha\in KK(A,B)$ below, it suffices to compute the order of its image in some group $\text{Hom}(K_*(A\otimes C),K_*(B\otimes C))$; this is generally what we will do. 
\begin{enumerate}[(i)]
\item \label{kap mn n} For $n,m\geq 2$, the group $KK(I_n,I_m)$ is isomorphic to $\Z/\text{gcd}(m:n)$.  

First, in the special case $KK(I_n,I_{mn})$ it is generated by the class of the homomorphism $\kappa_{mn,n}:I_n\to I_{mn}$ induced by the canonical unital inclusion $M_n\to M_{mn}$. To see that $\kappa_{mn,n}\in KK(I_{mn},I_n)$ is a generator, consider first the commutative diagram 
\begin{equation}\label{kap mn n diag}
\xymatrix{ 0 \ar[r] & SM_n \ar[r] \ar[d]^-{\kappa_{mn,n}} & I_n\ar[r] \ar[d]^-{\kappa_{mn,n}} & \C \ar[r] \ar@{=}[d] & 0 \\
0 \ar[r] & SM_{mn} \ar[r] & I_{mn}\ar[r] & \C \ar[r] & 0 }.
\end{equation}
Note that the map induced on $K$-theory by the restriction of $\kappa_{mn,n}$ to $SM_n$ is multiplication by $m$.  Considering the $K$-theory groups of the diagram in line \eqref{kap mn n diag} that $\kappa_{mn.n}$ induces the map 
$$
\Z/n\cong K_1(I_n)\to K_1(I_{mn})\cong \Z/(mn)
$$
defined by sending $[1]$ to $[m]$.  This has order $n$ in $\text{Hom}(\Z/n,\Z/(mn))$, so $\kappa_{mn,n}$ generates $KK(I_n,I_{mn})\cong \Z/n$.  

Similarly, In the special case $KK(I_{mn},I_n)$, it is generated by the class of the natural inclusion $\kappa_{n,mn}:I_{mn}\to M_m(I_n)$. To see this, consider the diagram
\begin{equation}\label{kap n mn diag}
\xymatrix{ 0 \ar[r] & SM_{mn} \ar[r] \ar[d]^-{\kappa_{n,mn}} & I_n\ar[r] \ar[d]^-{\kappa_{n,mn}} & \C \ar[r] \ar[d] & 0 \\
0 \ar[r] & M_m(SM_n) \ar[r] & I_{mn}\ar[r] & M_m(\C) \ar[r] & 0 }.
\end{equation}
Note that the map induced on $K$-theory by the restriction of $\kappa_{mn,n}$ to $SM_{mn}$ is the identity (having canonically identified the groups with $\Z$).  Hence $\kappa_{n,mn}$ induces the natural surjection 
$$
\Z/(mn)\cong K_1(I_{mn})\to K_1(I_{n})\cong \Z/n
$$
defined by sending $[1]$ to $[1]$.  This has order $n$ in $\text{Hom}(\Z/(mn),\Z/n)$, so $\kappa_{n,mn}$ generates $KK(I_{mn},I_n)\cong \Z/n$. 

In general, the group $KK(I_n,I_m)$ is generated by $\kappa_{m,\text{gcd}(m:n)}\circ \kappa_{\text{gcd}(m:n),n}$.  Indeed, the computations above combine to show that this map induces the map
$$
\Z/n\cong K_0(I_n)\to K_0(I_m)\cong \Z/m
$$
sending $[1]$ to $[m/\text{gcd}(m:n)]$, so has the right order.

\item \label{rhom} For $m\geq 2$, the group $KK(I_1,I_m)$ is isomorphic to $\Z/m$, and is generated by the class of the inclusion homomorphism $\rho_m:SM_m\to I_m$ appearing in line \eqref{dd ses}.  To see that this generates, the six-term exact sequence in $K$-theory associated to the short exact sequence in line \eqref{dd ses} shows that the map 
$$
\rho_m:\Z\cong K_1(SM_n)\to K_1(I_m)\cong \Z/m
$$
it induces in the usual quotient map sending $1$ to $[1]$.  This has order $m$, whence generates $KK(I_1,I_m)\cong \Z/m$.
\item \label{betam} For $m\geq 2$, the group $KK^1(I_m,I_1)$ is isomorphic to $\Z/m$, and is generated by the class of the homomorphism $\beta_m:I_m\to \C$ appearing in line \eqref{dd ses} above.  To see that it generates, let $A=SI_m$.  Tensoring $A$ by the short exact sequence in line \eqref{dd ses} (with $m=n$) and taking $K$-theory gives a six-term exact sequence 
\begin{equation}\label{betam les}
\xymatrix{ K_0(SM_m(A)) \ar[r]^-{\rho_m} & K_0(A\otimes I_m) \ar[r]^-{\beta_m} & K_0(A) \ar[d] \\
K_1(A) \ar[u] & K_1(A\otimes I_m) \ar[l]^-{\beta_m} & K_1(SM_m(A)) \ar[l]^-{\rho_m} }.
\end{equation}
Note that: $K_0(A)\cong \Z/m$ and $K_1(A)=0$; having identified $K_i(SM_m(A))$ with $K_{i+1}(A)$, the boundary maps induce multiplication by $m$; and that $K_i(A\otimes I_m)\cong \Z/m$ for $i\in \{0,1\}$ (e.g.\ from the K\"{u}nneth formula \cite{Schochet:1982aa}).  Hence line \eqref{betam les} above simplifies to 
\begin{equation}\label{rhom les}
\xymatrix{ 0 \ar[r]^-{\rho_m} & \Z/m \ar[r]^-{\beta_m} & \Z/m \ar[d]^-{\times m} \\
0 \ar[u] & \Z/m \ar[l]^-{\beta_m}  & \Z/m \ar[l]^-{\rho_m}}.
\end{equation}
As the boundary map is zero, the map 
$$
\Z/m\cong K_0(A\otimes I_m)\to K_0(A)\cong \Z/m
$$
induced by $\beta_m$ is an isomorphism.  Hence $\beta_m$ has order at least $m$ in $KK^1(I_1,I_m)$, so generates.
\item For $m,n\geq 2$, the group $KK^1(I_n,I_m)$ is isomorphic to $\Z/\text{gcd}(n;m)$.  It is generated by the composition $\rho_m\circ \beta_n$.  To see that this generates, we consider $A=SI_m$ as above, but now tensor the short exact sequence in line \eqref{dd ses} for arbitrary $n\geq 2$ by $A$.  The analogue of the diagram in line \eqref{betam les} then simplifies to 
$$
\xymatrix{ 0 \ar[r] & \Z/m\otimes \Z/n \ar[r]^-{\beta_n} & \Z/m \ar[d]^-{\times n} \\
0 \ar[u] & \Z/m\otimes \Z/n \ar[l]^-{\beta_n}  & \Z/m \ar[l]}.
$$
As $\Z/m\otimes \Z/n\cong \Z/\text{gcd}(n:m)$ it follows that $\beta_n$ induces the canonical inclusion
$$
\Z/\text{gcd}(n:m)\cong K_1(A\otimes I_m)\to K_0(A)\cong \Z/m
$$
sending $[1]$ to $[m/\text{gcd}(m:n)]$.  On the other hand, the diagram in line \eqref{rhom les} shows that for this $A$, the map 
$$
\Z/m\cong K_0(A)\to K_1(A\otimes I_m)\cong \Z/m
$$
induced by $\rho_m$ is an isomorphism.  It follows that the composition $\rho_m\circ \beta_n$ induces a map 
$$
\Z/\text{gcd}(m:n)\cong K_1(A\otimes I_n)\to K_0(A)\to K_0(A\otimes I_m)\cong \Z/m
$$
of order $\text{gcd}(n:m)$, and therefore $\rho_m\circ \beta_n$ generates $KK^1(I_n,I_m)\cong \Z/\text{gcd}(n:m)$.
\item The group $KK(I_1,I_1)$ is isomorphic to $\Z$ generated by the class of the identity morphism. 
\item For any $m\geq 2$, $KK(I_m,I_1)=KK^1(I_1,I_m)=KK^1(I_1,I_1)=0$.
\end{enumerate}
\end{examples}

We will need two technical lemmas about the morphisms from part \eqref{kap mn n} above.  For the statement of the first, for an abelian group $G$, we recall the usual Tor group of homological algebra, which can be concretely defined by 
\begin{equation}\label{tor zn}
\text{Tor}(\Z/n,G):=\{g\in G\mid ng=0\}.
\end{equation}

\begin{lemma}\label{rhom coker}
Let $A$ be a $C^*$-algebra.  Tensoring the short exact sequence of line \eqref{dd ses} by $A$ and taking $K$-theory gives
\begin{equation}\label{base les}
\xymatrix{ \cdots \ar[r] & K_1(SM_n(A)) \ar[r]^-{\rho_n} & K_1(A\otimes I_n) \ar[r] & K_1(A) \ar[r] & K_0(SM_n(A)) \ar[r] & \cdots }.
\end{equation}
The cokernel of the map $\rho_n$\footnote{Technically, the map on $K$-theory induced by the map $\rho_n\otimes \text{id}_A\in KK(I_1\otimes A,I_n\otimes A)$, but we will elide this.} is canonically isomorphic to $\text{Tor}(\Z/n,K_1(A))$.
\end{lemma}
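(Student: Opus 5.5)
By exactness of the long exact sequence in line \eqref{base les}, the cokernel of $\rho_n$ is canonically isomorphic to the image of $K_1(A\otimes I_n)\to K_1(A)$. That image is in turn the kernel of the next map, the boundary $\partial:K_1(A)\to K_0(SM_n(A))$. So the plan is to identify this boundary map, after the canonical identifications $K_0(SM_n(A))\cong K_1(M_n(A))\cong K_1(A)$, with multiplication by $n$ on $K_1(A)$. Once that is done, the kernel is $\{x\in K_1(A)\mid nx=0\}=\text{Tor}(\Z/n,K_1(A))$ as in line \eqref{tor zn}, and the composite
$$\text{coker}(\rho_n)\stackrel{\cong}{\to}\text{Im}\bigl(K_1(A\otimes I_n)\to K_1(A)\bigr)=\ker(\partial)=\text{Tor}(\Z/n,K_1(A))$$
is the required canonical isomorphism.

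To compute $\partial$, I would use the standard fact about mapping cones. For the cone $C(\iota)$ of a $*$-homomorphism $\iota:B\to D$, with extension $0\to SD\to C(\iota)\to B\to 0$, the boundary map $K_*(B)\to K_{*+1}(SD)\cong K_*(D)$ equals $\iota_*$, possibly up to a universal sign depending on conventions. Here the extension is $0\to SM_n(A)\to A\otimes I_n\to A\to 0$, which is the mapping cone of $\text{id}_A\otimes\iota_n:A\to M_n(A)$. The map this induces on $K$-theory, $K_1(A)\to K_1(M_n(A))\cong K_1(A)$, is $x\mapsto nx$, because $a\mapsto a\otimes 1_n$ is the $n$-fold direct sum of the identity up to the stabilization isomorphism.

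I would double-check the mapping-cone fact through naturality. The extension $0\to SM_n(A)\to A\otimes I_n\to A\to 0$ is the tensor product of $A$ with line \eqref{dd ses}, so its boundary map is the Kasparov product with the boundary class of \eqref{dd ses} in $KK^1(\C,SM_n)\cong\Z$, tensored with $\text{id}_A$. The computation $K_1(I_n)\cong\Z/n$, $K_0(I_n)=0$ already fixes that boundary as multiplication by $\pm n$ on $\Z$. Since the class lives in $KK(\C,M_n)\cong\Z$ and is determined by its action on $K_0(\C)$, tensoring with $\text{id}_A$ gives $\pm n$ on $K_1(A)$. This route avoids any detailed analysis of the cone.

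The main obstacle is making this identification rigorous for arbitrary, possibly nonseparable, $A$, and keeping track of the suspension identifications and the sign. The sign does not matter, since the kernels of $\pm n$ coincide. Degrees need care: the odd boundary $K_1(A)\to K_0(SM_n(A))$ is an exponential-type map, and its identification with $K_1(M_n(A))$ goes through Bott periodicity. Compatibility with $\text{id}_A\otimes-$ then follows from naturality of boundary maps with respect to tensoring by $A$, together with the $KK$-class argument above; for nonseparable $A$ one passes to separable subalgebras and takes an inductive limit, as in the earlier footnote.
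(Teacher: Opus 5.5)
Your proposal is correct and follows essentially the same route as the paper: exactness identifies $\text{coker}(\rho_n)$ with $\ker\bigl(\partial:K_1(A)\to K_0(SM_n(A))\bigr)$, and $\partial$ is identified with multiplication by $n$ via the mapping cone of the diagonal inclusion $A\to M_n(A)$ (your $KK$ cross-check is extra but sound). One small slip in your closing remarks, which does not affect the argument: the boundary $K_1(A)\to K_0(SM_n(A))$ is the index map, and the identification $K_0(SD)\cong K_1(D)$ is the elementary suspension isomorphism rather than Bott periodicity.
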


\begin{proof}
Analogously to the computations showing that $K_1(I_n)\cong \Z/n$ cited above\footnote{i.e.\ based on \cite[Section 13.1]{Rordam:2000mz}, or using general facts on mapping cones as in for example \cite[Exercise 6.M]{Wegge-Olsen:1993kx} and that $C(\iota_n)\otimes A$ is the mapping cone of the diagonal inclusion $A\to M_n(A)$.}, one computes that having made the identification $K_0(SM_n(A))=K_1(A)$, the boundary map $K_1(A)\to K_0(SM_n(A))$ above is given by multiplication by $n$.  The result follows from exactness.
\end{proof}

\begin{lemma}\label{kap lem}
The morphisms $\kappa_{mn,n}$ and $\kappa_{n,mn}$ of Examples \ref{lambda ex} part \eqref{kap mn n} canonically induce maps on the cokernels of $\rho_{n}$ and $\rho_{mn}$ respectively, i.e.\ according to Lemma \ref{rhom coker} we have 
$$
\kappa_{mn,n}:\text{Tor}(\Z/n,K_1(A))\to \text{Tor}(\Z/(mn),K_1(A))
$$
and 
$$
\kappa_{n,mn}:\text{Tor}(\Z/(mn),K_1(A))\to \text{Tor}(\Z/n,K_1(A)).
$$
Moreover, the first of these is the canonical inclusion, and the second is given by multiplication by $m$.
\end{lemma}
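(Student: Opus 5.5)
The proof is a naturality argument for the long exact sequence in line \eqref{base les}, applied to the morphisms of short exact sequences in lines \eqref{kap mn n diag} and \eqref{kap n mn diag}. I would tensor each of these diagrams by $A$. This gives commutative diagrams of extensions, namely the map of extensions
$$0\to SM_n(A)\to A\otimes I_n\to A\to 0$$
into
$$0\to SM_{mn}(A)\to A\otimes I_{mn}\to A\to 0$$
(and similarly with $M_m(\cdot)$ in the second case). Naturality of the six-term exact sequence then yields a commutative ladder between the two copies of \eqref{base les}. In particular $\kappa$ intertwines the maps $\rho$ on the left. So it sends the image of $\rho_n$ into the image of $\rho_{mn}$ (respectively the reverse), and hence descends to a map on cokernels. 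This handles the first assertion.

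To identify the induced maps, I use the proof of Lemma \ref{rhom coker}. There the cokernel of $\rho_n$ is identified, via the map $K_1(A\otimes I_n)\to K_1(A)$, with the kernel of the boundary map $K_1(A)\to K_0(SM_n(A))\cong K_1(A)$, which is multiplication by $n$. Thus the identification of $\text{coker}(\rho_n)$ with $\text{Tor}(\Z/n,K_1(A))$ is induced by the quotient-to-$\C$ map. Under this identification, the map on cokernels is simply the right-hand vertical map of the ladder, restricted to the torsion subgroups.

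For $\kappa_{mn,n}$, the right-hand vertical map $\C\to\C$ is the identity. It therefore induces the identity on $K_1(A)$, and the induced map on torsion subgroups is the inclusion $\text{Tor}(\Z/n,K_1(A))\subseteq \text{Tor}(\Z/(mn),K_1(A))$.

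For $\kappa_{n,mn}$, the right-hand map is the unital (diagonal) embedding $\C\to M_m(\C)$. After the identification $K_1(M_m(A))=K_1(A)$, it induces multiplication by $m$ on $K_1(A)$, so the induced map on torsion subgroups is $x\mapsto mx$. This indeed lands in $\text{Tor}(\Z/n,K_1(A))$, since $n(mx)=0$ whenever $mnx=0$.

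The main obstacle is the bookkeeping of identifications. The middle term of the second ladder is $M_m(A\otimes I_n)$, whose $K$-theory must be identified with $K_1(A\otimes I_n)$ via the corner embedding. I also need to check that the left-hand vertical maps are compatible with the boundary maps: on $K_1(A)$ they are the identity and multiplication by $m$ respectively, as noted in Examples \ref{lambda ex}, consistent with the boundary maps being multiplication by $n$ and $mn$. I would check this consistency explicitly, since the identifications in Lemma \ref{rhom coker} depend on it.
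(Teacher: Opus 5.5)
Your proposal is correct and is essentially the paper's proof: tensor the diagrams \eqref{kap mn n diag} and \eqref{kap n mn diag} by $A$, use naturality of the six-term sequence to get commutative ladders, and read off the induced maps on $\text{Tor}(\Z/n,K_1(A))\cong\text{coker}(\rho_n)$ from the right-hand vertical maps on $K_1(A)$, which are the identity and multiplication by $m$. One small slip in your last paragraph: the left-hand vertical maps are the other way round from what you wrote, namely multiplication by $m$ for $\kappa_{mn,n}$ and the identity for $\kappa_{n,mn}$. That is what makes the ladders compatible with the boundary maps $\times n$ and $\times mn$; it follows automatically from naturality and does not affect your argument.
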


\begin{proof}
For $\kappa_{mn,n}$ we tensor the commutative diagram of short exact sequences in line \eqref{kap mn n diag} by $A$, and take $K$-theory to get a commutative diagram 
$$
\xymatrix{ \cdots \ar[r] & K_0(A) \ar[r]^-{\rho_m} \ar[d]^-{\times m} & K_0(A;\Z/n) \ar[r]^-{\beta_m} \ar[d]^-{\kappa_{mn,n}} & K_1(A) \ar@{=}[d] \ar[r]^{\times n} & K_1(A)\ar[r] \ar[d]^{\times m} & \cdots \\
\cdots \ar[r] & K_0(A) \ar[r]^-{\rho_{mn}} & K_0(A;\Z/(mn)) \ar[r]^-{\beta_{mn}} & K_1(A) \ar[r]^{\times mn} & K_1(A)\ar[r] & \cdots }.
$$
The claimed statement follows directly from this.  Similarly, for $\kappa_{n,mn}$, we tensor the diagram in line \eqref{kap n mn diag} by $A$ and take $K$-theory to get a commutative diagram 
$$
\xymatrix{ \cdots \ar[r] & K_0(A) \ar[r]^-{\rho_{mn}} \ar@{=}[d] & K_0(A;\Z/(mn)) \ar[r]^-{\beta_{mn}} \ar[d]^-{\kappa_{n,mn}} & K_1(A) \ar[d]^-{\times m} \ar[r]^{\times mn} & K_1(A)\ar[r] \ar@{=}[d] & \cdots \\
\cdots \ar[r] & K_0(A) \ar[r]^-{\rho_m} & K_0(A;\Z/n) \ar[r]^-{\beta_m} & K_1(A) \ar[r]^{\times n} & K_1(A)\ar[r] & \cdots }
$$
from which the claimed result follows.
\end{proof}

Going back now to generalities with $\Lambda$ as in Definition \ref{lambda def}, a \emph{$\Lambda$-module} is a covariant functor $\underline{G}$ from $\Lambda$ to the category of abelian groups.  A \emph{morphism} between $\Lambda$-modules is a natural transformation between such functors.   We write $\text{Hom}_{\Lambda}(\underline{G},\underline{H})$ for the abelian group of all morphisms between $\Lambda$-modules $\underline{G}$ and $\underline{H}$.

Let us describe this more concretely, mainly to establish notation that we will use later.  A $\Lambda$-module consists of a collection $\underline{G}=(G_{i,n})_{i\in \Z/2,n\in \N}$ of abelian groups together with homomorphisms 
\begin{equation}\label{lam mor}
KK^j(I_n,I_m)\to \text{Hom}(G_{i,n},G_{i+j,m})
\end{equation}
for each $i,j\in \Z/2$ and $n,m\in \N$ that are compatible with compositions in the $KK$-category.   A morphism in $\text{Hom}_{\Lambda}(\underline{G},\underline{H})$ consists of a collection $\phi=(\phi_{i,n}:G_{i,n}\to H_{i,n})_{i\in \Z/2,n\in \N}$ of group homomorphisms that are compatible with the images of the morphisms in $\Lambda$ under the maps in line \eqref{lam mor}.   We equip $\text{Hom}_{\Lambda}(\underline{G},\underline{H})$ with the topology of pointwise convergence, i.e.\ the topology it inherits from the natural inclusion 
$$
\text{Hom}_{\Lambda}(\underline{G},\underline{H})\subseteq \prod_{i\in \Z/2,n\in \N} \text{Hom}(G_{i,n},H_{i,n})
$$
where the $G_{i,n}$ and $H_{i,n}$ have the discrete topology, each $\text{Hom}(G_{i,n},H_{i,n})$ has the topology of pointwise convergence, and the product has the product topology.   

The motivating examples of $\Lambda$-modules are \emph{total $K$-groups}\footnote{The term ``total $K$-group'' is a bit of a misnomer: it is a $\Lambda$-module, not a group.  One could also package the same information into a single $\Z/2\times \N$-graded group, however as in \cite[Section 6]{Dadarlat:2012aa}.} of $C^*$-algebras: if $A$ is a $C^*$-algebra, we write $\underline{K}(A)$ for the $\Lambda$-module with $(i,n)^\text{th}$ group given by $K_i(A)$ for $n=1$, and $K_i(A;\Z/n)=K_{i+1}(A\otimes I_n)$ for $n\geq 2$.  The action of $\Lambda$ as in line \eqref{lam mor} is induced by Kasparov product.

\begin{remark}\label{tot kh}
We will not need it until later in the paper, but less us also note that one can similarly define total $K$-homology.  Indeed, if $A$ is a separable $C^*$-algebra, we may also define the \emph{$K$-homology of $A$ with coefficients in $\Z/n$} to be $K^i(A;\Z/n):=KK^{i+1}(A,I_n)$.  The collection of all $K$-homology groups of a $C^*$-algebra with all cyclic coefficients can be similarly made into a $\Lambda$-module, which we call the \emph{total $K$-homology} and denote by $\overline{K}(A)$.  If $A=C_0(X)$ for a locally compact space $X$, we instead write $\underline{K}(X)$ for total $K$-homology.
\end{remark}

Note that Kasparov product induces a canonical homomorphism 
\begin{equation}\label{under gamma}
\underline{\gamma}:KK(A,B)\to \text{Hom}_{\Lambda}(\underline{K}(A),\underline{K}(B)).
\end{equation}
This is compatible with the usual pairing 
$$
\gamma:KK(A,B)\to \text{Hom}(K_*(A),K_*(B))
$$
in the following sense.  Indeed, for a $\Lambda$-module $\underline{G}$, let $G_*$ denote the graded abelian group with $G_i:=G_{i,1}$ and define a \emph{forgetful map}
\begin{equation}\label{flambda}
f_\Lambda:\text{Hom}_{\Lambda}(\underline{G},\underline{H})\to \text{Hom}(G_{*},H_{*}),\quad (\phi_{i,n})_{i\in \Z/2,n\in \N}\mapsto (\phi_{i,1})_{i\in \Z/2}.
\end{equation}
The fact that $\gamma$ and $\underline{\gamma}$ are compatible just means that
\begin{equation}\label{gfl}
f_\Lambda\circ \underline{\gamma}=\gamma.
\end{equation}

The following lemma is the key technical ingredient needed for our main result in this section.   The proof is a little long\footnote{It is even longer that it looks: the main reason we went over Examples \ref{lambda ex} in detail and proved Lemma \ref{kap lem} was that those computations are needed to prove this lemma.  It would be interesting to have an approach that is more conceptual, and does not need the explicit descriptions of generators of $\Lambda$.}, but not deep.  

\begin{lemma}\label{annoying algebra}
Let $A$ be a separable $C^*$-algebra, and let $f_\Lambda$ be as in line \eqref{flambda} for $\underline{G}=\underline{K}(A)$ and $\underline{H}=\underline{K}(\C)$.    Then there is an injective homomorphism 
$$
\epsilon:\text{ker}(\Lambda_f)\to \prod_{m\geq 2} \text{Hom}(\text{Tor}(\Z/m,K_1(A)),\Z/m).
$$
The image of $\epsilon$ consists exactly of those tuples $(\psi_m)_{m\geq 2}$ such that for all $m,n\geq 2$, the following diagram commutes
\begin{equation}\label{kap tor}
\xymatrix{ \text{Tor}(\Z/m,K_1(A))\ar[r]^-{\kappa_{mn,m}} \ar[d]^-{\psi_{m}} & \text{Tor}(\Z/(mn),K_1(A))\ar[r]^-{\kappa_{n,mn}} \ar[d]^-{\psi_{mn}} & \text{Tor}(\Z/n,K_1(A)) \ar[d]^-{\psi_{n}}  \\
 \Z/m \ar[r]^-{[1]\mapsto [n]} & \Z/(mn) \ar[r]^-{[1]\mapsto [1]} & \Z/n }
\end{equation}
where the morphisms on the top row are those from Lemma \ref{kap lem}.
\end{lemma}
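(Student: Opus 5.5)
Write $\underline{G}=\underline{K}(A)$ and $\underline{H}=\underline{K}(\C)$. First record the target: $H_{0,1}=\Z$, $H_{1,1}=0$, and for $n\geq 2$ we have $H_{0,n}=K_1(I_n)\cong\Z/n$ and $H_{1,n}=K_0(I_n)=0$. So a morphism $\phi\in\ker(f_\Lambda)$ has $\phi_{0,1}=\phi_{1,1}=0$, and the only possibly nonzero components are $\phi_{0,n}:K_0(A;\Z/n)\to\Z/n$ for $n\geq 2$.

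Fix $n\geq 2$. Naturality of $\phi$ with respect to $\rho_n\in KK(I_1,I_n)$ (Examples \ref{lambda ex}\eqref{rhom}) gives $\phi_{0,n}\circ\rho_n=\rho_n\circ\phi_{0,1}=0$ on $K_0(A)$. Hence $\phi_{0,n}$ factors through the cokernel of $\rho_n$, which by Lemma \ref{rhom coker} is $\text{Tor}(\Z/n,K_1(A))$; the degree bookkeeping in that lemma is the same after one suspension. Define $\epsilon(\phi):=(\psi_n)_{n\geq 2}$, where $\psi_n$ is the induced map. This is a homomorphism, and it is injective because the quotient map onto the cokernel is surjective, so $\psi_n$ determines $\phi_{0,n}$.

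For the image, I will use that $\Lambda$ is additively generated by compositions of the generators in Examples \ref{lambda ex}: the maps $\kappa_{mn,n}$ and $\kappa_{n,mn}$, the maps $\rho_m$ and $\beta_m$, and identities. Hence a collection $(\phi_{i,n})$ is a $\Lambda$-morphism if and only if it commutes with each of these generators.

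\begin{itemize}
\item \emph{Necessity.} Naturality for $\kappa_{mn,m}$ and $\kappa_{n,mn}$ passes to the cokernels of the $\rho$'s. By Lemma \ref{kap lem} these induced maps are the top-row maps of \eqref{kap tor}. On $\underline{K}(\C)$ the same maps act as $[1]\mapsto[n]$ and $[1]\mapsto[1]$, by the computations in Examples \ref{lambda ex}\eqref{kap mn n}. So the diagram \eqref{kap tor} commutes.
\item \emph{Sufficiency.} Given $(\psi_m)$ making \eqref{kap tor} commute, set $\phi_{0,n}:=\psi_n\circ q_n$, where $q_n$ is the quotient onto the cokernel, and set all other components to zero. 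Then check each generator in turn.
  \begin{itemize}
  \item For $\rho_m$, both sides are zero.
  \item For $\beta_m\in KK^1(I_m,I_1)$, the composite lands in $H_{1,1}=0$ or starts from $G_{1,m}$, where $\phi$ vanishes. The same holds for $\rho_m\circ\beta_n$.
  \item For the $\kappa$'s, naturality is exactly \eqref{kap tor} lifted along $q$, using Lemma \ref{kap lem} once more.
  \item The odd-degree components are automatic, since the relevant target groups vanish.
  \end{itemize}
\end{itemize}

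I expect the main obstacle to be justifying that checking these generators suffices. Each $KK^j(I_n,I_m)$ is cyclic with the generator stated in Examples \ref{lambda ex}, which gives this for the hom-groups listed there. The care needed is in the degree bookkeeping, so that every morphism of each parity and between every pair of objects is accounted for, including the degenerate cases involving $I_1$.
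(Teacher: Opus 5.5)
Your proposal is correct and is essentially the paper's proof. You use $\phi_{0,1}=0$ and naturality for $\rho_n$ to factor $\phi_{0,n}$ through the cokernel of $\rho_n$, identified with $\text{Tor}(\Z/n,K_1(A))$ by Lemma \ref{rhom coker}; you then get surjectivity by checking compatibility with the generators of $\Lambda$ from Examples \ref{lambda ex}, using Lemma \ref{kap lem} for the $\kappa$'s, and you also spell out the necessity of \eqref{kap tor}, which the paper leaves implicit. One small correction: for $\rho_m\circ\beta_n$ as a morphism from $(1,n)$ to $(0,m)$, the side $\phi_{0,m}\circ(\rho_m\circ\beta_n)_*$ vanishes not because a group or component is zero, but because $\phi_{0,m}$ kills the image of $\rho_m$ (as the paper notes), or simply because it is a composite of generators you have already checked.
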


\begin{proof}
Write $\phi=(\phi_{i,n})_{i\in \Z/2,n\in \N}$ for an element of the kernel of $f_\Lambda$, and note that $\phi_{i,n}=0$ if $i=1$ or $n=0$ (or both), as $\underline{K}(\C)$ vanishes at these indices.  Consider the six-term exact sequences in $K$-theory induced by the short exact sequence in line \eqref{dd ses} and its tensor product with $A$.  As all maps on $K$-theory associated to these short exact sequences are induced by elements of $\Lambda$, we get a commutative diagram
$$
\xymatrix{ \cdots \ar[r] & K_0(A) \ar[r]^-{\rho_m} \ar[d]^-{\phi_{0,1}} & K_0(A;\Z/n) \ar[r]^-{\beta_m} \ar[d]^-{\phi_{0,n}} & K_1(A) \ar[d]^-{\phi_{1,1}} \ar[r] & K_1(A)\ar[r] \ar[d]^-{\phi_{1,1}} & \cdots \\
\cdots \ar[r] & K_0(\C) \ar[r]^-{\rho_m} & K_0(\C;\Z/n) \ar[r]^-{\beta_m} & K_1(\C) \ar[r] & K_1(\C)\ar[r] & \cdots }
$$
(here the maps $\rho_m$ and $\beta_m$ are induced by the $KK$-elements in Examples \ref{lambda ex} parts \eqref{rhom} and \eqref{betam}, and we draw the six-term exact sequences `unrolled' to fit better on the page).  Inserting the known values of the groups on the bottom row, and the fact that the morphism $\phi_{0,1}$ is zero, this becomes
$$
\xymatrix{ \cdots \ar[r] & K_0(A) \ar[r]^-{\rho_m} \ar[d]^-{0} & K_0(A;\Z/n) \ar[r]^-{\beta_m} \ar[d]^-{\phi_{0,n}} & K_1(A) \ar[d]^-{0} \ar[r] & K_1(A)\ar[r] \ar[d]^-{0} & \cdots \\
\cdots \ar[r] & \Z \ar[r] & \Z/n \ar[r] & 0 \ar[r] & 0  \ar[r] & \cdots }.
$$
Using exactness, $\phi_{0,n}$ uniquely determines, and is uniquely determined by, a map from the cokernel of $\rho_n$ to $\Z/n$, i.e.\ according to Lemma \ref{rhom coker}, a map
$$
\psi_n:\text{Tor}(\Z/n,K_1(A))\to \Z/n.
$$
These are the maps from the statement.  As $\phi_{0,n}$ are the only non-zero homomorphisms amongst those constituting $\phi$, it is clear that the map $\epsilon:\phi\mapsto (\psi_m)_{m\geq 2}$ is an injective homomorphism.

It remains to check the surjectivity statement and do the claimed computations with the $\kappa$ morphisms.  Let then $(\psi_m)_{m\geq 2}$ be a family such that the diagram in line \eqref{kap tor} commutes, and let $\phi_{0,n}:K_0(A;\Z/n)\to K_0(\C;\Z/n)$ be the maps defined as the composition 
$$
\xymatrix{ K_0(A;\Z/m) \ar[r]^-{\beta_m} & \text{Tor}(\Z/m,K_1(A)) \ar[r]^-{\psi_m} & \Z/m=K_0(\C;\Z/m) }
$$
(we are abusing notation: the map labeled ``$\beta_m$'' is as usual the map induced on $K$-theory by $\beta_m$, and then corestricted to its image).  We must show that the maps $\phi_{0,n}$ are compatible with the generators of $\Lambda$ described in Examples \ref{lambda ex}.  Indeed, compatibility with $\rho_m$ and $\beta_{m,n}$ (which factors through $\rho_m$) follows from the fact that $\phi_{0,n}$ descends to the cokernel $\text{Tor}(\Z/m,K_1(A))$ of the map induced on $K$-theory by $\rho_m$.  

It suffices now to check compatibility with the maps $\kappa_{mn,n}$ and $\kappa_{n,mn}$ from part \eqref{kap mn n}.  For $\kappa_{mn,n}$ this from Lemma \ref{kap lem} and as we computed in Examples \ref{lambda ex} part \eqref{kap mn n} that the map $K_0(\C;\Z/n)\to K_0(\C;\Z/(mn))$ induced by $\kappa_{mn,n}$ is the canonical inclusion sending $[1]$ to $[m]$.  Similarly, for $\kappa_{n,mn}$, this follows from Lemma \ref{kap lem} and our computation from Examples \ref{lambda ex} part \eqref{kap mn n} that the map $K_0(\C;\Z/(mn))\to K_0(\C;\Z/n)$ induced by $\kappa_{n,mn}$ is the canonical surjection sending $[1]$ to $[1]$.
\end{proof}

We are now ready to state the main result of this section.  For the proof, it will be convenient to write $J$ for the directed set 
\begin{equation}\label{j set}
J:=\{n\in \N\mid n\geq 2\}
\end{equation}
where the order is not the usual one, but instead we define ``$n\leq m$'' to mean ``$n$ divides $m$''.  

\begin{theorem}\label{totk}
For any separable $C^*$-algebra $A$ there is a commutative diagram of continuous maps of topological abelian groups
\begin{equation}\label{definitive}
\xymatrix{ 0 \ar[r] & k^0(A) \ar[r] \ar[d]^-\delta & K^0(A) \ar[d]^-{\underline{\gamma}} \ar[r]^-\gamma & \text{Hom}(K_0(A),\Z) \ar@{=}[d] & \\
0\ar[r] & \text{Hom}(\tor K_1(A),\Q/\Z)\ar[r] & \text{Hom}_{\Lambda}(\underline{K}(A),\underline{K}(\C)) \ar[r]^-{f_\Lambda} & \text{Hom}(K_0(A),\Z) \ar[r] &  0 }
\end{equation}
where $\delta$ is as in Definition \ref{sec pair def}, $\gamma$ is as in line \eqref{gamma pair}, $\underline{\gamma}$ is as in line \eqref{under gamma}, and $f_\Lambda$ is as in line \eqref{flambda}.  Moreover, the rows are exact, and the whole diagram is natural for morphisms in the $KK$-category.
\end{theorem}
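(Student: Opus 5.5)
We build the bottom row from Lemma \ref{annoying algebra} and then identify $\delta$ with the restriction of $\underline{\gamma}$.

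\textbf{Bottom row.} The key step is an isomorphism
$$
\ker(f_\Lambda)\cong \text{Hom}(\tor K_1(A),\Q/\Z).
$$
By Lemma \ref{annoying algebra}, $\ker(f_\Lambda)$ is the group of compatible families $(\psi_m)$ with $\psi_m:\text{Tor}(\Z/m,K_1(A))\to \Z/m$.

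\begin{itemize}
\item Using the directed set $J$ from line \eqref{j set}, we have $\tor K_1(A)=\varinjlim_{m\in J}\text{Tor}(\Z/m,K_1(A))$ along the inclusions $\kappa_{mn,m}$. We also have $\Q/\Z=\varinjlim \Z/m$ along the maps $[1]\mapsto[n]$, where $\Z/m\cong \frac1m\Z/\Z$.
\item The left square of \eqref{kap tor} says exactly that $(\psi_m)$ is a map of directed systems. It therefore assembles to a unique homomorphism $\psi:\tor K_1(A)\to\Q/\Z$.
\item Conversely, any $\psi$ maps $m$-torsion into $\frac1m\Z/\Z$, so it restricts to a family $(\psi_m)$.
\item We must check that the right square of \eqref{kap tor} then holds automatically. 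The map $\kappa_{n,mn}$ is multiplication by $m$ on $\text{Tor}$, and $\frac1{mn}\Z/\Z\to\frac1n\Z/\Z$, $[1]\mapsto[1]$, is also multiplication by $m$. So the square commutes.
\end{itemize}

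This gives the isomorphism. It is a homeomorphism for the pointwise topologies, since both topologies are determined by evaluations at elements of $\text{Tor}(\Z/m,K_1(A))$. Surjectivity of $f_\Lambda$ needs a separate argument. Given $h:K_0(A)\to\Z$, I would set $\phi_{0,1}=h$ and $\phi_{1,1}=0$. I would then define $\phi_{0,n}$ on $K_0(A;\Z/n)$ by applying $h\otimes\Z/n$ on the image of $\rho_n$, and extend it to all of $K_0(A;\Z/n)$. Alternatively, I would realize $h$ by an actual $K$-homology class of an auxiliary UCT algebra mapping to $A$. This step is the one I find least clear. My first attempt would be: choose a UCT algebra $B$ with a $KK$-map $B\to A$ inducing an isomorphism on $K_0$ modulo torsion. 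Failing that, I would construct the $\phi_{0,n}$ compatibly via a limit argument over $J$, using divisibility of $\Q/\Z$ to extend the maps.

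\textbf{Top row and the square with $\delta$.} The top row is exact by definition, with $k^0(A)=\ker\gamma$, and the right square commutes by \eqref{gfl}. Hence $\underline{\gamma}$ maps $k^0(A)$ into $\ker f_\Lambda$. It remains to show that, under the isomorphism above, $\underline{\gamma}(\alpha)$ corresponds to $\delta(\alpha)$. This is the main obstacle.

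\begin{itemize}
\item The map $\kappa_{n}:I_n\to C(\iota_Q)$, induced by a unital inclusion $M_n\to Q$, gives a map of the extensions \eqref{dd ses} and \eqref{qz ses}. On $K_1$ it gives the inclusion $\Z/n\to\Q/\Z$, since on the ideals it is $\Z\to\Q$, $1\mapsto 1/n$.
\item Naturality of the Kasparov product with $\alpha$ gives a commutative ladder between the $\Z/n$-coefficient and $\Q/\Z$-coefficient long exact sequences. This ladder is compatible with the boundary maps to $K_1(A)$.
\item Hence, for $x\in\text{Tor}(\Z/n,K_1(A))$, lift $x$ to $K_0(A;\Z/n)$, push it into $K_0(A;\Q/\Z)$, and apply $\alpha$. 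This yields both $\delta(\alpha)(x)$ and the image of $\psi_n(x)$ in $\Q/\Z$.
\end{itemize}

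\textbf{Remaining checks.} Continuity of $\delta$ and $\underline{\gamma}$ follows from continuity of the Kasparov product \cite[Theorem 3.5]{Dadarlat:2005aa}. Naturality in the $KK$-category follows from associativity of the Kasparov product, as in Proposition \ref{nat}.
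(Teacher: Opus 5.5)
Apart from one step, your argument follows the paper's proof. This includes:
\begin{itemize}
\item the isomorphism $\ker(f_\Lambda)\cong\text{Hom}(\tor K_1(A),\Q/\Z)$ via Lemma \ref{annoying algebra} and a direct limit over $J$ (you are more explicit than the paper about why the right-hand square of \eqref{kap tor} holds automatically);
\item the comparison of $\underline{\gamma}(\alpha)$ with $\delta(\alpha)$ through the maps $I_n\to C(\iota_Q)$;
\item continuity and naturality from properties of the Kasparov product.
\end{itemize}

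The genuine gap is surjectivity of $f_\Lambda$. This is part of the claimed exactness of the bottom row, and you do not establish it. The paper does not argue it directly either; it cites \cite[Proposition 2.15]{Carrion:2020aa}. Two of your three suggestions would fail as stated.
\begin{itemize}
\item A $KK$-class $B\to A$ from a UCT algebra has the wrong variance. It induces $\text{Hom}_\Lambda(\underline{K}(A),\underline{K}(\C))\to\text{Hom}_\Lambda(\underline{K}(B),\underline{K}(\C))$, so it cannot produce a $\Lambda$-morphism out of $\underline{K}(A)$ lifting $h$.
\item Extending $h\otimes\Z/n$ off the image of $\rho_n$ separately for each $n$ gives no control over compatibility with $\kappa_{mn,n}$ and $\kappa_{n,mn}$. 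That compatibility is the whole content of the claim.
\end{itemize}

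Your third idea does work, but only if you make the extension once, at the $\Q/\Z$ level. Using \eqref{kcc} and injectivity of $\Q/\Z$, extend $h\otimes\text{id}_{\Q/\Z}$ from $K_0(A)\otimes\Q/\Z$ to a homomorphism $\Phi:K_0(A;\Q/\Z)\to\Q/\Z$. Let $j_n:K_0(A;\Z/n)\to K_0(A;\Q/\Z)$ be induced by $I_n\to C(\iota_Q)$. Put $\phi_{0,1}:=h$, $\phi_{0,n}:=\Phi\circ j_n$, and all other components zero. Then:
\begin{enumerate}
\item $\phi_{0,n}$ takes values in $\frac1n\Z/\Z\cong\Z/n$, since $KK(I_n,I_n)\cong\Z/n$ forces $nK_0(A;\Z/n)=0$.
\item Compatibility with $\rho_n$ holds because $j_n\rho_n(x)=x\otimes[1/n]$, on which $\Phi$ equals $[h(x)/n]$.
\item Compatibility with $\kappa_{mn,n}$ holds because $j_{mn}\circ\kappa_{mn,n}=j_n$. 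Both classes in $KK(I_n,C(\iota_Q))$ induce $[1]\mapsto[1/n]$ on $K_1$, and the UCT Ext term vanishes.
\item Compatibility with $\kappa_{n,mn}$ follows from $\kappa_{mn,n}\circ\kappa_{n,mn}=m\cdot 1$ in $KK(I_{mn},I_{mn})$. This gives $\phi_{0,n}\circ\kappa_{n,mn}=m\,\phi_{0,mn}$, which is exactly the bottom map $[1]\mapsto[1]$.
\item Compatibility with $\beta_n$ and $\rho_m\circ\beta_n$ reduces to $h\circ\beta_n=0$ on $K_1(A;\Z/n)$. This holds because that image is torsion and $h$ is $\Z$-valued. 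All other instances are trivial because $\underline{K}(\C)$ vanishes in every degree $(1,n)$.
\end{enumerate}
By Examples \ref{lambda ex}, these elements generate $\Lambda$, so this gives a preimage of $h$ under $f_\Lambda$. The non-canonical choice of $\Phi$ also explains why the splitting is not natural.
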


\begin{remark}
There is also an `odd degree' version of Theorem \ref{totk} that we will not need, so do not state explicitly.  It can be deduced by replacing $A$ with its suspension.
\end{remark}

\begin{proof}[Proof of Theorem \ref{totk}]
The maps $\gamma$ and $\underline{\gamma}$ are continuous by continuity of the Kasparov product \cite[Theorem 3.5]{Dadarlat:2005aa}.  The map $f_\Lambda$ is continuous as the topologies on its domain and codomain are both topologies of pointwise convergence, and it is surjective by \cite[Proposition 2.15]{Carrion:2020aa}\footnote{The cited result is more general as it allows for an arbitrary separable $C^*$-algebra in place of $B$, and actually gives a split surjection.  The necessary surjectivity claim can be proved slightly more simply in the current context, but the algebra is still a little bit fiddly: this can probably be attributed to the fact that while $f_\Lambda$ splits, it does not do so naturally.}.   The right hand square in line \eqref{definitive} commutes by line \eqref{gfl}.  

It remains to construct a homomorphism 
$$
\text{Hom}(\tor K_*(A),\Q/\Z) \to \text{Hom}_{\Lambda}(\underline{K}(A),\underline{K}(\C))
$$
and show that it is continuous, and that the left hand square commutes.  We will actually construct a map from the kernel of $\Lambda_f$ to $\text{Hom}(\tor K_*(A),\Q/\Z)$, and show that it is a bijective homeomorphism.  

Let $\phi\in \text{Hom}_{\Lambda}(\underline{K}(A),\underline{K}(\C))$ be an element such that $\Lambda_f(\phi):K_0(A)\to \Z$ is zero.  For each $n\geq 2$, let 
$$
\psi_n:\text{Tor}(\Z/n,K_1(A))\to \Z/n
$$
be the homomorphism guaranteed by Lemma \ref{annoying algebra}.  Using commutativity of the first square in diagram in line \eqref{kap tor}, our maps $\psi_n$ thus fit into a commutative diagram
\begin{equation}\label{div}
\xymatrix{ \text{Tor}(\Z/n,K_1(A)) \ar[r]^-{\psi_n} \ar[d]_-{\kappa_{mn,n}} & \Z/n \ar[d]^-{\kappa_{mn,n}} \\
\text{Tor}(\Z/(mn),K_1(A)) \ar[r]^-{\psi_{mn}} & \Z/(mn) }
\end{equation}
for each $m,n$, where the vertical maps are those induced by the morphisms of Examples \ref{lambda ex} part \eqref{kap mn n}, and are just the canonical inclusions (see Lemma \ref{kap lem} and the computations in Examples \ref{lambda ex} part \eqref{kap mn n}).   Let now $J$ be as in line \eqref{j set}, and use the maps $\kappa_{mn,n}$ to make the collections $(\Z/n)_{n\in J}$ and $(\text{Tor}(\Z/n,K_1(A)))_{n\in J}$ into directed systems.  As the diagrams in line \eqref{div} commute, taking direct limits gives a map $\psi:\tor K_1(A)\to \Q/\Z$.  This is our desired element of $\text{Hom}(\tor K_1(A),\Q/\Z)$ as in the statement.  

At this point we have a well-defined map
\begin{equation}\label{desid}
\text{ker}(f_\Lambda)\to \text{Hom}(K_1(A),\Q/\Z),\quad (\phi_{i,n})_{i\in \Z/2,n\in \N}\mapsto \psi,
\end{equation}
which one directly checks is a homomorphism.  Lemma \ref{annoying algebra} implies that it is bijective (we use here that the canonical maps $\text{Tor}(\Z/n,K_1(A))\to \tor K_1(A)$ and $\Z/n\to \Q/\Z$ are injective for all $n$).   The map in line \eqref{desid} is moreover a homeomorphism as the topologies are those of pointwise convergence.

Finally, we check commutativity of the left hand square in the diagram in the statement.  Note first that that the mapping cone $C(\iota_Q)$ of the inclusion $\iota_Q:\C\to Q$ is the direct limit over the directed set $J$ (see line \eqref{j set}) of the mapping cones $C(\iota_n)$ of the inclusions $\iota_n:\C\to M_n(\C)$.  Let then $\alpha\in k^0(A)$.  Then the map 
$$
K_0(A;\Q/\Z)\to K_0(\C;\Q/\Z)=\Q/\Z
$$
induced by $\alpha$ is the direct limit over $J$ of the maps 
$$
K_0(A;\Z/n)\to K_0(\C;\Z/n)=\Z/n
$$
induced by $\alpha$.  Comparing the image of $\underline{\gamma}(\alpha)$ under the map in line \eqref{desid} to the map $\delta(\alpha)$ from Definition \ref{sec pair def}, we see that this implies they are the same.
\end{proof}

Under the presence of the UCT, the secondary pairing has particularly good properties.  

\begin{corollary}\label{uct pair}
Let $A$ be a separable $C^*$-algebra that satisfies the UCT.  Then the secondary pairing map
$$
\delta:kl^i(A)\to \text{Hom}(\tor K_{i+1}(A),\Q/\Z)
$$
is a topological isomorphism for $i\in \{0,1\}$.  

Moreover, for $i\in \{0,1\}$ the map $\delta$ identifies $kl^i(A)$ isomorphically and homeomorphically with the Pontrjagin dual of $\tor K_{i+1}(A)$.   In particular, if $K^*(A)$ is finitely generated, then the torsion subgroup of $K^i(A)$ identifies naturally with the Pontrjagin dual of the torsion subgroup of $K_{i+1}(A)$.
\end{corollary}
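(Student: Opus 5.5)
Pass to the quotient in Theorem \ref{totk} and feed in the universal multicoefficient theorem. We do the case $i=0$ in detail; the case $i=1$ follows by replacing $A$ with $SA$, which still satisfies the UCT, and using $kl^1(A)=kl^0(SA)$ and $K_{j}(SA)=K_{j+1}(A)$, together with naturality of $\delta$ (Proposition \ref{nat}).

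\textbf{Step 1: descend to $KL$.} All maps in the diagram \eqref{definitive} are continuous and their targets are Hausdorff. Hence $\underline{\gamma}$, $\gamma$ and $\delta$ factor through $KL^0(A)$ and $kl^0(A)$, and we get the same diagram with $K^0(A)$ and $k^0(A)$ replaced by $KL^0(A)$ and $kl^0(A)$. Here $kl^0(A)$ is the kernel of $\gamma$ on $KL^0(A)$.

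\textbf{Step 2: invoke Dadarlat--Loring and Dadarlat.} Since $A$ satisfies the UCT, the UMCT of \cite{Dadarlat:1996aa} says that $\underline{\gamma}:KK(A,\C)\to\text{Hom}_\Lambda(\underline{K}(A),\underline{K}(\C))$ is surjective with kernel $\text{Pext}(K_{*+1}(A),K_*(\C))$. Dadarlat \cite[Section 5]{Dadarlat:2005aa} identifies this kernel with the closure of zero. So $\underline{\gamma}$ induces a continuous bijection $KL^0(A)\to\text{Hom}_\Lambda(\underline{K}(A),\underline{K}(\C))$. Dadarlat also shows that for UCT algebras this bijection is a homeomorphism: the $KL$ topology is the topology of pointwise convergence on total $K$-theory. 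Both rows of the $KL$ diagram are exact and the right vertical map is the identity, so a diagram chase shows $\delta:kl^0(A)\to\text{Hom}(\tor K_1(A),\Q/\Z)$ is a bijection. In that diagram, the bottom left map is a homeomorphism onto $\ker f_\Lambda$, which is closed. Restricting the homeomorphism $\underline{\gamma}$ to the closed subgroup $kl^0(A)$ then gives that $\delta$ is a homeomorphism.

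\textbf{Main obstacle.} The delicate point is the topological half of Step 2: we need exactly that Dadarlat's topology on $KL(A,\C)$ agrees, under $\underline{\gamma}$, with pointwise convergence on $\text{Hom}_\Lambda$. If this is not available as stated, I would argue directly. $\delta$ is a continuous bijection. The group $\text{Hom}(\tor K_1(A),\Q/\Z)$ is compact, and $kl^0(A)$ is complete and Hausdorff. One then shows openness of $\delta$ using Dadarlat's description of neighbourhoods of zero via vanishing on finitely many elements of total $K$-theory.

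\textbf{Pontrjagin duality.} $\tor K_1(A)$ is a countable discrete torsion group, so every character lands in $\Q/\Z\subseteq\R/\Z$. Thus $\text{Hom}(\tor K_1(A),\Q/\Z)$ with the pointwise topology is exactly its Pontrjagin dual, which is compact. Now suppose $K^*(A)$ is finitely generated. Then $\text{Hom}(K_0(A),\Z)$ is torsion-free, so $\gamma$ kills the torsion of $KL^0(A)$. Hence $\tor KL^0(A)\subseteq kl^0(A)$, and $kl^0(A)$ is dual to the torsion group $\tor K_1(A)$. That dual is finite, so $kl^0(A)=\tor KL^0(A)$. Finally, $K^0(A)$ is finitely generated, hence Hausdorff, so $KL^0(A)=K^0(A)$. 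This identifies $\tor K^0(A)$ naturally with the dual of $\tor K_1(A)$.
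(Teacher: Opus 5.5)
Your proposal is essentially the paper's proof. You descend the diagram of Theorem \ref{totk} to $KL$, use that $\underline{\gamma}:KL^0(A)\to\text{Hom}_{\Lambda}(\underline{K}(A),\underline{K}(\C))$ is a topological isomorphism for UCT algebras, finish with a five-lemma style chase, and suspend for the other degree. The topological isomorphism is exactly \cite[Theorem 4.1]{Dadarlat:2005aa}, which the paper cites, so the fallback in your ``main obstacle'' paragraph is unnecessary.

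Two small steps need a line of justification:
\begin{enumerate}
\item[(1)] The pointwise topology on $\text{Hom}(\tor K_1(A),\Q/\Z)$ is built from the discrete topology on $\Q/\Z$. Its agreement with the Pontrjagin dual topology is not automatic, and the paper checks it. Any finite subset of $\tor K_1(A)$ lies in a finite subgroup of some order $N$, so every character sends it into $\frac{1}{N}\Z/\Z$, which is discrete in $\R/\Z$.
\item[(2)] Your claim ``that dual is finite'' should be argued. The group $kl^0(A)$ is compact, and as a subgroup of the finitely generated group $KL^0(A)$ it is countable. A countable compact Hausdorff group is finite by the Baire category theorem.
\end{enumerate}
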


Before proving this, let us note that related results have been known for a long time.  For example, in \cite[pages 62-3]{Brown:1984rx}, Brown essentially shows that for a nuclear, quasidiagonal, UCT $C^*$-algebra, one has a natural isomorphism 
$$
kl^1(A)\cong \text{Ext}(\tor K_0(A),\Z). 
$$
Thanks to the isomorphism of line \eqref{ext def 3} above, this is closely related to the result of Corollary \ref{uct pair}.

\begin{proof}[Proof of Corollary \ref{uct pair}]
Looking at the diagram in the statement of Theorem \ref{totk}, the two left vertical maps factor through $KL$-theory, so we get a commutative diagram
$$
\xymatrix{ 0 \ar[r] & kl^0(A) \ar[r] \ar[d]^-\delta & KL^0(A) \ar[d]^-{\underline{\gamma}} \ar[r]^-\gamma & \text{Hom}(K_0(A),\Z) \ar@{=}[d] & \\
0\ar[r] & \text{Hom}(\tor K_1(A),\Q/\Z)\ar[r] & \text{Hom}_{\Lambda}(\underline{K}(A),\underline{K}(\C)) \ar[r]^-{f_\Lambda} & \text{Hom}(K_0(A),\Z) \ar[r] &  0 }.
$$
The UCT implies that $\underline{\gamma}$ is a topological isomorphism by \cite[Theorem 4.1]{Dadarlat:2005aa}, and (therefore, or directly) that the map $\gamma$ is surjective.  Hence $\delta:kl^0(A)\to \text{Hom}(\tor K_1(A),\Q/\Z)$ is a topological isomorphism by the five lemma.  The statement in the other degree follows on replacing $A$ with its suspension.

To see the statement about Pontrjagin duals, recall that the Pontrjagin dual of the discrete group $\tor K_i(A)$ is the group $\text{Hom}(\tor K_i(A),\R/\Z)$, equipped with the topology of pointwise convergence when $\R/\Z$ has its usual topology.  However, as $\tor K_i(A)$ is a torsion group, any homomorphism $K_i(A)\to \R/\Z$ takes values in the torsion subgroup $\Q/\Z$ of $\R/\Z$, so the canonical inclusion 
\begin{equation}\label{qz to rz}
\text{Hom}(\tor K_i(A),\Q/\Z)\to \text{Hom}(\tor K_i(A),\R/\Z)
\end{equation}
is an isomorphism.  As the topology of pointwise convergence on $\text{Hom}(\tor K_i(A),\Q/\Z)$ is defined using the discrete topology on $\Q/\Z$, and not the topology it inherits from $\R/\Z$, it is not immediate that the map in line \eqref{qz to rz} is a homeomorphism.  However, this is indeed the case: the point is that any finite subset $F$ of $\tor K_i(A)$ is contained in a finite subgroup, say $H$.  Let $N=|H|$ be the cardinality of $H$, whence any homomorphism $\tor K_i(A) \to \Q/\Z$ must take $F$ into the finite subgroup $\Z/N\cong (\Z\cdot (1/N))/\Z$ of $\Q/\Z$; as $(\Z\cdot (1/N))/\Z$ inherits the discrete topology from $\R/\Z$, the inclusion in line \eqref{qz to rz} is indeed a homeomorphism.

The second result follows as finite generation of $K^*(A)$ (or just countability: see for example \cite[Corollary 7.9]{Willett:2020aa}) implies that $K^*(A)=KL^*(A)$, even without the UCT.  Moreover if we assume the UCT exact sequence 
$$
0\to \text{Ext}(K_{i+1}(A),\Z)\to K^i(A)\to \text{Hom}(K_i(A),\Z)\to 0
$$
holds for $i\in \{0,1\}$, then finite generation of $K^*(A)$ (plus a little algebra) implies that $kl^*(A)$ is exactly the torsion subgroup of $KL^*(A)$.
\end{proof}

We give one more corollary about the structure of $K$-homology, purely for interest.  This was already essentially known a long time ago (see \cite[Section 5, page 977]{Brown:1973aa}\footnote{More precisely, this reference covers the commutative case, which is equivalent to knowing the result in the UCT case by continuity of the Kasparov product.}), but with a different approach.

\begin{corollary}[Brown-Douglas-Fillmore]\label{max cpt}
Let $A$ be a separable $C^*$-algebra satisfying the UCT.  Then for $i\in \{0,1\}$, $kl^i(A)$ is the unique maximal compact subgroup of $KL^i(A)$, and $k^i(A)$ is the unique maximal compact subgroup of $K^i(A)$.
\end{corollary}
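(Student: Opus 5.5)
We first treat $KL^i(A)$. By Corollary \ref{uct pair}, $kl^i(A)$ is topologically isomorphic to $\text{Hom}(\tor K_{i+1}(A),\Q/\Z)$ with the topology of pointwise convergence. This is the Pontrjagin dual of a discrete group, so it is compact.

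For maximality, let $C\subseteq KL^i(A)$ be any compact subgroup. The primary pairing $\gamma:KL^i(A)\to \text{Hom}(K_i(A),\Z)$ is continuous, so $\gamma(C)$ is a compact subgroup of $\text{Hom}(K_i(A),\Z)$ with the pointwise topology. Fix $y\in K_i(A)$. Evaluation at $y$ is continuous into the discrete group $\Z$, so the image of $\gamma(C)$ under it is a compact, hence finite, subgroup of $\Z$. The only such subgroup is $\{0\}$. Hence $\gamma(C)=0$, i.e.\ $C\subseteq kl^i(A)$. So $kl^i(A)$ contains every compact subgroup, and it is compact itself. It is therefore the unique maximal compact subgroup.

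Now treat $K^i(A)$. The subgroup $k^i(A)$ is the preimage of $kl^i(A)$ under the quotient map $q:K^i(A)\to KL^i(A)$, since $\gamma$ factors through $q$. The same evaluation argument applied to $\gamma:K^i(A)\to\text{Hom}(K_i(A),\Z)$ shows that every compact subgroup of $K^i(A)$ lies in $k^i(A)$. It remains to show that $k^i(A)$ is compact. Here $q$ is the quotient by $N:=\overline{\{0\}}$, and $N$ is contained in $k^i(A)$. The key observation is that $N$ is the closure of a point, so every open set meeting $N$ contains $0$. Consequently the open sets of $K^i(A)$ are exactly the preimages of open sets of $KL^i(A)$: an open $U$ containing $x$ contains $x+N$, so $U=U+N$ is saturated. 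Thus $q$ pulls back open covers, and an open cover of $k^i(A)=q^{-1}(kl^i(A))$ can be refined by saturated opens. Since $kl^i(A)$ is compact, a finite subcover exists. So $k^i(A)$ is (quasi-)compact in its non-Hausdorff topology.

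The main obstacle is this last step: checking that every open set in a non-Hausdorff topological group is saturated for the closure of zero, and being clear that ``compact'' here means quasi-compact, since $K^i(A)$ need not be Hausdorff. Everything else reduces to Corollary \ref{uct pair} and the discreteness of $\Z$.
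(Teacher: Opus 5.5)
Your proof is correct and follows the same route as the paper. You get compactness of $kl^i(A)$ from Corollary \ref{uct pair} and maximality from the fact that compact subgroups must vanish under $\gamma$; your evaluation-into-discrete-$\Z$ argument makes that step explicit. You then pass to $K^i(A)$ via the closure of zero. Your saturation argument is a fleshed-out version of what the paper compresses into the remark that $\overline{\{0\}}$ carries the indiscrete topology. The passage from ``every open set meeting $N$ contains $0$'' to ``$x+N\subseteq U$'' needs a routine use of inversion and translation invariance. Also, no refinement of the cover is actually needed, since every open set is already saturated.
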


\begin{proof}
Corollary \ref{uct pair} identifies $kl^i(A)$ with the Pontrjagin dual of a discrete group, whence it is compact.  To show that it is the unique maximal compact subgroup, it suffices to show that it contains any other compact subgroup of $KL^i(A)$.  This follows as any compact subgroup must be contained in the kernel of the map $\gamma:KL^i(A)\to \text{Hom}(K_i(A),\Z)$.  The statement for $K^i(A)$ follows as the kernel of the canonical quotient maps $K^i(A)\to KL^i(A)$ and $k^i(A)\to kl^i(A)$ are both the closure of zero, which has the indiscrete topology.
\end{proof}

It would be interesting to know if this holds without the UCT.  It remains true in general that $k^i(A)$ contains any compact subgroup of $K^i(A)$, but we do not know if $k^i(A)$ itself is compact.

\section{Relative eta invariants}\label{eta sec}

In this section we relate the secondary pairing of Definition \ref{sec pair def} to relative eta invariants in the sense of Atiyah-Patodi-Singer \cite{Atiyah:1975aa}.  The main ideas come from work of Antonini-Azzali-Skandalis \cite{Antonini:2014aa}: our results here mainly consist largely of fitting their ideas into our context, and we do not claim much originality.  



First we recall some background on flat bundles: we use \cite{Steenrod:1951aa} as a general reference; see also for example \cite[Section 3.11.1]{Gilkey:1994aa} for some discussion in the manifold case in terms in terms of vanishing curvature.  Let $X$ be a Hausdorff topological space.  Recall that a complex vector bundle\footnote{This description more properly defines a \emph{Hermitian} vector bundle because we are using $U_n(\C)$ rather than $GL_n(\C)$, but the two notions are equivalent up to isomorphism: see for example \cite[Corollary 12.8]{Steenrod:1951aa}.} of rank $n$ over $X$ can be described by the following data: 
\begin{itemize}
\item an open cover $\{U_i\}_{i\in I}$ of $X$; 
\item for each $(i,j)\in I\times I$ such that $U_i\cap U_j\neq \varnothing$, a continuous function $g_{ij}:U_i\cap U_j\to U_n(\C)$ such that whenever $U_i\cap U_j\cap U_k\neq \varnothing$, we have the \emph{cocycle identity}
\begin{equation}\label{cocycle}
g_{ij}g_{jk}=g_{ik}
\end{equation}
of functions $U_i\cap U_j\cap U_k\to U_n(\C)$.  
\end{itemize}
A \emph{flat (complex) vector bundle} of rank $n$ over $X$ is described in exactly the same way, with the additional assumption that the $g_{ij}$ are locally constant; equivalently, the $g_{ij}$ are continuous when we replace $U_n(\C)$ with $U_n(\C)_d$, which is the same underlying group equipped with the discrete topology.  There is a natural notion of equivalence for such bundles with structure group $U_n(\C)_d$, and we will consider flat bundles as equivalence classes of data as above: see \cite[2.8]{Steenrod:1951aa} and the discussion starting in the last paragraph of \cite[page 15]{Steenrod:1951aa}.

Assume now that $X$ is also path connected, locally path connected, and semi-locally simply connected: for example this holds if $X$ is a connected CW complex, or connected manifold, which are the only examples we will use.  Then \cite[13.9]{Steenrod:1951aa} gives a one-to-one correspondence between equivalence class of flat complex vector bundles over $X$ of rank $n$ and equivalence classes of homomorphisms $\pi_1(X)\to U_n(\C)$ from the fundamental group of $X$ to $U_n(\C)$ modulo unitary equivalence.  This correspondence can be described concretely as follows.
\begin{itemize}
\item Given a homomorphism $\phi:\pi_1(X)\to U_n(\C)$, let $\widetilde{X}$ be the universal cover of $X$, and define the \emph{associated flat bundle}
\begin{equation}\label{ass flat}
V_\phi:=\widetilde{X}\times_\phi \C^n,
\end{equation}
where the right hand side means $\widetilde{X}\times \C^n$ modulo the equivalence relation $(gx,v)\sim (x,\phi(g)^{-1}v)$ for all $x\in \widetilde{X}$ and $v\in \C^n$.
\item Given an open cover $\{U_i\}_{i\in I}$ of $X$ and continuous functions $g_{ij}:U_i\cap U_j\to U_n(\C)_d$ satisfying the identity in line \eqref{cocycle}, let $\gamma:[0,1]\to X$ be a path representing an element of $\pi_1(X)$.  Choose elements $0=t_0<t_1<\cdots <t_N=1$ of $[0,1]$ such that each restriction $\gamma|_{[t_l,t_{l+1}]}$ has image entirely contained in some $U_{i(l)}$, and define 
$$
\phi:\pi_1(X)\to U_n(\C),\quad [\gamma]\mapsto g_{i(N)i(N-1)}(\gamma(t_{N}))\cdots g_{i(1)i(0)}(\gamma(t_1))
$$
(all the choices involved here only affect the outcome by conjugation by an element of $U_n(\C)$: compare \cite[13.4-13.6]{Steenrod:1951aa}).
\end{itemize} 

We will also briefly need flat bundles over $X$ whose fibre is a (topological) module $M$ over some (topological) ring other than $\C$.  The definition is essentially the same: let $G$ be the group of continuous module automorphisms of $M$ equipped with the discrete topology; a flat bundle then corresponds to a choice of an open cover $\{U_i\}_{i\in I}$ of $X$ and continuous (i.e.\ locally constant) maps $g_{ij}:U_i\cap U_j\to G$ satisfying the cocycle identity from line \eqref{cocycle}.  If $X$ is path connected, locally path connected, and semi-locally simply connected, then equivalence classes of flat bundles are again in one-to-one correspondence with homomorphisms $\pi_1(X)\to G$ up to inner automorphism.

We now recall some background on Dirac-type operators and eta invariants.  We refer to \cite[Section 11.1]{Higson:2000bs}, \cite[Section 3.3]{Berline:2004aa}, and \cite[Chapter 3]{Roe:1998ad} for general background on Dirac-type operators; we summarize this here for completeness.  Let $M$ be a closed Riemannian manifold.  Following \cite[Definition 11.1.2]{Higson:2000bs}, a \emph{Dirac bundle}\footnote{Also called a \emph{Clifford bundle} in many references, such as \cite[Definition 6.1]{Antonini:2014aa}.  It is also often assumed that a Dirac bundle is $\Z/2$-graded, but that is for the even parity case; we work in the odd parity case, so give an ungraded version.} over $M$ is a complex Hermitian vector bundle $S$ over $M$ equipped with a(n $\R$-linear) bundle map 
$$
c:T^*M\to \text{End}(S)
$$
such that for each $x\in M$ and $v\in T_x^*M$, the endomorphism $c(v)\in \text{End}(S_x)$ is skew-adjoint, and satisfies $c(v)^2=-\|v\|^21_{S_x}$.  A \emph{Dirac-type operator} $D$ is any first order, symmetric, partial differential operator acting on the space $C^\infty(M;S)$ of sections of a Dirac bundle $S$, and with the property that if $f\in C^\infty(M)$ and $M_f:C^\infty(M;S)\to C^\infty(M;S)$ is the corresponding multiplication operator, then 
$$
[D,M_f]s=c(df)s
$$  
for all $s\in C^\infty(M;S)$.  Given a Dirac bundle, one can always build a Dirac operator on it using an appropriate connection: see for example \cite[Proposition 3.42]{Berline:2004aa}.  A Dirac operator $D$ as we have defined it (in particular, $S$ is ungraded) canonically determines a $K$-homology class : see for example \cite[Chapter 10 and Section 11.1]{Higson:2000bs}.

Let then $D$ be a Dirac-type operator on a closed Riemannian manifold $M$.  We define $\eta(D)$ to be the Atiyah-Patodi-Singer eta invariant of $D$: the original reference for this is \cite{Atiyah:1975ab}, and a textbook exposition can be found for example in \cite[Section 1.13]{Gilkey:1994aa}.  The exact definition will not be important for us, but suffice to say that it is a measure of the spectral asymmetry of $D$, defined using a zeta function-type regularization.

Let now $V$ be a flat bundle and $D$ be a Dirac-type operator acting on a Dirac bundle $S$ over a closed manifold $M$.  We can then form the twisted operator $D_V$ acting on $S\otimes V$, which will still be a Dirac operator: compare for example \cite[Example 3.24]{Roe:1998ad} and \cite[Section 3.11.2]{Gilkey:1994aa}.  As in \cite[Lemma 3.11.1]{Gilkey:1994aa} the eta invariant $\eta(D_V)$ depends on $D$ and the flat structure of $V$; the notation is maybe a bit misleading as different flat structures on the same vector bundle $V$ will in general give different values for $\eta(D_V)$.   Given then two flat bundles $V$ and $W$ on $M$, the \emph{relative eta invariant} is by definition the equivalence class in $\R/\Z$ of the real number
\begin{equation}\label{rel eta}
\rho_{V,W}(D):=\frac{1}{2}\Big(\eta(D_{V})-\eta(D_{W})\Big)-\frac{1}{2}\Big(\text{dim}(\text{ker}(D_{V}))-\text{dim}(\text{ker}(D_{W}))\Big)
\end{equation}
in $\R/\Z$ (one takes the equivalence class mod $\Z$ as this has much better invariance properties).  In general, it depends on the flat structures on $V$ and $W$, not just on the underlying vector bundles.

We now recall some background on the work of Antonini-Azzali-Skandalis.  Let $M$ be a closed manifold, and let $B$ be a II$_1$-factor.  Following \cite[Section 2.3]{Antonini:2014aa}, elements of $K^1(M;\R/\Z)$ can be described as triples $(E,F,w)$, where $E$ and $F$ are finitely generated projective modules over $C(M)$ (i.e.\ modules of sections of vector bundles over $M$), and $w:E\otimes B\to F\otimes B$ is an isomorphism (of finitely generated, projective $B$-modules); the class of such a triple is called a \emph{relative class}.  

Assume now that $M$ is connected, and let $\Gamma=\pi_1(M)$.  Let $\pi:\Gamma\to U_n(\C)$ be a unitary representation with associated flat bundle $V_\pi$ as in line \eqref{ass flat} and let $V_n$ be the trivial (flat) bundle of the same dimension.  Write $E_\pi$ and $1_n$ for the (finitely generated, projective) $C(M)$-modules of sections  of $V_\pi$ and $V_n$ respectively.  Then as in \cite[Proposition 5.2]{Antonini:2014aa}, there is a II$_1$-factor $B$ and a class of isomorphisms $w:E_\pi\otimes B\to 1_n\otimes B$ such that the relative class $[E_\pi,1_n,w]$ in $K^1(M;\R/\Z)$ does not depend on the choices involved when constructing $B$ or $w$.  As in \cite[Definition 5.3]{Antonini:2014aa}, we define 
\begin{equation}\label{pi rz class}
[\pi]:=[E_\pi,1_n,w]\in K^1(M;\R/\Z).  
\end{equation}

Antonini-Azzali-Skandalis \cite[Proposition 6.3]{Antonini:2014aa} then reformulate an important result of Atiyah-Patodi-Singer \cite[Section 5]{Atiyah:1976aa} in this language.  

\begin{theorem}[Antonini-Azzali-Skandalis] \label{aas the}
Let $M$ be a closed, connected Riemannian manifold.  Let $D$ be a Dirac-type operator representing a class $[D]\in K_1(M)$.  Let $V,W$ be flat bundles over $M$ associated to representations $\pi_V,\pi_W:\pi_1(M)\to U_r(\C)$, and let $[\pi_V],[\pi_W]\in K^1(M;\R/\Z)$ be the associated classes as in line \eqref{pi rz class}.  Let $\rho_{V,W}(D)$ be as in line \eqref{rel eta}.  Then 
$$
\langle [D],[\pi_V]-[\pi_W]\rangle=\rho_{V,W}(D).\eqno\qed
$$
\end{theorem}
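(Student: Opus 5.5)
This is a citation of \cite[Proposition 6.3]{Antonini:2014aa}, which itself repackages \cite[Section 5]{Atiyah:1976aa}, so the plan is to reconstruct that reduction rather than to prove anything new. The pairing on the left is the natural one between $K_1(M)=KK^1(C(M),\C)$ and $K^1(M;\R/\Z)=K_0(C(M)\otimes C(\iota_B))$. It is given by Kasparov product of $[D]$ with the class, landing in $K_1(C(\iota_B))\cong\R/\Z$, which is the $\R/\Z$ analogue of diagram \eqref{kci} with $B$ in place of $Q$.

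\textbf{Step 1: compute the product via the mapping cone sequence.} The relative class $[E_\pi,1_n,w]$ is a path $t\mapsto$ (interpolation from the inclusion of $E_\pi$ at $t=0$ to $w$ at $t=1$) in $C(\iota_B)\otimes C(M)$. Pairing with $[D]$ therefore amounts to a spectral flow of $B$-valued Dirac operators: twist $D$ by the Mishchenko-type flat $B$-bundle, run along the path determined by $w$, and close up in $\C$ at the endpoint. The standard identification of $K_1(C(\iota_B))\to\R/\Z$ with ``$\tau$-valued spectral flow modulo the integer index at the endpoint'' then expresses $\langle[D],[\pi_V]-[\pi_W]\rangle$ as the class in $\R/\Z$ of the real number
\[
\mathrm{sf}_\tau\bigl(D_{V\otimes B}\ \text{to}\ D_{W\otimes B}\bigr),
\]
computed after trivializing both flat bundles in $B$ via $w$. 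Here $\tau$ is the trace on $B$.

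\textbf{Step 2: identify this spectral flow with eta invariants.} The flat $B$-bundles $V\otimes B$ and $\C^r\otimes B$ are isomorphic via $w$, and after that isomorphism the operators differ by a zeroth order term. Applying the variation formula for eta invariants along the path gives
\[
\mathrm{sf}_\tau=\tfrac12\bigl(\eta_\tau(\text{end})-\eta_\tau(\text{start})\bigr)
\]
up to kernel corrections and a local integral. The local integral vanishes because the connections are flat; this is the Atiyah--Patodi--Singer argument of \cite[Section 5]{Atiyah:1976aa}, adapted to II$_1$-coefficients. Using that $\tau$-eta invariants of operators with coefficients in $\C\subset B$ equal the ordinary ones, the difference becomes $\tfrac12(\eta(D_V)-\eta(D_W))$ minus half the kernel dimensions mod $\Z$, which is \eqref{rel eta}.

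\textbf{Main obstacle.} I expect Step 1 to be the hardest: matching sign and normalization conventions between the boundary map in \eqref{kci}, the spectral flow, and the $\tfrac12\dim\ker$ correction. I would first check the case $M=S^1$, with $D=-i\,d/d\theta$ and $V$ the flat line bundle of holonomy $e^{2\pi i a}$. There both sides can be computed explicitly and should give $a$ mod $\Z$, which fixes the conventions.
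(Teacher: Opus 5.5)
The paper gives no argument for this statement: it is quoted from \cite[Proposition 6.3]{Antonini:2014aa} without proof, so your opening citation is all the paper does. Your reconstruction of the cited argument, however, has a genuine gap in Step 2.

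\textbf{Step 2 fails as stated.} You claim the local term in the variation formula vanishes because both endpoint connections are flat. That is false. Two flat connections related by a non-flat gauge transformation $g$ have a closed but generally non-zero transgression form. In flat frames its degree-one part is $\tau\,\mathrm{tr}(g^{-1}dg)$, whose periods are $\tau$-winding numbers.

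Your own test case shows this, with no $B$ at all. On $S^1$, write the flat line bundle $V_a$ ($0<a<1$) in a topological trivialisation $\phi$, giving $D_0=-i\,d/d\theta+a$, and let $D_1=-i\,d/d\theta$. Both are flat and the linear path between them has integer spectral flow. With $\xi=\tfrac12(\eta+\dim\ker)$ one has $\xi(D_1)-\xi(D_0)=a$. So, modulo $\Z$, all of $a$ is carried by the local term.

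A more conceptual way to see it: replacing $w$ by $wv$ with $v\in U_r(C(M)\otimes B)$ shifts the relative class by the image of $[v]\in K_1(C(M)\otimes B)\cong K^1(M)\otimes\R$. This shifts the left-hand side by a real $\tau$-index, e.g.\ $\pm\tau(p)$ for $v=e^{i\theta}p+(1-p)$ on $S^1$. Your Step 2 (flat endpoints, $\tau$-eta invariants unchanged by conjugation by $w$, no local term) returns the same value for every $w$. For instance, with $w=\phi\otimes 1_B$ the relative class is zero, yet your recipe outputs $\rho_{V_a,\C}(D)\neq 0$.

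\textbf{What is missing.} The content lies in the particular isomorphism $w$ from \cite[Proposition 5.2]{Antonini:2014aa}. In outline, $w$ is built in two stages:
\begin{itemize}
\item it passes through a flat $B$-bundle $E_u$, with $u:\pi_1(M)\to U(B)$, for which $V\otimes E_u$ and $\C^r\otimes E_u$ are \emph{flatly} isomorphic;
\item it then trivialises $E_u$ by a non-flat isomorphism $v$. This is where \cite[Lemma 4.1]{Antonini:2014aa}, i.e.\ Lemma~\ref{fac triv}, enters.
\end{itemize}

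Now split the spectral flow through the operators twisted by $V\otimes E_u$ and $\C^r\otimes E_u$.
\begin{itemize}
\item The middle step contributes nothing, and $\eta_\tau(D_{V\otimes E_u})=r\,\eta_\tau(D_{E_u})=\eta_\tau(D_{\C^r\otimes E_u})$.
\item The two outer steps have local terms $\int_M\omega_D\wedge\mathrm{ch}(\nabla^V)\wedge\mathrm{CS}_\tau$ and $\int_M\omega_D\wedge\mathrm{ch}(\nabla^{\C^r})\wedge\mathrm{CS}_\tau$. Here $\omega_D$ is the local index density of $D$, and both involve the same $\mathrm{CS}_\tau=\mathrm{CS}_\tau(d,v^*\nabla^{E_u}v)$.
\end{itemize}
These cancel because both flat Chern character forms equal the constant $r$; this is where the equal-rank hypothesis is used. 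What survives is $\xi(D_V)-r\,\xi(D)$, up to sign.

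So the local terms do not vanish; they cancel, and only for this choice of $w$. This mechanism, not the sign conventions you flag, is the real obstacle in reconstructing the cited result.
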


We are now finally ready to relate the secondary pairing and the eta invariant.

\begin{corollary}\label{aas cor}
Let $M$ be a closed manifold, let $V$ and $W$ be flat vector bundles over $M$ of the same rank, let $D$ be a Dirac operator on some bundle over $M$ representing a torsion class in $K_1(M)$, and let $\rho_{V,W}(D)$ be the relative eta invariant of line \eqref{rel eta}.    Then
$$
\langle [D],[V]-[W]\rangle_{\text{II}}=\rho_{V,W}(D).
$$ 
\end{corollary}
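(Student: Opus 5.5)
The plan is to reduce Corollary \ref{aas cor} to Theorem \ref{aas the} via the $\R/\Z$-coefficient description of the secondary pairing in Remark \ref{rz rem}. Here $A=C(M)$ is commutative, so it is in the UCT class, and the remark lets us compute $\delta[D]$ with $\R/\Z$ coefficients. We may assume $M$ is connected; otherwise we work componentwise, since both sides are additive over components. We may also replace $W$ by the trivial bundle. Indeed, $[V]-[W]=([V]-[1_r])-([W]-[1_r])$ in $K^0(M)$. On the eta side, $\rho_{V,W}(D)=\rho_{V,1_r}(D)-\rho_{W,1_r}(D)$ directly from line \eqref{rel eta}. So it suffices to treat $W=1_r$, with $V=V_\pi$ for some $\pi:\pi_1(M)\to U_r(\C)$. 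Since $V$ is flat, $[V]-[1_r]$ is torsion in $K^0(M)$ (rational Chern character vanishes by Chern--Weil), so the left-hand side is defined.

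\textbf{Step 1.} Identify the class $[\pi]\in K^1(M;\R/\Z)=K_0(C(M)\otimes C(\iota_B))$ from line \eqref{pi rz class} as a lift of $[V]-[1_r]\in \tor K_0(C(M))$. The lift is taken under the boundary map $K_1(A;\R/\Z)\to K_0(A)$ of the coefficient long exact sequence, i.e.\ the front face of the cube in Remark \ref{rz rem}, with degrees shifted for the $k^1$ case. Concretely, the mapping cone sequence $0\to SB\to C(\iota_B)\to\C\to 0$, tensored with $C(M)$, gives the map $K_0(C(M)\otimes C(\iota_B))\to K_0(C(M))$ induced by evaluation at $1$. Under the standard relative-class picture of Antonini--Azzali--Skandalis, this map sends a triple $[E,F,w]$ to $[E]-[F]$. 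I would check this from their description of relative classes as elements of the $K$-theory of the mapping cone. It should be essentially by construction, with possibly a sign convention to track.

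\textbf{Step 2.} Apply the definition. By Remark \ref{rz rem}, for $[D]\in kl^1(C(M))$ the value $\delta[D]([V]-[1_r])$ is computed as follows: take any lift of $[V]-[1_r]$ to $K_1(C(M);\R/\Z)$ and apply the map to $K_1(\C;\R/\Z)=\R/\Z$ induced by $[D]$. Taking the lift $[\pi]$ from Step 1, this value is exactly the Kasparov pairing $\langle[D],[\pi]\rangle$. Theorem \ref{aas the} identifies that pairing with $\rho_{V,1_r}(D)$.

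\textbf{Main obstacle.} I expect two things to be delicate. The first is Step 1: matching conventions, namely that the AAS relative class maps to $[E_\pi]-[1_n]$ rather than its negative, and that their pairing $K_1(M)\times K^1(M;\R/\Z)\to\R/\Z$ agrees with the map induced on $K_*(\,\cdot\,\otimes C(\iota_B))$ by $[D]\otimes\mathrm{id}$ as used in Remark \ref{rz rem}. I would resolve this by unwinding AAS Section 3.4 and their definition of the pairing as a Kasparov product with the coefficient algebra. The second is a minor point: the hypothesis that $[D]$ is torsion is used to ensure $[D]\in k^1(C(M))$. A torsion class has zero image under $\gamma$, because $\text{Hom}(K_1(C(M)),\Z)$ is torsion-free. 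This is what makes the pairing well defined, independent of the choice of lift.
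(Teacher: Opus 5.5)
Your proposal is correct and is essentially the paper's proof. Both reduce to connected $M$, use the Antonini--Azzali--Skandalis classes in $K^1(M;\R/\Z)$ as lifts of the torsion class under the evaluation-at-$1$ map, compute $\langle [D],\cdot\rangle_{\text{II}}$ with $\R/\Z$ coefficients via Remark \ref{rz rem}, and apply Theorem \ref{aas the}. The differences are cosmetic:
\begin{itemize}
\item The paper skips your reduction to $W=1_r$ by using $[\pi_V]-[\pi_W]$ directly as the lift of $[V]-[W]$. If you keep the reduction, add one line: $\langle[D],[\pi_V]\rangle=\langle[D],[\pi_V]-[\pi_{1_r}]\rangle$, because $[\pi_{1_r}]$ maps to $0$ in $K^0(M)$ and so pairs trivially with $[D]\in k^1(C(M))$, by the same argument that makes the pairing independent of the lift. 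Then Theorem \ref{aas the} applies verbatim.
\item Since $[D]$ is odd, its pairing lands in $K_0(\C;\R/\Z)=\R/\Z$, not in $K_1(\C;\R/\Z)$, which is zero.
\end{itemize}
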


\begin{proof}
If $M$ has connected components $M_1,...,M_N$, then we have that 
$$
\langle [D],[V]-[W]\rangle_{\text{II}}=\sum_{i=1}^N \langle [D|_{M_1}],([V]-[W])|_{M_i}\rangle_{\text{II}}.
$$
Working one connected component at a time, we may thus assume that $M$ is connected.  

Let then $\pi_V,\pi_W:\Gamma\to U_r(\C)$ be representations such that $V$ and $W$ arise by the associated bundle construction as in line \eqref{ass flat} above.  Let $[\pi_V],[\pi_W]\in K^1(M;\R/\Z)$ be the classes constructed in line \eqref{pi rz class} above.  Let $B$ be the II$_1$ factor used to define $K$-theory with $\R$ and $\R/\Z$ coefficients, and let $\beta:K^1(M;\R/\Z)\to K^0(M)$ be the map on $K$-theory arising from the second arrow below 
$$
0\to C(M)\otimes SB \to C(Y)\otimes C(\iota_B)\to C(M)\to 0.
$$
Then by definition of relative $K$-theory, $\beta$ takes $[\pi_V]-[\pi_W]$ to $[V]-[W]$.  We thus have (by definition: see Remark \ref{rz rem}) that 
$$
\langle [D],[V]-[W]\rangle_{\text{II}}=\langle [D],[\pi_V]-[\pi_W]\rangle
$$
where the right hand side is the result of the primary pairing $K_1(M)\times K^1(M;\R/\Z)\to \R/\Z$.  Applying Theorem \ref{aas the}, we are done.
\end{proof}

\begin{remark}
It is a corollary of Corollary \ref{aas cor} that if $D$ is a Dirac type operator on $M$ such that the class $[D]\in K_1(M)$, then the eta invariant $\rho_{V,W}(D)$ depends only on the class of $[V]-[W]\in K^0(M)$, and in particular only on the isomorphism classes of $V$ and $W$ as \emph{topological} vector bundles.  This is in sharp contrast to the general case where $\rho_{V,W}(D)$ depends (strongly) on the isomorphism classes of $V$ and $W$ as \emph{flat} vector bundles.  For example, let $M=S^1$ and $D=-i\frac{d}{dx}$ be the usual Dirac operator.  Then for any $\theta \in \R/\Z$, \cite[Example 3.11.4]{Gilkey:1994aa} gives a flat structure on the trivial line bundle $\C$ over $S^1$ giving a flat bundle $V_\theta$; these bundles have the property that $\rho_{V_\theta,V_0}(D)=\theta$ (compare also the computations on \cite[pages 410-411]{Atiyah:1975aa}).  

Thus relative eta invariants of Dirac-type operators associated to torsion classes in $K$-homology are much more `robust' than those associated to non-torsion classes.
\end{remark}

Our next goal is to bootstrap Corollary \ref{aas cor} up to a version that works for a finite CW complex $Y$ in place of the manifold $M$.  The strategy will essentially be to reduce to the manifold case using the Baum-Douglas geometric geometric model for $K$-homology \cite{Baum:1980pt}; this is inspired by work of Higson-Roe \cite{Higson:2008qb}.

Our version of the Baum-Douglas model is that used by Higson-Roe in \cite[Section 3]{Higson:2008qb}, which is in turn based on work of Keswani \cite{Keswani:1999aa}.  To set up notation for this, let $Y$ be a finite CW complex.  Then an \emph{odd geometric $K$-cycle} for $X$ is a triple $(M,S,f)$ where: 
\begin{itemize}
\item $M$ is a closed, odd-dimensional (but possibly disconnected and with components of different dimensions) oriented Riemannian manifold; 
\item $S$ is a Dirac bundle on $M$; 
\item $f:M\to X$ is a continuous map.  
\end{itemize}
One can define an equivalence relation on such cycles (the details are not important to us, but see for example \cite[Section 3]{Higson:2008qb}) so that the resulting group is isomorphic to $K_1(Y)$.  Indeed, using the Dirac bundle $S$, one can build an associated Dirac type operator $D_S$ and therefore a $K$-homology class $[D_S]\in K_1(M)$.  The assignment 
\begin{equation}\label{geo an}
(M,S,f)\mapsto f_*[D_S]\in K_1(Y)
\end{equation}
then induces an isomorphism between the Baum-Douglas and Kasparov models of $K$-homology, as shown in \cite{Baum:2007ek}.

The following lemma is a mild generalization of \cite[Lemma 4.1]{Antonini:2014aa}.

\begin{lemma}\label{fac triv}
Let $Y$ be a finite connected CW complex with universal cover $\widetilde{Y}$.  Let $B$ be a II$_1$-factor, let $u:\pi_1(Y)\to U(B)$ be a homomorphism from the fundamental group of $Y$ to the unitary group of $B$, and let $E_u:=\widetilde{Y}\otimes_\Gamma B$ be the associated flat bundle over $Y$ with fibres $B$.  Then the class $[E_u]\in K_0(C(Y)\otimes B)$ is the same as the class $[1_B]$ of the trivial bundle $Y\times B$.
\end{lemma}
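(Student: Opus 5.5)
The plan is to trivialize the flat $B$-bundle $E_u$ by a homotopy, exploiting the fact that the unitary group $U(B)$ of a II$_1$-factor is path connected, and in fact contractible in the norm topology by Kuiper-type results for von Neumann algebras. At minimum, every unitary $v\in U(B)$ has the form $v=e^{ih}$ for some self-adjoint $h\in B$, by Borel functional calculus, so $t\mapsto e^{ith}$ is a norm-continuous path from $1$ to $v$.

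The key steps are as follows.

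\begin{enumerate}[(1)]
\item Describe $E_u$ by cocycle data. Since $Y$ is a finite CW complex, choose a finite open cover $\{U_i\}$ by contractible sets, with locally constant transition functions $g_{ij}:U_i\cap U_j\to U(B)$ determined by $u$. These values are products of elements $u(\gamma)$.

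\item Reduce to a 2-complex. Pass to a CW structure with one $0$-cell. It then suffices to show that the classifying map $Y\to BU(B)_d\to BU(B)$ is null-homotopic, where $BU(B)$ is the classifying space of the norm-topology group. The map $Y\to BU(B)$ factors through $B\pi_1(Y)$, and hence through the map $B\pi_1(Y)\to BU(B)$ induced by $u$.

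\item Show this map is null. If $U(B)$ is contractible in the norm topology, then $BU(B)$ is contractible, so every principal $U(B)$-bundle over $Y$ is trivial. In that case $E_u\cong Y\times B$ as Hilbert $B$-module bundles, and hence $[E_u]=[1_B]$ in $K_0(C(Y)\otimes B)$.
\end{enumerate}

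The contractibility of $U(B)$ for a II$_1$-factor in the norm topology is known (Popa--Takesaki, or Br\"uning--Willgerodt for properly infinite algebras; for the finite case one uses results of Popa--Takesaki). This is the main obstacle.

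If I want to avoid that input, I would fall back to a purely $K$-theoretic argument, which only needs the weaker fact that $K_1(C(Y)\otimes B)$ computations are controlled. The steps would be:

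\begin{enumerate}[(1)]
\item Use the UCT and K\"unneth formula to see that $K_0(C(Y)\otimes B)\cong K^0(Y)\otimes\R$, using that $K_0(B)=\R$ and $K_1(B)=0$. It then suffices to check that the class $[E_u]-[1_B]$ maps to zero under the Chern character into $H^{ev}(Y;\R)$.

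\item Compute this class cellularly: $[E_u]-[1_B]$ is determined by the restriction of $E_u$ to skeleta.

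\item On the 1-skeleton, a wedge of circles, a flat bundle is a clutching by unitaries $u(\gamma)$. The class it defines lies in $K^{-1}$ of a point paired with the classes $[u(\gamma)]\in K_1(B)=0$, so it vanishes.

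\item Higher cells contribute via the Chern character of a flat connection. This contribution is zero because the curvature vanishes, with the trace $\tau$ on $B$ used to define real-valued Chern--Weil forms in the manner of \cite[Lemma 4.1]{Antonini:2014aa}.
\end{enumerate}

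In the second approach, I expect the main obstacle to be making the trace-valued Chern--Weil argument rigorous for a merely continuous finite CW complex rather than a manifold. I would handle this by approximating $Y$ up to homotopy by a manifold with boundary (a regular neighbourhood in Euclidean space), pulling back the flat bundle, and applying the argument of \cite[Lemma 4.1]{Antonini:2014aa} there.
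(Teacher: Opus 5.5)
\textbf{The contractibility plan fails.} The unitary group of a II$_1$-factor is \emph{not} contractible in the norm topology. The de la Harpe--Skandalis determinant $\ell\mapsto\frac{1}{2\pi i}\int_0^1\tau(\ell'(t)\ell(t)^*)\,dt$ is invariant under homotopy of loops. The loop $t\mapsto e^{2\pi i t}p+(1-p)$ has determinant $\tau(p)$, so $\pi_1(U(B))$ surjects onto $\R$; this is $K_0(B)\cong\R$ showing up.

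The Popa--Takesaki contractibility result concerns the strong operator topology. Bundles with only strongly continuous transition functions do not define classes in $K_0(C(Y)\otimes B)$, which is built from norm-continuous data. So principal $U(B)$-bundles over $Y$ need not be trivial, and there is no a priori reason for the classifying map of $E_u$ to be null. The obstruction is detected by $\tau$-valued characteristic classes, which is exactly where flatness has to be used. Likewise, ``reduce to a 2-complex'' is not legitimate: the flat structure depends only on $\pi_1(Y)$, but the $K$-theory class of $E_u$ can a priori see all cells of $Y$.

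\textbf{The fallback works once streamlined.} Steps (2)--(3) can be dropped, since on a wedge of circles $K^0$ only records rank. The Chern--Weil vanishing for flat $B$-bundles is exactly what Lemma 4.1 of Antonini--Azzali--Skandalis provides, so you need not redo it. The real content is your last sentence, the reduction to manifolds:
\begin{itemize}
\item A finite CW complex is homotopy equivalent to a finite simplicial complex, hence to a regular neighbourhood $N\subset\R^n$ via some $h\colon N\to Y$.
\item $h^*$ is an isomorphism on $K_0(C(\cdot)\otimes B)$ by homotopy invariance, with $h^*E_u\cong E_{u\circ h_*}$ and $h^*1_B=1_B$.
\item The cited lemma is for \emph{closed} manifolds and $N$ has boundary, so finish by doubling. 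Let $i\colon N\to DN$ be the inclusion and $r\colon DN\to N$ the fold retraction. Then $r^*E_{u\circ h_*}$ is a flat $B$-bundle on the closed manifold $DN$, so $[E_{u\circ h_*}]=i^*r^*[E_{u\circ h_*}]=i^*[1_B]=[1_B]$, working componentwise if needed.
\end{itemize}
Rerunning Chern--Weil directly on $N$ would also work. But then you must check that the $\tau$-valued Chern--Weil character agrees with the topological Chern character under $K_0(C(N)\otimes B)\cong K^0(N)\otimes\R$.

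\textbf{Comparison with the paper.} The paper reduces to manifolds by duality instead. Since $K_0(C(Y)\otimes B)\cong K^0(Y)\otimes\R$ is finite-dimensional, it is detected by pairing with $K_0(Y)$. Every class there is $f_*$ of a Baum--Douglas cycle $(M,S,f)$. Naturality then moves the pairing to the closed manifold $M$ and the flat bundle $f^*E_u=E_{u\circ f_*}$, where the Antonini--Azzali--Skandalis lemma applies verbatim.

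That route lands directly on closed manifolds and fits the Baum--Douglas framework used later for relative eta invariants. However, it relies on the deep equivalence of geometric and analytic $K$-homology, and on $K^0(Y)\otimes\R$ being detected by pairing with $K$-homology. Your route needs only homotopy invariance and elementary PL topology: a finite CW complex is homotopy dominated by a closed manifold. It also identifies the two classes directly rather than through their pairings.
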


\begin{proof}
In \cite[Lemma 4.1]{Antonini:2014aa}, Antonini-Azzali-Skandalis show that this holds when $Y$ is a closed, connected manifold.  Let now $Y$ be a connected finite CW complex.  As $K_0(C(Y)\otimes B)\cong K^0(Y)\otimes \R$ is finitely generated, it suffices to show that for any class $x$ in $K_0(Y)$, the pairings of $[E_u]$ and $[1_B]$ with $x$ are the same.  We may represent $x$ as a Baum-Douglas cycle $[M,S,f]$, in which case naturality reduces us to the case of pairings for $Y=M$, and we are done by \cite[Lemma 4.1]{Antonini:2014aa} (possibly working one connected component at a time if $M$ is disconnected).
\end{proof}

The following result is closely connected to (and in some sense generalizes) \cite[Theorem 6.10]{Willett:2024aa}.

\begin{theorem}\label{eta main}
Let $Y$ be a finite CW complex, let $V$ and $W$ be flat vector bundles over $Y$ of the same dimension, and let $(M,S,f)$ be an odd geometric cycle for $Y$ representing a torsion class in $K_1(Y)$.  Then
$$
\langle [M,S,f],[V]-[W]\rangle_{\text{II}}=\rho_{f^*V,f^*W}(D_S),
$$ 
where $D_S$ is any choice of Dirac-type operator associated to the Dirac bundle $S$.
\end{theorem}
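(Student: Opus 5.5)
The plan is to work throughout with the $\R/\Z$-coefficient picture of the secondary pairing from Remark \ref{rz rem}, and to avoid Corollary \ref{aas cor} directly. The reason is that $f_*[D_S]$ is torsion in $K_1(Y)$, but $[D_S]$ itself need not be torsion in $K_1(M)$. So we cannot simply pull back along $f$ using Proposition \ref{nat}. Instead we pull back at the level of the \emph{primary} pairing with $\R/\Z$ coefficients, which makes sense without any torsion hypothesis, and then invoke Theorem \ref{aas the} on $M$.

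First, I will reduce to $Y$ connected, working one component at a time. Both sides are additive over components of $Y$ once we split $M$ as the disjoint union of the preimages $f^{-1}(Y_k)$. Then $C(Y)$ is in the UCT class, being commutative. Let $\pi_V,\pi_W:\pi_1(Y)\to U_r(\C)$ be the holonomy representations of $V$ and $W$.

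Next, I will construct relative classes $[\pi_V],[\pi_W]\in K^1(Y;\R/\Z)$ exactly as in line \eqref{pi rz class}. The necessary input is that the flat $B$-bundles $\widetilde Y\times_{\pi_V}(\C^r\otimes B)$ are trivial in $K_0(C(Y)\otimes B)$, which is Lemma \ref{fac triv}. This gives isomorphisms $w_V:E_{\pi_V}\otimes B\to 1_r\otimes B$ and triples $[E_{\pi_V},1_r,w_V]$. Their boundary images in $K^0(Y)$ are $[V]-r$ and $[W]-r$, so $[\pi_V]-[\pi_W]$ lifts $[V]-[W]$ along the boundary map $\beta:K^1(Y;\R/\Z)\to K^0(Y)$. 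As a by-product, the same lemma shows that $[V]-[W]$ maps to zero in $K^0(Y)\otimes\R$, hence is torsion, so the left-hand side of the theorem is defined.

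By Remark \ref{rz rem}, for $x=f_*[D_S]\in k^1(C(Y))$ we have $\langle x,[V]-[W]\rangle_{\text{II}}=\langle x,[\pi_V]-[\pi_W]\rangle$, the primary pairing $K_1(Y)\times K^1(Y;\R/\Z)\to\R/\Z$. The choice of lift does not matter: two lifts differ by the image of $K^1(Y)\otimes\R$, on which $x$ pairs to zero, since that pairing is the $\R$-linear extension of $\gamma(x)=0$.

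Then I will use naturality of the primary pairing with $\R/\Z$ coefficients under $f:M\to Y$, which holds by associativity of the Kasparov product and the inductive-limit functoriality of Remark \ref{rz rem}. This gives
$$\langle f_*[D_S],[\pi_V]-[\pi_W]\rangle=\langle [D_S],f^*[\pi_V]-f^*[\pi_W]\rangle.$$
I then check that $f^*[\pi_V]=[\pi_{f^*V}]$ on each component of $M$, where $\pi_{f^*V}=\pi_V\circ f_*$ on fundamental groups. Pulling back the data $(E_{\pi_V},1_r,w_V)$ gives a valid triple for $f^*V$, and by the independence of choices in \cite[Proposition 5.2]{Antonini:2014aa} this triple represents $[\pi_{f^*V}]$. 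Finally I apply Theorem \ref{aas the} on each component of $M$ and sum to obtain $\rho_{f^*V,f^*W}(D_S)$.

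I expect the main obstacle to be making this naturality step rigorous. One point is the compatibility of the relative-class construction with pullback, which includes the independence of the II$_1$-factor $B$ and of the isomorphism $w$. The other is the functoriality of $K^*(\cdot;\R/\Z)$ for the non-separable $C(\iota_B)$, which requires passing through separable subalgebras as in Remark \ref{rz rem}. If the direct pullback of the triple is awkward, I would instead fix one large II$_1$-factor $B$ receiving all the representations involved, and use the uniqueness statement in \cite[Proposition 5.2]{Antonini:2014aa}.
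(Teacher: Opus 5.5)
Your proposal is correct and follows the paper's proof step for step. Like the paper, you reduce to connected $Y$, build $[\pi_V],[\pi_W]\in K^1(Y;\R/\Z)$ using Lemma \ref{fac triv}, and rewrite the secondary pairing as the primary $\R/\Z$-pairing via Remark \ref{rz rem}; you then pull back along $f$ using $f^*[\pi_V]=[\pi_V\circ f_*]$ and conclude with the Antonini--Azzali--Skandalis formula on $M$. Your decision to finish with Theorem \ref{aas the} on each component of $M$, rather than with Corollary \ref{aas cor} as the paper does, is in fact the more accurate citation: $[D_S]$ need not be torsion in $K_1(M)$, and only the primary-pairing identity is needed at that point.
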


\begin{proof}
As in the proof of Corollary \ref{aas cor}, we may assume that $Y$ is connected.   Let $\Gamma=\pi_1(Y)$ and let $\pi_V,\pi_W:\Gamma\to U_r(\C)$ be representations underlying the flat bundles $V,W$ as in line \eqref{ass flat}.  The construction of Antonini-Azzali-Skandalis from line \eqref{pi rz class} makes sense in this context: the only difference is we have to use Lemma \ref{fac triv} above in place of \cite[Lemma 4.1]{Antonini:2014aa}.  We thus can build associated classes $[\pi_V],[\pi_W]\in K^1(Y;\R/\Z)$ such that the natural map $K^1(Y;\R/\Z)\to K^0(Y)$ takes the difference $[\pi_V]-[\pi_W]$ to $[V]-[W]$.  Just as in the proof of Corollary \ref{aas cor}, we then have 
\begin{align*}
\langle [M,S,f],[V]-[W]\rangle_{\text{II}} & =\langle [M,S,f],[\pi_V]-[\pi_W]\rangle \\
& = \langle [M,S,\text{id}],[\pi_V\circ f_*]-[\pi_W\circ f_*]\rangle
\end{align*}
where the first equality is by definition of the secondary pairing (compare Remark \ref{rz rem}) and the second is naturality of the primary pairing where we have used that the pulback of the class $[\pi_V]$ under $f$ is $[\pi_V\circ f_*]$ with $f_*:\pi_1(M)\to \Gamma$ the map on fundamental groups induced by $f$.  According to the definition of the isomorphism between Baum-Douglas $K$-homology and Kasparov $K$-homology (see line \eqref{geo an} above), $[M,S,f]$ corresponds to $[D_S]$.  Hence 
$$
\langle [M,S,f],[V]-[W]\rangle_{\text{II}}=\langle [D_S],[\pi_V\circ f_*]-[\pi_W\circ f_*]\rangle
$$
and we are done by Corollary \ref{aas cor}.
\end{proof}

\begin{example}\label{non triv eta}
Let us give an example where the pairing of Theorem \ref{eta main} is non-trivial.  

Let $Y$ be a finite, connected CW complex.  Write $\Gamma=\pi_1(Y)$, and assume that the torsion subgroup $\tor \Gamma_{ab}$ of the abelianization of $\Gamma$ is non-trivial: for example, $Y$ might be a non-orientable surface, in which case $\tor\Gamma_{ab}\cong \Z/2$.  

Let $g\in \tor \Gamma_{ab}$ have order $d>1$.  Let $\pi:\langle g\rangle\to U_1(\C)$ be the representation of the cyclic group generated by $g$ sending $g$ to the $d^{\text{th}}$ root of unity $e^{2\pi i /d}$.  Abusing notation, extend $\pi$ to a representation $\pi:\Gamma_{ab}\to U_1(\C)$ using injectivity of $U_1(\C)$; continuing to abuse notation, pull $\pi$ back to a representation $\pi:\Gamma\to U_1(\C)$.  On the other hand, let $\sigma:\Gamma\to U_1(\C)$ be the trivial representation.   

We recall that there is a canonical (injective) map $H_1(Y)\to K_1(Y)$ concretely defined by sending the homotopy class of a loop $f:S^1\to Y$ to the Baum-Douglas cycle $[S^1,\C,f]$: see for example \cite[Definition 3.1, Proposition 3.4, and Theorem 4.1]{Matthey:2002aa}.   Let now $f:S^1\to Y$ be a continuous function realizing $[g]\in H_1(Y)=\Gamma_{ab}$ and let $[S^1,\C,f]$ be the corresponding Baum-Douglas cycle.  

Let now $V_\pi$ and $V_\sigma$ be the line bundles on $Y$ associated to $\pi$ and $\sigma$ respectively as in line \eqref{ass flat}.  Then
$$
\langle [S^1,\C,f],[V_\pi]-[V_\sigma]\rangle_{\text{II}}=\rho_{f^*V_\pi,f^*V_\sigma}(D),
$$
where $D=-i\frac{d}{dx}$ is the usual Dirac operator on the circle.  Following the computations in \cite[Example 3.11.4]{Gilkey:1994aa} or \cite[pages 410-411]{Atiyah:1975aa} we have 
$$
\rho_{f^*V_\pi,f^*V_\sigma}(D)=\Big[\frac{1}{2\pi i} \log \pi(g) \Big] = [1/d]\in \Q/\Z,
$$
getting a non-trivial pairing as claimed.
\end{example}

\section{The Thomsen exact sequence and the zeta map}\label{tz sec}

In this section, we discuss a connection between our secondary pairing and the Thomsen exact sequence of \cite{Thomsen:1995aa}, as well as the \emph{$\zeta$ map} independently due to Carri\'{o}n-Gabe-Schafhauser-Tikuisis-White \cite[Section 3.1]{Carrion:2020aa}, and Gong-Lin-Niu \cite[Remark 6.6]{Gong:2023aa}.  See also \cite{Munkholm:2025aa} for a characterization of these maps that shows they are unique in a fairly strong sense; in particular, this implies that the constructions of Carri\'{o}n et al and of Gong et al give the same map.   Our main motivation is potential applications to group $C^*$-algebras, but we focus here on general theory, and just give one non-trivial example.

We need to recall some background.  Let $A$ be a unital $C^*$-algebra.  For each $n$, let $U_n(A)$ denote the unitary group of $M_n(A)$, and let $\overline{D}_n(A)$ denote the closure of the derived subgroup of $U_n(A)$.  The \emph{Hausdorffized unitary algebraic $K_1$-group} of $A$, denoted $\overline{K}_1^{alg}(A)$, is defined to be direct limit of the groups $U_n(A)/\overline{D}_n(A)$ as $n\to\infty$ through the maps induced by the inclusions
$$
U_n(A)\to U_{n+1}(A),\quad u\mapsto \begin{pmatrix} u & 0 \\ 0 & 1 \end{pmatrix}
$$
(as are typically used to define $K_1(A)$).  

We write $TA$ for the tracial state space of $A$ equipped with the weak-$*$ topology it inherits from $A^*$, and we write $\text{Aff}(TA)$ for the real vector space of continuous affine functions $f:TA\to \R$, equipped with the topology induced by the supremum norm.  We let $\rho:K_0(A)\to \text{Aff}(TA)$ denote the usual pairing map, and let $\overline{\rho(K_0(A))}$ denote the (norm) closure of its image.  There is then a natural exact sequence 
\begin{equation}\label{thom seq}
\xymatrix{0 \ar[r] & \frac{\text{Aff}(TA)}{\overline{\rho(K_0(A))}} \ar[r] &  \overline{K_1}^{alg}(A) \ar[r] &  K_1(A) \ar[r] & 0 }
\end{equation}
due to Thomsen  \cite{Thomsen:1995aa} (see also \cite[Section 2.2]{Carrion:2020aa} for an exposition of the sequence in the form in line \eqref{thom seq}), where the map $\overline{K_1}^{alg}(A) \to K_1(A)$ is the canonical forgetful map sending the class of a unitary $u$ to the class of the same unitary.

We recall now that for each $n\geq 2$ there is a homomorphism 
$$
\zeta_n:K_0(A;\Z/n)\to \overline{K_1}^{alg} (A)
$$
defined in \cite[Section 3.1]{Carrion:2020aa} and \cite[Remark 6.6]{Gong:2023aa}. As we will need it later, we briefly recall the definition from \cite{Carrion:2020aa}.  By definition, we have that $K_0(A;\Z/n):=K_1(A\otimes I_n)$, where $I_n$ is the dimension drop algebra of line \eqref{dd alg}, so
$$
A\otimes I_n=\{f\in C([0,1],A\otimes M_n(\C))\mid f(0)=0,~f(1)\in A\otimes 1_{M_n(\C)}\}.
$$
We may represent an element of $K_1(A\otimes I_n)$ via a smooth unitary-valued map $u:[0,1]\to M_{kn}(A)$ for some $k$ such that $u(0)\in M_k(\C1_A)$ and with $u(1)\in M_k(A)$.  Carri\'{o}n et al then define
\begin{equation}\label{zn def}
\zeta_n[u]:=[u(0)]-[u(1)]-\Big[\exp\Big(\frac{2\pi i}{n}\Delta(u)\Big)\Big]
\end{equation}
where $u(0)$ and $u(1)$ are considered as elements of $M_k(A)$, and 
$$
\Delta(u):=\frac{1}{2\pi i} \int_0^1 u'(t)u^*(t)dt
$$
is the de la Harpe-Skandalis determinant \cite{Harpe:1984aa} of the smooth unitary path, an element of the self-adjoint part of $M_{kn}(A)$.  The map $\zeta_n$ is shown to be well-defined and natural in \cite[Section 3.1]{Carrion:2020aa}.  It is shown to be unique in a fairly strong sense in \cite{Munkholm:2025aa}: in particular, this shows that it does not depend on the particular model we chose for $K_0(A;\Z/n)$.

It will be useful for us to combine the maps $\zeta_n$ into a single homomorphism.  For each $n,m\geq 2$, let $\kappa_{mn,n}$ be  as in Examples \ref{lambda ex} part \eqref{kap mn n}.  Then \cite[Proposition 3.2]{Carrion:2020aa} implies that the zeta maps fit into a commutative diagram
$$
\xymatrix{ K_0(A;\Z/n) \ar[r]^-{\zeta_n} \ar[d]^-{\kappa_{mn,n}} & \overline{K_1}^{alg} (A) \ar@{=}[d] \\
K_0(A;\Z/(nm)) \ar[r]^-{\zeta_{mn}} & \overline{K_1}^{alg} (A) }.
$$
Hence we may take the direct limit over the directed set $J$ of line \eqref{j set} to get a natural homomorphism 
\begin{equation}\label{qz zeta}
\zeta:K_0(A;\Q/\Z)\to \overline{K}_1^{alg}(A).
\end{equation}
We now get to our first useful technical tool.  For the statement, recall that if $A$ is a $C^*$-algebra and we tensor $A$ by the short exact sequence in line \eqref{qz ses}, then (using also lines \eqref{qz coeffs} and \eqref{q coeffs}) we get a long exact sequence 
$$
\xymatrix{ K_1(A)\otimes \Q \ar[r] & K_1(A;\Q/\Z) \ar[r] & K_0(A) \ar[d] \\
K_1(A) \ar[u] & K_1(A;\Q/\Z) \ar[l] & K_0(A)\otimes \Q) \ar[l] },
$$
where the maps $K_i(A)\to K_i(A)\otimes \Q$ are given by $x\mapsto x\otimes 1_\Q$.  Splitting this up along the kernel and cokernel of the maps $K_i(A)\to K_i(A)\otimes \Q$, we get the \emph{K\"{u}nneth}, or \emph{change of coefficients} exact sequence
\begin{equation}\label{kcc}
\xymatrix{ 0 \ar[r] & K_0(A)\otimes (\Q/\Z) \ar[r] & K_0(A;\Q/\Z) \ar[r]  & \tor K_1(A)  \ar[r] & 0 }.
\end{equation}
for $i\in \{0,1\}$.

\begin{lemma}\label{zeta diag}
For any $C^*$-algebra $A$ there is a natural commutative diagram
$$
\xymatrix{ 0 \ar[r] & K_0(A)\otimes (\Q/\Z) \ar[r] \ar[d] & K_0(A;\Q/\Z) \ar[r] \ar[d]^-\zeta & \tor K_1(A) \ar@{=}[d] \ar[r] & 0 \\
0 \ar[r] & \tor\Bigg(\frac{\text{Aff}(TA)}{\overline{\rho(K_0(A))}}\Bigg) \ar[r] & \tor ( \overline{K_1}^{alg}(A)) \ar[r] &  \tor K_1(A) \ar[r] & 0 }
$$
of short exact sequences, where the top line is as in line \eqref{kcc}, the bottom line consists of the torsion subgroups of the Thomsen exact sequence from line \eqref{thom seq}, and the left hand vertical map is induced from the pairing map $\rho:K_0(A)\to \text{Aff}(TA)$ (plus the fact that $\text{Aff}(TA)$ is an $\R$-vector space).
\end{lemma}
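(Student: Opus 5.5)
Since $K_0(A;\Q/\Z)$ is a torsion group (it is the direct limit over $J$ of the groups $K_0(A;\Z/n)$, each killed by $n$), the map $\zeta$ from line \eqref{qz zeta} automatically lands in $\tor(\overline{K_1}^{alg}(A))$. Naturality of all maps follows from naturality of $\zeta_n$ (\cite[Section 3.1]{Carrion:2020aa}) and of the Thomsen sequence.

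The key steps are as follows.

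\textbf{Exactness of the bottom row.} I would first check that taking torsion subgroups of the Thomsen sequence \eqref{thom seq} gives a short exact sequence. Left exactness of $\tor$ is automatic. For surjectivity onto $\tor K_1(A)$ I would not argue abstractly. Instead, I would deduce it from commutativity of the right-hand square together with surjectivity of $K_0(A;\Q/\Z)\to \tor K_1(A)$ from \eqref{kcc}.

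\textbf{The right-hand square.} I need that the composite $K_0(A;\Z/n)\xrightarrow{\zeta_n}\overline{K_1}^{alg}(A)\to K_1(A)$ equals the boundary map $\beta_n$ from the sequence \eqref{base les}, corestricted to $\text{Tor}(\Z/n,K_1(A))$. Write an element as a unitary path $u$ with $u(0)\in M_k(\C)$ and $u(1)=v\otimes 1_n$, where $v\in M_k(A)$. Then $\beta_n[u]$ is $[v]$, up to a sign convention that I would fix against the definition of the six-term boundary maps used in Lemma \ref{rhom coker}. The image of $\zeta_n[u]$ in $K_1(A)$ is $[u(0)]-[v]-[\exp(\tfrac{2\pi i}{n}\Delta(u))]$. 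Here $[u(0)]=0$ since $U_k(\C)$ is connected, and the exponential term is in the identity component, so it is zero in $K_1$. This leaves $-[v]$, which matches $\beta_n$ up to the sign convention. Passing to the direct limit over $J$ gives the square.

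\textbf{The left-hand map and square.} The left vertical map is $x\otimes [p/q]\mapsto [\tfrac{p}{q}\rho(x)]$ in $\text{Aff}(TA)/\overline{\rho(K_0(A))}$. This is well defined because $\rho(x)\in\overline{\rho(K_0(A))}$, and its image is torsion. To check commutativity, restrict to $K_0(A)\otimes\Z/n$, whose image in $K_0(A;\Z/n)$ is given by $\rho_n$ from Examples \ref{lambda ex}(\ref{rhom}). Take a projection $p\in M_k(A)$ and represent $\rho_n[p]$ by the loop $u(t)=\exp(2\pi i t\,p\otimes e)$, where $e$ is a rank-one projection in $M_n$, with $u(0)=u(1)=1$. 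Then $\Delta(u)=p\otimes e$, and $\exp(\tfrac{2\pi i}{n}\Delta(u))=\exp(\tfrac{2\pi i}{n}p)\oplus 1$. The Thomsen map sends $h\in\text{Aff}(TA)$ to $[\exp(2\pi i h)]$, suitably interpreted, so this element is the image of $\tfrac1n\rho(p)$, up to sign. Compatibility with the limit over $J$ then gives the square, again after fixing the sign convention (possibly by composing the vertical map with $-1$).

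I expect the \textbf{main obstacle} to be matching these conventions: the sign and normalization of the Thomsen map, of the boundary $\beta_n$, and of $\rho_n$, so that both squares commute on the nose. Once the squares commute, exactness of the bottom row follows as above. One further point needs care. The top row of \eqref{kcc} is identified with the $\Z/n$-level sequences through the maps $\kappa_{mn,n}$, and Lemma \ref{kap lem} shows that on $\text{Tor}(\Z/n,K_1(A))$ these are the inclusions, which is what makes the limits compatible.
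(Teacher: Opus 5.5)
Your proposal is correct and has the same skeleton as the paper's proof. Both use level-$n$ diagrams comparing $0\to K_0(A)\otimes\Z/n\to K_0(A;\Z/n)\to\text{Tor}(\Z/n,K_1(A))\to 0$ with the Thomsen sequence, pass to the direct limit over $J$, and restrict to torsion subgroups. You differ from the paper in two local ways:
\begin{enumerate}
\item The paper does not compute the level-$n$ squares; it imports them from \cite[Proposition 3.2]{Carrion:2020aa}. You check them by hand from \eqref{zn def}.
\item The paper gets exactness of the torsion part of \eqref{thom seq} from divisibility of $\text{Aff}(TA)/\overline{\rho(K_0(A))}$, which makes the Thomsen sequence split. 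You instead get surjectivity of $\tor\overline{K_1}^{alg}(A)\to\tor K_1(A)$ from the right-hand square together with surjectivity in \eqref{kcc}.
\end{enumerate}
Your exactness argument is valid, and it would even survive a sign $\pm1$ on the right vertical map. The paper's argument has the advantage of not depending on $\zeta$ at all.

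The cost of checking the squares by hand is that you must actually settle the signs, and your bookkeeping there is slightly off.
\begin{itemize}
\item The sign cannot live in $\beta_n$. That map is induced by the $*$-homomorphism ``evaluation at $1$'', so $\beta_n[u]=[v]$ with no convention to choose.
\item You also cannot repair things by negating only the left vertical map, since the right vertical map is prescribed to be the identity.
\item Read literally with the endpoint conventions of \eqref{dd alg}, formula \eqref{zn def} makes both squares fail by the same global sign. On the right your computation gives $-[v]$. On the left, with the Bott identification $[p]\mapsto[\exp(2\pi i t\,p)]$ matching \eqref{kci}, it gives the image of $-\tfrac{1}{n}\rho[p]$ under the Thomsen map.
\end{itemize}
This comes from an endpoint convention in $\zeta_n$, not from a defect in your approach: reversing the path swaps the roles of $u(0)$ and $u(1)$ and negates $\Delta(u)$. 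The paper imports the convention under which \cite[Proposition 3.2]{Carrion:2020aa} holds, and Lemma \ref{base qz} tacitly uses it when it writes $\zeta_n[u]=e^{2\pi i/m}$ rather than its inverse. With that convention both squares commute on the nose and your argument goes through.
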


\begin{proof}
For each $n$ in the set $J$ from line \eqref{j set}, \cite[Proposition 3.2]{Carrion:2020aa} gives a commutative diagram
$$
\xymatrix{ 0 \ar[r] & K_0(A)\otimes (\Z/n) \ar[r] \ar[d] & K_0(A;\Z/n) \ar[r] \ar[d]^-{\zeta_n} & \text{Tor}(\Z/n,K_1(A)) \ar[d] \ar[r] & 0 \\
0 \ar[r] & \frac{\text{Aff}(TA)}{\overline{\rho(K_0(A))}} \ar[r] & \overline{K_1}^{alg}(A) \ar[r] &  K_1(A) \ar[r] & 0 }
$$
where: the top line is the analog of line \eqref{kcc} above, but starting with the short exact sequence in line \eqref{dd ses} rather than that of line \eqref{qz ses}; left hand vertical map is determined by $[p]\otimes [m]\mapsto \frac{m}{n}\rho[p]$; and the right hand map is the canonical inclusion of the $n$-torsion subgroups (compare line \eqref{tor zn}).  Taking the direct limit of the top line over the directed set $J$ gives a commutative diagram of short exact sequences
$$
\xymatrix{ 0 \ar[r] & K_0(A)\otimes (\Q/\Z) \ar[r] \ar[d] & K_0(A;\Q/\Z) \ar[r] \ar[d]^-\zeta & \tor K_1(A) \ar[d] \ar[r] & 0 \\
0 \ar[r] & \frac{\text{Aff}(TA)}{\overline{\rho(K_0(A))}} \ar[r] & \overline{K_1}^{alg}(A) \ar[r] &  K_1(A) \ar[r] & 0 }
$$
where the top line is as in line \eqref{kcc} (we use here that $C(\iota_Q)=\lim_J C(\iota_n)$ in a way compatible with the short exact sequences of lines \eqref{qz ses} and \eqref{dd ses}).  Note that the bottom line splits (non-canonically) as the left hand group is divisible, whence injective.  Hence the bottom row remains exact if we take torsion subgroups.  Moreover, the top row consists of torsion groups, so the vertical maps take image in the torsion subgroups of the bottom row.
\end{proof}

\begin{lemma}\label{base qz}
For any matrix algebra $M_n=M_n(\C)$, let $\phi:K_0(M_n;\Q/\Z)\to \Q/\Z$ be the canonical isomorphism arising from the fixed isomorphism $K_1(C(\iota_Q))\cong \Q/\Z$ (compare line \eqref{qz coeffs}), and define $d:\tor \overline{K}_1^{alg}(M_n)\to \Q/\Z$ by 
$$
d([u])=\frac{1}{2\pi i}\log\det(u).
$$
Then $d$ is well-defined, the diagram
$$
\xymatrix{ K_0(M;\Q/\Z) \ar[d]^-\zeta \ar[r]^-\phi & \Q/\Z \ar@{=}[d] \\
 \tor \overline{K}_1^{alg}(M_n) \ar[r]^-d & \Q/\Z }
$$
commutes, and all the maps appearing are isomorphisms.
\end{lemma}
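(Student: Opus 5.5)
Specialize Lemma \ref{zeta diag} to $A=M_n$. There $K_1(M_n)=0$, so $\tor K_1(M_n)=0$. Also $K_0(M_n)=\Z$, $TM_n$ is a single point, so $\text{Aff}(TM_n)=\R$, and $\rho(K_0(M_n))=\frac{1}{n}\Z$ if we use the normalized trace. The diagram of Lemma \ref{zeta diag} then collapses to
$$
K_0(M_n;\Q/\Z)\cong \Z\otimes \Q/\Z \longrightarrow \tor(\R/\tfrac1n\Z)\cong \tor \overline{K}_1^{alg}(M_n),
$$
with the middle map $\zeta$.

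Step 1 is to check that $\zeta$ is an isomorphism. The left vertical map of Lemma \ref{zeta diag} is $[p]\otimes q\mapsto q\rho[p]$, so it sends $[1_{rank\,1}]\otimes q$ to $q/n$ mod $\frac1n\Z$. This is an isomorphism $\Q/\Z\to \Q/\tfrac1n\Z=\tor(\R/\tfrac1n\Z)$. Since the outer terms of both rows are zero on the right, the five lemma (or simply the collapsed diagram) shows $\zeta$ is an isomorphism. Likewise $\phi$ is an isomorphism: it is the composite of the Morita/KK-equivalence $K_1(M_n\otimes C(\iota_Q))\cong K_1(C(\iota_Q))$ with the fixed identification of line \eqref{kci}.

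Step 2 is to show $d$ is well-defined. Recall that $\overline{K}_1^{alg}(M_n)=\varinjlim U_k(M_n)/\overline{D}_k$. The determinant $\det:U_{kn}(\C)\to U_1$ kills commutators, is continuous, and is compatible with the stabilization maps, so it descends to a homomorphism $\overline{K}_1^{alg}(M_n)\to U_1=\R/\Z$ via $\frac{1}{2\pi i}\log$. Since $SU_{kn}$ is perfect and closed, this map is in fact an isomorphism onto $\R/\Z$. On torsion it therefore gives an isomorphism $d:\tor\overline{K}_1^{alg}(M_n)\to\Q/\Z$. (This is consistent with Step 1: under $\text{Aff}\to\overline{K}_1^{alg}$, $t\mapsto[\exp(2\pi i t\cdot 1_n)]$, so $d$ is multiplication by $n$, taking $\R/\frac1n\Z\to\R/\Z$ isomorphically.)

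Step 3 is commutativity, which is the main obstacle. Since all maps are isomorphisms between copies of $\Q/\Z$, and all constructions commute with the direct limit over $J$ via the compatible $\zeta_m$, it suffices to check one generator at each finite level $\Z/m$. By naturality of $\zeta$ under the unital inclusion $\C\to M_n$, combined with the fact that $d$ on $M_n$ restricted from $\C$ is again $\frac{1}{2\pi i}\log\det$ (the determinant of $u\otimes 1_n$ is $\det(u)^n$, which matches how $\phi$ transforms under the multiplication by $n$ on $K_0$), this reduces to the case $n=1$.

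Concretely, compute $\zeta_m$ on the generator $\rho_m(1)$ of $K_0(\C;\Z/m)=K_1(I_m)$. It is represented by the path $u(t)=e^{2\pi i t}$ placed in a rank-one corner, more precisely $u(t)=\text{diag}(e^{2\pi i t},1,\dots,1)$ in $M_m$. This path satisfies $u(0)=1$ and $u(1)=1$, which is scalar. By \eqref{zn def}, $\zeta_m[u]=-[\exp(\frac{2\pi i}{m}\Delta(u))]$, where $\Delta(u)=e_{11}$. Hence $d(\zeta_m[u])=-\frac1m$. Under $\phi$, this generator maps to $\pm\frac1m$ by diagram \eqref{kci} together with Example \ref{lambda ex}\eqref{rhom}. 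The remaining task is to pin down the sign conventions (the exponential map and the orientation in \eqref{kci}) so that the two values agree. I expect this sign bookkeeping to be the fiddly part, possibly requiring a careful choice of the representing unitary, for instance using $e^{-2\pi i t}$.
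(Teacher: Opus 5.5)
Your Steps 1 and 2 are fine, but Step 3 is not completed, and it carries the only real content of the lemma. You arrive at $d(\zeta_m[u])=-\frac1m$ against $\phi[u]=\pm\frac1m$ and leave the sign open. You suggest it might be repaired by choosing a different representative such as $e^{-2\pi i t}$, but that cannot work. Both $d\circ\zeta$ and $\phi$ are homomorphisms, so replacing $u$ by $u^*$ negates both values. If your computation were right, it would prove $d\circ\zeta=-\phi$, i.e.\ that the square does \emph{not} commute. The sign is fixed by conventions already in place, not by a choice of cycle. What is missing is a sign-definite evaluation of both composites on one generator.

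Both values are in fact determined.

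\emph{The value of $\phi$.} In \eqref{kci} the paper normalizes $K_1(SQ)\cong\Q$ by $\exp[1]=[e^{2\pi i t}]\mapsto 1$. Under $I_m\to C(\iota_Q)$ your loop goes to $e^{2\pi i t}p+(1-p)$ with $\tau(p)=\frac1m$, so $\phi[u]=[\frac1m]$ exactly, not $\pm\frac1m$. This is the paper's identification of the winding-number-one loop with $[1]\in\Z/m\mapsto[1/m]$.

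\emph{The value of $\zeta$.} The paper's proof computes $\zeta_m[u]=[e^{2\pi i/m}]$, hence $d(\zeta_m[u])=[\frac1m]$ and the square commutes. Your $-\frac1m$ comes from taking the minus sign on the exponential term of \eqref{zn def} literally. As printed, however, that formula is not even well defined: on the null-homotopic path $t\mapsto e^{2\pi i\theta t}1_m$ it returns $-2\theta\neq 0$ for generic $\theta$. With endpoint terms $[u(0)]-[u(1)]$, the consistent convention is $+[\exp(\frac{2\pi i}{m}\Delta(u))]$, which is the sign the paper's computation uses.

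\emph{A path-free alternative.} You can avoid paths entirely using what you already set up. Since $K_1(M_n)=0$, Lemma \ref{zeta diag} (with Thomsen's map $\hat h\mapsto[e^{2\pi i h}]$) gives $\zeta([e_{11}]\otimes q)=[\exp(2\pi i\tfrac qn 1_n)]$, whose $d$-value is $q$ by your Step 2. Meanwhile $\phi([e_{11}]\otimes q)=q$, because after Morita invariance the inclusion $K_0(\C)\otimes\Q/\Z\to K_0(\C;\Q/\Z)$ is the identity of $\Q/\Z$ under the identifications of \eqref{kci}.

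\emph{A minor point.} Your reduction from $M_n$ to $\C$ via the unital inclusion is valid only at the $\Q/\Z$ level, where multiplication by $n$ is surjective. It fails at a finite level $\Z/m$ with $\gcd(m,n)>1$. The paper avoids this by using the generator $u_{11}\oplus 1_{mn-1}$ in $M_{mn}(\C)$ directly.
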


\begin{proof}
As the kernel of the determinant map $\det:U_n(\C)\to U_1(\C)$ is exactly the commutator subgroup for each $n>1$, $d$ is a well-defined isomorphism.  It thus suffices to show that the diagram commutes.  For this, it suffices to check that the corresponding diagram
$$
\xymatrix{ K_0(M_n;\Z/m) \ar[d]^-{\zeta_n} \ar[r]^-{\phi_m} & \Z/m \ar[d] \\
 \tor \overline{K}_1^{alg}(M_n) \ar[r]^-d & \Q/\Z }
$$
commutes for any $m$, where the right hand vertical map takes $[1]$ to $[1/m]$ and $\phi_m$ is again the canonical isomorphism.  As $K_0(M_n;\Z/m)\cong \Z/m$, it suffices to check this on a generator.

Now, a generator of $K_0(M_n;\Z/m)$ can be given by a function $u:[0,1]\to M_{mn}(\C)$ of the form $u=u_{11}\oplus 1_{mn-1}$, where $u_{11}:[0,1]\to S^1$ is a smooth function with $u_{11}(0)=1=u_{11}(1)$, and with winding number one\footnote{This follows from considering the short exact sequence $0\to M_n(SM_m)\to M_n(I_m)\to M_n\to 0$ defined by taking matrices over line \eqref{dd ses}, and noting that one can choose a generator of $M_n(SM_m)=M_{nm}(C_0(0,1))$ of the stated form.}.  According to line \eqref{zn def}, we have then that 
$$
\zeta_n[u]=\exp\Bigg(\frac{1}{m}\int_0^1 u'(t)u^*(t)dt\Bigg)=e^{2\pi i/m}.  
$$
Hence $d(\zeta_n[u])=[1/m]$, and we are done.
\end{proof}

Here is the main technical result of this section; it generalizes Theorem \ref{intro zeta} from the introduction.  For the statement, recall that a homomorphism $\pi:A\to M_n(\C)$ from a $C^*$-algebra to a matrix algebra canonically induces an element of the $K$-homology group $K^0(A)$ in such a way that the pairing map $\gamma:K^0(A)\to \text{Hom}(K_0(A),\Z)$ takes $[\pi]$ to the map $\pi_*$ it induces on $K$-theory.  Recall also that $k^0(A)$ denotes the kernel of $\gamma$, and the notation for the secondary pairing from Definition \ref{sec pair def}.

\begin{theorem}\label{thom comp}
Let $A$ be a unital $C^*$-algebra.  Let $\pi,\sigma:A\to M_n(\C)$ be homomorphisms that induce the same map on $K$-theory, and let $[\pi]-[\sigma]$ denote the corresponding class in $k^0(A)$.   Then for any unitary $u\in M_n(A)$ such that the class $[u]_{alg}\in \overline{K}_1^{alg}(A)$ is torsion\footnote{Whence also the class $[u]\in K_1(A)$ is torsion, so the secondary pairing below makes sense.}, we have 
$$
\langle [\pi]-[\sigma],[u]\rangle_{\text{II}}=\frac{1}{2\pi i} \log\det(\pi(u)\sigma(u^*)).
$$
\end{theorem}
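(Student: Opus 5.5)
The plan is to compute the pairing through naturality of the zeta map, reducing everything to the matrix algebra computation of Lemma \ref{base qz}. By Definition \ref{sec pair def}, $\langle [\pi]-[\sigma],[u]\rangle_{\text{II}}$ is obtained by choosing any lift $x\in K_0(A;\Q/\Z)$ of $[u]\in\tor K_1(A)$ under the surjection in the change of coefficients sequence \eqref{kcc}, and applying the map $K_0(A;\Q/\Z)\to K_0(\C;\Q/\Z)=\Q/\Z$ induced by $[\pi]-[\sigma]$. Since $[\pi]$ is the class of the homomorphism $\pi:A\to M_n(\C)$ composed with the Morita equivalence $M_n(\C)\sim \C$, this map is $\phi\circ\pi_*-\phi\circ\sigma_*$, where $\pi_*,\sigma_*:K_0(A;\Q/\Z)\to K_0(M_n;\Q/\Z)$ are the functorially induced maps and $\phi$ is the canonical isomorphism of Lemma \ref{base qz}. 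So the pairing equals $\phi(\pi_*x)-\phi(\sigma_*x)$.

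Next use Lemma \ref{base qz} to rewrite $\phi=d\circ\zeta$ on $K_0(M_n;\Q/\Z)$, and naturality of $\zeta$ (from \cite{Carrion:2020aa}, passed to the direct limit in \eqref{qz zeta}) to get $\zeta(\pi_*x)=\pi_*\zeta(x)$, where now $\pi_*:\overline{K}_1^{alg}(A)\to\overline{K}_1^{alg}(M_n)$. Thus the pairing equals $d(\pi_*\zeta(x))-d(\sigma_*\zeta(x))$. The key point is to replace $\zeta(x)$ by $[u]_{alg}$. By Lemma \ref{zeta diag}, $\zeta(x)$ and $[u]_{alg}$ both lie in $\tor\overline{K}_1^{alg}(A)$ and have the same image $[u]$ in $\tor K_1(A)$, so their difference $y$ lies in $\tor(\text{Aff}(TA)/\overline{\rho(K_0(A))})$. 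I would then check that $\pi_*(y)=\sigma_*(y)$: the map $\text{Aff}(TA)/\overline{\rho(K_0(A))}\to \text{Aff}(TM_n)/\rho(K_0(M_n))=\R/\Z$ induced by $\pi$ is $f\mapsto f(\tau\circ\pi)$ (suitably normalized by $n$), and similarly for $\sigma$. This step is the main obstacle: the tracial states $\tau\circ\pi$ and $\tau\circ\sigma$ need not agree on all of $\text{Aff}(TA)$. However, on torsion elements of the quotient, a representative $f$ satisfies $kf\in\overline{\rho(K_0(A))}$, and since $\pi_*=\sigma_*$ on $K_0(A)$, the functionals $f\mapsto n\,f(\tau\circ\pi)$ and $f\mapsto n\,f(\tau\circ\sigma)$ agree on $\rho(K_0(A))$, hence by continuity on its closure. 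So $k(\pi_*y-\sigma_*y)=0$ in $\R/\Z$ with a common real representative up to $\frac1k\Z$. To finish I would argue more carefully that the difference is zero rather than merely $k$-torsion. The cleanest route seems to be to first note that $\pi_*-\sigma_*$ annihilates the image of $K_0(A)\otimes\Q/\Z$ directly, since that composite factors through $\rho$ and $\pi_*=\sigma_*$ on $K_0(A)$. Equivalently, the secondary pairing is independent of the choice of lift $x$, which is already built into Definition \ref{sec pair def}, so only the genuine comparison of $\zeta(x)$ with $[u]_{alg}$ needs this.

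Granting this, the pairing is $d(\pi_*[u]_{alg})-d(\sigma_*[u]_{alg})=\frac{1}{2\pi i}\log\det\pi(u)-\frac{1}{2\pi i}\log\det\sigma(u)$, computed mod $\Z$. Since $\det$ is multiplicative and $\det\sigma(u^*)=\overline{\det\sigma(u)}$, this equals $\frac{1}{2\pi i}\log\det(\pi(u)\sigma(u^*))$ in $\Q/\Z$. Here $d$ is evaluated on torsion classes, which is legitimate because $\pi_*[u]_{alg}$ is torsion whenever $[u]_{alg}$ is. That gives the theorem.
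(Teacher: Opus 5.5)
Your outline is the paper's proof: lift $[u]$ to $x\in K_0(A;\Q/\Z)$, use naturality of $\zeta$ and Lemma \ref{base qz} to write the pairing as $d\big((\pi_*-\sigma_*)\zeta(x)\big)$, and then use the Thomsen sequence to replace $\zeta(x)$ by $[u]_{alg}$. The key input is that $\pi$ and $\sigma$ induce the same map on $\tor\big(\text{Aff}(TA)/\overline{\rho(K_0(A))}\big)$.

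You leave that step ``granted'', but what you already wrote proves it; you just drew too weak a conclusion. The maps $\ell_\pi(f)=n\,f(\tau\circ\pi)$ and $\ell_\sigma(f)=n\,f(\tau\circ\sigma)$ are real-linear functionals on $\text{Aff}(TA)$. As you observe, they agree \emph{as real numbers} on $\rho(K_0(A))$, since both send $\rho[p]$ to the integer $\pi_*[p]=\sigma_*[p]$, and so by continuity they agree on its closure. Now suppose $kf\in\overline{\rho(K_0(A))}$. Then $k\,\ell_\pi(f)=\ell_\pi(kf)=\ell_\sigma(kf)=k\,\ell_\sigma(f)$ in $\R$. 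Since $\R$ is torsion-free, $\ell_\pi(f)=\ell_\sigma(f)$ exactly. Hence $\pi_*(y)=\sigma_*(y)$ on the nose, not merely up to $k$-torsion. This is exactly the paper's argument. The paper phrases it as identifying the torsion subgroup with $\text{span}_\Q\big(\overline{\rho(K_0(A))}\big)/\overline{\rho(K_0(A))}$, on which $\pi$ and $\sigma$ agree by $\Q$-linearity.

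Discard the ``cleanest route'' you propose instead; it would not close this step. The image of $K_0(A)\otimes\Q/\Z$ in that torsion subgroup consists only of classes of elements of $\text{span}_\Q\rho(K_0(A))$. By contrast, $y=\zeta(x)-[u]_{alg}$ is only known to be the class of an element of the possibly larger group $\text{span}_\Q\overline{\rho(K_0(A))}$. Nothing in your argument puts $y$ in the smaller group; as you half-acknowledge, that observation only re-proves independence of the lift $x$. With the torsion-freeness argument in its place, the rest of your proposal goes through and matches the paper, including the final determinant computation.
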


\begin{proof}
For simplicity, write $M_n$ for $M_n(\C)$, and write 
$$
[\pi]_{alg}:\overline{K}_1^{alg}(A)\to \overline{K}_1^{alg}(M_n)
$$
 for the homomorphism induced by $\pi$, and similarly for $\sigma$.  Let $x\in K_0(A;\Q/\Z)$ be any lift of $[u]\in \tor K_1(A)$ to $K_0(A;\Q/\Z)$ under the quotient map in line \eqref{kcc}.  Then by definition of the secondary pairing we have 
\begin{equation}\label{tc1}
\langle [\pi]-[\sigma],[u]\rangle_{\text{II}}=\phi([\pi\otimes \text{id}_{C(\iota_Q)}(x)]-[\sigma\otimes \text{id}_{C(\iota_Q)}(x)])
\end{equation}
where $C(\iota_Q)$ is the mapping cone of $\iota_Q$ as in line \eqref{cone qz}, and $\phi:K_0(M_n;\Q/\Z)\to \Q/\Z$ is the canonical isomorphism as in Lemma \ref{base qz}.  On the other hand, we have a diagram 
$$
\xymatrix{ K_0(A;\Q/\Z) \ar[rrrr]^-{[\pi\otimes \text{id}_{C(\iota_Q)}]-[\sigma\otimes \text{id}_{C(\iota_Q)}]} \ar[d]^-\zeta & & & & K_0(M_n;\Q/\Z) \ar[d]^-\zeta \ar[r]^-\phi & \Q/\Z \ar@{=}[d]  \\
\tor \overline{K}_1^{alg}(A) \ar[rrrr]^-{[\pi]_{alg}-[\sigma]_{alg}} & & & & \tor \overline{K}_1^{alg}(M_n) \ar[r]^-d & \Q/\Z },
$$
where the first square commutes by naturality of $\zeta$, and the second square is as in Lemma \ref{base qz} (and in particular, commutative).  Hence we have 
\begin{equation}\label{tc2}
\phi([\pi\otimes \text{id}_{C(\iota_Q)}(x)]-[\sigma\otimes \text{id}_{C(\iota_Q)}(x)]) = d(([\pi]_{alg}-[\sigma]_{alg})(\zeta(x))).
\end{equation}

Now, we claim that as $[\pi],[\sigma]\in K^0(A)$ induce the same map $K_0(A)\to \Z$, then they induce the same map  
$$
\tor\Bigg( \frac{\text{Aff}(TA)}{\overline{\rho(K_0(A))}}\Bigg)  \to \tor  \Bigg(\frac{\text{Aff}(TM_n)}{\overline{\rho(K_0(M_n))}}\Bigg). 
$$
Indeed, this follows as we have a natural identification
$$
\tor\Bigg(\frac{\text{Aff}(TA)}{\overline{\rho(K_0(A))}}\Bigg)= \frac{\text{span}_\Q( \overline{\rho(K_0(A))})}{\overline{\rho(K_0(A))}},
$$
and $\pi$ and $\sigma$ agree on the right hand side by assumption that they agree on $K$-theory.  

Now, by naturality of the Thomsen sequence from line \eqref{thom seq}, we have a commutative diagram
$$
\xymatrix{ 0 \ar[r] & \tor\Bigg(\frac{\text{Aff}(TA)}{\overline{\rho(K_0(A))}}\Bigg) \ar[r] \ar[d] & \tor ( \overline{K_1}^{alg}(A)) \ar[r] \ar[d] & \tor K_1(A) \ar[d] \ar[r] & 0 \\
0 \ar[r] & \tor\Bigg(\frac{\text{Aff}(TM_n)}{\overline{\rho(K_0(M_n))}}\Bigg) \ar[r] & \tor ( \overline{K_1}^{alg}(M_n)) \ar[r] &  \tor K_1(M_n) \ar[r] & 0 }
$$
where all vertical maps are induced by the formal difference $[\pi]-[\sigma]$ in the appropriate sense.  From this and the previous claim, for any element $y\in\tor \overline{K}_1^{alg}(A)$, the difference  
$$
([\pi]_{alg}-[\sigma]_{alg})(y)\in  \tor \overline{K}_1^{alg}(M_n)
$$
depends only the image of $y$ in $\tor K_1(A)$.  As the classes $\zeta(x)$ and $[u]_{alg}$ in $\tor \overline{K}_1^{alg}(A)$ both map to $[u]$ in $\tor K_1(A)$, we thus have that 
\begin{equation}\label{tc3}
([\pi]_{alg}-[\sigma]_{alg})(\zeta(x))=([\pi]_{alg}-[\sigma]_{alg})([u]_{alg}).
\end{equation}
The result now follows from lines \eqref{tc1}, \eqref{tc2}, and \eqref{tc3} (and the definition of $d$ from Lemma \ref{base qz}).
\end{proof}

Let us give an example of a non-trivial pairing arising from Proposition \ref{thom comp}.

\begin{example}\label{non triv}
Let $\Gamma$ be a finitely generated torsion-free a-T-menable group such that the torsion subgroup $\tor\Gamma_{ab}$ of the abelianization $\Gamma_{ab}$ is non-trivial.  For example, $\Gamma$ might be the fundamental group of a closed non-orientable  surface, in which case $\tor \Gamma_{ab}\cong \Z/2$.

Let $g\in \tor \Gamma_{ab}$ have order $d>1$.  Let $\pi:\langle g\rangle\to U_1(\C)$ be the representation of the cyclic group generated by $g$ sending $g$ to the $d^{\text{th}}$ root of unity $e^{2\pi i /d}$.  Abusing notation, extend $\pi$ to a representation $\pi:\Gamma_{ab}\to U_1(\C)$ using injectivity of $U_1(\C)$; continuing to abuse notation, pull $\pi$ back to a representation $\pi:\Gamma\to U_1(\C)$.  On the other hand, let $\sigma:\Gamma\to U_1(\C)$ be the trivial representation.  

Now, as $\Gamma$ is a-T-menable, the strong Baum-Connes conjecture holds, and so the maximal assembly map 
$$
\mu:RK_*(B\Gamma)\to K_*(C^*\Gamma)
$$
is surjective (we use here that $\Gamma$ is torsion-free to identify the left hand side of the conjecture with $RK_*(B\Gamma)$).  This implies that $\pi$ and $\sigma$ induce the same map $K_0(C^*\Gamma)\to \Z$ (see for example \cite[Lemma 6.6]{Willett:2024aa}).  Hence Proposition \ref{thom comp} applies to $A=C^*\Gamma$ and the representations $\pi$ and $\sigma$.  

Let $[g]\in K_1(C^*(\Gamma))$ be the class of the canonical unitary associated to any lift of $g$ to $\Gamma$; all such lifts define the same class in both $K_1(C^*\Gamma)$ and $\overline{K}_1^{alg}(C^*\Gamma)$.  Because $g$ has order $d$ in the quotient of $\Gamma$ by its commutator subgroup, $[g]_{alg}$ certainly has order at most $d$ in $\overline{K}_1^{alg}(C^*\Gamma)$, as this is also defined by taking a quotient by (a much larger) group of commutators.  From Proposition \ref{thom comp}, we conclude that 
$$
\langle [\pi]-[\sigma],[g]\rangle_{\text{II}} = [1/d]\in \Q/\Z,
$$
so we get something non-trivial even from this fairly simple one-dimensional situation.  

Note that this computation shows that the class $[g]$ has order exactly $[d]$ in $K_1(C^*\Gamma)$; expanding on the argument slightly, one can conclude that under these assumptions, the canonical map $\tor \Gamma_{ab}\to K_1(C^*\Gamma)$ is injective.  This is well-known, and stronger versions hold in greater generality (see for example \cite[Theorem 1.2]{Matthey:2002ab}), but even this version does not seem to be completely trivial.
\end{example}

The reader may have noticed that Examples \ref{non triv eta} and \ref{non triv} look quite similar\footnote{To the extent that we even copied and pasted some of the text.  This is of course an expository choice we made because we wanted them to look similar, and has nothing to do with laziness.}.  This is not an accident: the two are related by the Baum-Connes assembly map, as we hope to explore further in future work.


\bibliography{Generalbib}

\begin{thebibliography}{10}

\bibitem{Antonini:2014aa}
P.~Antonini, S.~Azzali, and G.~Skandalis.
\newblock Flat bundles, von {N}eumann algebras, and ${K}$-theory with
  $\mathbb{R}/\mathbb{Z}$-coefficients.
\newblock {\em J. K-theory}, 13:275--303, 2014.

\bibitem{Atiyah:1975ab}
M.~Atiyah, V.~K. Patodi, and I.~Singer.
\newblock Spectral asymmetry and {R}iemannian geometry. {I}.
\newblock {\em Math. Proc. Cambridge Philos. Soc.}, 77:43--69, 1975.

\bibitem{Atiyah:1975aa}
M.~Atiyah, V.~K. Patodi, and I.~Singer.
\newblock Spectral asymmetry and {R}iemannian geometry. {II}.
\newblock {\em Math. Proc. Cambridge Philos. Soc.}, 78:405--432, 1975.

\bibitem{Atiyah:1976aa}
M.~Atiyah, V.~K. Patodi, and I.~Singer.
\newblock Spectral asymmetry and {R}iemannian geometry. {III}.
\newblock {\em Math. Proc. Cambridge Philos. Soc.}, 79:71--99, 1976.

\bibitem{Baum:1980pt}
P.~Baum and R.~G. Douglas.
\newblock ${K}$-homology and index theory.
\newblock In {\em Operator algebras and applications, Part I}, volume~38 of
  {\em Proc. Sympos. Pure Math.}, pages 117--173. American Mathematical
  Society, 1980.

\bibitem{Baum:2007ek}
P.~Baum, N.~Higson, and T.~Schick.
\newblock On the equivalence of geometric and analytic ${K}$-homology.
\newblock {\em Pure Appl. Math. Q.}, 3(1):1--24, 2007.

\bibitem{Berline:2004aa}
N.~Berline, E.~Getzler, and M.~Vergne.
\newblock {\em Heat kernels and {D}irac operators}.
\newblock Springer, 2004.

\bibitem{Blackadar:1998yq}
B.~Blackadar.
\newblock {\em K-Theory for Operator Algebras}.
\newblock Cambridge University Press, second edition, 1998.

\bibitem{Brown:1984rx}
L.~G. Brown.
\newblock The universal coefficient theorem for {E}xt and quasidiagonality.
\newblock In {\em Operator algebras and group representations}, volume~I of
  {\em Monogr. Stud. Math. 17}, pages 60--64. Pitman, 1984.

\bibitem{Brown:1973aa}
L.~G. Brown, R.~G. Douglas, and P.~Fillmore.
\newblock Extensions of ${C^*}$-algebras, operators with compact
  self-commutators, and ${K}$-homology.
\newblock {\em Bull. Amer. Math. Soc.}, 79(5):973--978, 1973.

\bibitem{Brown:1977qa}
L.~G. Brown, R.~G. Douglas, and P.~Fillmore.
\newblock Extensions of ${C^*}$-algebras and ${K}$-homology.
\newblock {\em Ann. of Math.}, 105:265--324, 1977.

\bibitem{Carrion:2020aa}
J.~Carri\'{o}n, J.~Gabe, C.~Schafhauser, A.~Tikuisis, and S.~White.
\newblock Classification of $*$-homomorphisms {I}: the simple nuclear case.
\newblock arXiv:2307.06480, 2020.

\bibitem{Dadarlat:2005aa}
M.~Dadarlat.
\newblock On the topology of the {K}asparov groups and its applications.
\newblock {\em J. Funct. Anal.}, 228(2):394--418, 2005.

\bibitem{Dadarlat:1996aa}
M.~Dadarlat and T.~Loring.
\newblock A universal multicoefficient theorem for the {K}asparov groups.
\newblock {\em Duke Math. J.}, 84(2):355--377, 1996.

\bibitem{Dadarlat:2012aa}
M.~Dadarlat and R.~Meyer.
\newblock E-theory for ${C^*}$-algebras over topological spaces.
\newblock {\em J. Funct. Anal.}, 263(1):216--247, 2012.

\bibitem{Harpe:1984aa}
P.~de~la Harpe and G.~Skandalis.
\newblock D\'{e}terminant associ\'{e} \`{a} une trace sur une alg\`{e}bre de
  banach.
\newblock {\em Annales de l'Institut Fourier}, 34(1):241--260, 1984.

\bibitem{Gilkey:1994aa}
P.~Gilkey.
\newblock {\em Invariance Theory, the Heat Equation, and the {A}tiyah-{S}inger
  Index Theorem}.
\newblock CRC Press, second edition, 1994.

\bibitem{Gong:2023aa}
G.~Gong, H.~Lin, and Z.~Niu.
\newblock Homomorphisms into simple $\mathcal{Z}$-stable ${C}^*$-algebras {II}.
\newblock {\em J. Noncommut. Geom.}, 17(3):835--898, 2023.

\bibitem{Higson:2000bs}
N.~Higson and J.~Roe.
\newblock {\em Analytic ${K}$-homology}.
\newblock Oxford University Press, 2000.

\bibitem{Higson:2008qb}
N.~Higson and J.~Roe.
\newblock ${K}$-homology, assembly and rigidity theorems for relative
  eta-invariants.
\newblock {\em Pure Appl. Math. Q.}, 6(2):555--601, 2010.

\bibitem{Kervaire:1963aa}
M.~Kervaire and J.~Milnor.
\newblock Groups of homotopy spheres {I}.
\newblock {\em Ann. of Math.}, 77(3):504--537, 1963.

\bibitem{Keswani:1999aa}
N.~Keswani.
\newblock Geometric ${K}$-homology and controlled paths.
\newblock {\em New York J. Math.}, 5:53--81, 1999.

\bibitem{Matthey:2002aa}
M.~Matthey.
\newblock Mapping the homology of a group to the ${K}$-theory of its
  ${C^*}$-algebra.
\newblock {\em Illinois J. Math}, 46(3):953--977, 2002.

\bibitem{Matthey:2002ab}
M.~Matthey and H.~Oyono-Oyono.
\newblock Algebraic ${K}$-theory in low degree and the {N}ovikov assembly map.
\newblock {\em Proc. London Math. Soc.}, 85(3):43--61, 2002.

\bibitem{Morgan:1974aa}
J.~Morgan and D.~Sullivan.
\newblock The transversality charcteristic class and linking cycles in surgery
  theory.
\newblock {\em Ann. of Math.}, 99(3):463--544, 1974.

\bibitem{Munkholm:2025aa}
M.~Munkholm.
\newblock Uniqueness of the zeta transform in operator ${K}$-theory.
\newblock arXiv:2512.16035v1, 2025.

\bibitem{Roe:1998ad}
J.~Roe.
\newblock {\em Elliptic Operators, topology and asymptotic methods}.
\newblock Chapman and Hall, second edition, 1998.

\bibitem{Rordam:1995aa}
M.~R\o{}rdam.
\newblock Classification of certain infinite simple ${C^*}$-algebras.
\newblock {\em J. Funct. Anal.}, 131:415--458, 1995.

\bibitem{Rordam:2000mz}
M.~R\o{}rdam, F.~Larsen, and N.~Laustsen.
\newblock {\em An Introduction to $K$-Theory for $C^*$-Algebras}.
\newblock Cambridge University Press, 2000.

\bibitem{Rosenberg:1987bh}
J.~Rosenberg and C.~Schochet.
\newblock The {K}\"{u}nneth theorem and the universal coefficient theorem for
  {K}asparov's generalized ${K}$-functor.
\newblock {\em Duke Math. J.}, 55(2):431--474, 1987.

\bibitem{Schochet:1982aa}
C.~Schochet.
\newblock Topological methods for ${C^*}$-algebras {II}: geometric resolutions
  and the {K}\"{u}nneth formula.
\newblock {\em Pacific J. Math.}, 98(2):443--458, 1982.

\bibitem{Steenrod:1951aa}
N.~Steenrod.
\newblock {\em The topology of fibre bundles}.
\newblock Princeton University Press, 1951.

\bibitem{Thomsen:1995aa}
K.~Thomsen.
\newblock Traces, unitary characters, and crossed products by {$\Z$}.
\newblock {\em Publ. RIMS, Kyoto Univ.}, 31:1011--1029, 1995.

\bibitem{Wegge-Olsen:1993kx}
N.~E. Wegge-Olsen.
\newblock {\em K-Theory and $C^*$-Algebras (A Friendly Approach)}.
\newblock Oxford University Press, 1993.

\bibitem{Willett:2024aa}
R.~Willett.
\newblock Conditional representation stability, classification of
  $*$-homomorphisms, and relative eta invariants.
\newblock arXiv:2408.13350, 2024.

\bibitem{Willett:2020aa}
R.~Willett and G.~Yu.
\newblock Controlled ${KK}$-theory and a {M}ilnor exact sequence.
\newblock arXiv:2011.10906. To appear, Doc. Math., 2020.

\end{thebibliography}

\end{document}